\def\O#1{\text{\ding{\the\numexpr#1+171}}}
\declaretheorem[name=Theorem]{Theorem}
\declaretheorem[name=Problem,sibling=Theorem]{Problem}
\declaretheorem[name=Lemma,sibling=Theorem]{Lemma}
\declaretheorem[name=Proposition,sibling=Theorem]{Proposition}
\declaretheorem[name=Fact,sibling=Theorem]{Fact}
\declaretheorem[name=Corollary,sibling=Theorem]{Corollary}
\declaretheorem[name=Example,sibling=Theorem]{Example}
\declaretheorem[name=Remark,sibling=Theorem]{Remark}
\declaretheorem[name=Definition,sibling=Theorem]{Definition}
\declaretheorem[name=Assumption,sibling=Theorem]{Assumption}
\def\dist{\textnormal{dist}}
\def\sgn{\textnormal{sgn}}
\def\conv{\textnormal{Conv}}
\def\spn{\textnormal{span}}
\title{Testing Stationarity Concepts for ReLU Networks: \\ Hardness, Regularity, and Robust Algorithms}
\date{\today}
\author{Lai~Tian \thanks{Department of Systems Engineering and Engineering Management, The Chinese University of Hong Kong, Sha Tin, N.T., Hong Kong SAR. E-mail: \href{mailto:tianlai@se.cuhk.edu.hk}{\tt tianlai@se.cuhk.edu.hk}.} \and 
Anthony~Man-Cho~So \thanks{Department of Systems Engineering and Engineering Management, The Chinese University of Hong Kong, Sha Tin, N.T., Hong Kong SAR. E-mail: \href{mailto:manchoso@se.cuhk.edu.hk}{\tt manchoso@se.cuhk.edu.hk}.}
}
\def\lB{\left\{}
\def\rB{\right\}}
\def\rk{\mathop{\textnormal{rank}}}
\def\St{\mathop{\textnormal{St}}}
\def\argmin{\mathop{\textnormal{argmin}}}
\def\eargmax{\mathop{\textnormal{$\epsilon$--argmax}}}
\def\epsilon{\varepsilon}
\def\diag{\mathop{\textnormal{Diag}}}
\def\geq{\geqslant}
\def\leq{\leqslant}
\renewcommand{\cite}{\citep}
\begin{document}
\maketitle

\begin{abstract}
	We study the computational problem of the stationarity test for the empirical loss of neural networks with ReLU activation functions. Our contributions are:
	\begin{enumerate}
		\item Hardness: We show that checking a certain first-order approximate stationarity concept for a piecewise linear function is  co-NP-hard. This implies that testing a certain stationarity concept for a modern nonsmooth neural network is in general computationally intractable. As a corollary, we prove that testing so-called first-order minimality for functions in abs-normal form is co-NP-complete, which was conjectured by \citet[SIAM J. Optim., vol.~29, p284]{griewank2019relaxing}.		
	\item Regularity: We establish a necessary and sufficient condition for the validity of an equality-type subdifferential chain rule in terms of Clarke, Fr\'echet, and limiting subdifferentials of the empirical loss of two-layer ReLU networks. %
	This new condition is simple and efficiently checkable.
		\item Robust algorithms: We introduce an algorithmic scheme to test near-approximate stationarity in terms of both Clarke and Fr\'echet subdifferentials. Our scheme makes no false positive or false negative error when the tested point is sufficiently close to a stationary one and a certain qualification is satisfied. This is the first practical and robust stationarity test approach for two-layer ReLU networks.
	\end{enumerate}
\end{abstract}

\newpage
\section{Introduction}
The theoretical analysis of ReLU neural network training is challenging from the optimization perspective, though the empirical performance of various ``gradient''-based algorithms is surprisingly good. A key difficulty comes from the entanglement of nonconvexity and nonsmoothness in the objective function of the empirical loss, which causes not only the notion of gradient from classical analysis meaningless, but also the subdifferential set from convex analysis vacuous. Consequently, the study of such a nonconvex nondifferentiable function requires the use of tools from variational analysis \cite{rockafellar2009variational}. 

For a continuously differentiable function $f:\mathbb{R}^d\rightarrow \mathbb{R}$, a point $\bm{x} \in \mathbb{R}^d$ is called stationary (or critical) if $\nabla f(\bm{x}) = \bm{0}$. However, the situation is much more complicated when $f$ is nondifferentiable at $\bm{x}$. 
Indeed, there are many different stationarity concepts (see \Cref{def:stationarity}) for nonsmooth functions \cite{li2020understanding,cui2021modern}.
For general Lipschitz functions, recently, under the oracle complexity framework of \citet{nemirovskij1983problem}, substantial progress has been made on the design of provable algorithms for finding approximately stationary (in the sense of perturbed) points \cite{zhang2020complexity,tian2022finite,davis2021gradient,lin2022gradient,metel2022perturbed,kong2022cost} and also on establishing the hardness of computing such approximate stationary points \cite{kornowski2021oracle,tian2022no,kornowski2022complexity,jordan2022complexity}.

As a complement to these developments, in this paper, we consider the complexity of and robust algorithms for checking whether a given neural network is an (approximately) stationary one with respect to the empirical loss. This is a task already considered by \citet{yun2018efficiently}. We emphasize that ``checking'' and ``finding'' are two very different computational problems. While the co-NP-hardness of checking the local optimality of a given point in smooth nonconvex programming was shown by \citet{murty1987some} in 1987, the complexity of ``finding'' a local minimizer was an open question proposed by \citet{pardalos1992open} since 1992, and is recently settled by \citet{ahmadi2022complexity}.

Given a neural network with smooth elemental components, testing the (approximate) stationarity of a point is simply an application of the classic gradient chain rule. In a modern computational environment, this is usually done by using Algorithmic Differentiation (AD) \cite{griewank2008evaluating} software, e.g., PyTorch and TensorFlow. A natural question that arises is whether testing  the stationarity for a piecewise smooth function (e.g., empirical loss of a ReLU network) is as easy as testing for a smooth one. Surprisingly, we show (in \Cref{thm:hard-general}) that such testing is, in general, computationally intractable.

The difficulty here is due to the failure of an exact (equality-type) subdifferential chain rule. For a general locally Lipschitz function, the calculus rules are only known to hold in the form of set inclusions rather than equalities, except in several special cases (see \Cref{fact:rule}). This prevents one from computing the subdifferential set of the empirical loss with that of elemental components. Thus, to facilitate the tractability of stationarity testing, it is of interest to find out a condition, under which an equality-type chain rule holds,  and the subdifferential set of the empirical loss can be characterized. 
By contrast, given a first-order oracle providing the whole generalized subdifferential set at the queried point in the oracle framework \cite{kornowski2021oracle,tian2022no,kornowski2022complexity,jordan2022complexity}, the stationarity testing task reduces to a simple linear program, which can be solved by interior-point methods in polynomial time. However, in practice, even computing an element in the generalized subdifferential for a nonsmooth function can be highly non-trivial \cite{burke2002approximating,nesterov2005lexicographic,ma2010computation,khan2013evaluating}. Therefore, a condition for the validity of the exact chain rule could be useful for subgradient computation and stationarity testing and analysis.

The most closely related work to ours is the one by \citet{yun2018efficiently}. They considered a two-layer ReLU network and introduced a theoretical algorithm to sequentially check Clarke stationarity (see \Cref{def:stationarity}), Fr\'echet stationarity, and a certain second-order optimality condition. For Fr\'echet stationarity testing, they proposed to verify the nonnegativity of a directional derivative in every possible direction, for which a trivial test in the worst case requires checking exponentially many inequalities. By exploiting polyhedral geometry, they showed that it suffices to check only extreme rays, which can be done in polynomial time. A limitation of the work \cite{yun2018efficiently} (see also the discussion in \cite[Section 5]{yun2018efficiently}) is that the algorithm therein can only perform \emph{exact} stationarity testing (see \Cref{sec:test-exact}). That is to say if the objective function is $x\mapsto |x|$, then the algorithm in \cite{yun2018efficiently} will certify stationarity if and only if $x=0$. However, as pointed out by \citet[Section 5]{yun2018efficiently}, in practice, such an exact nondifferentiable point is almost impossible to reach. Therefore, it is desirable to have a robust stationarity testing algorithm that works for points sufficiently close to a stationary one. In other words, we are interested in testing so-called near-approximate stationarity (see \Cref{def:nas}). We mention that, without exploiting structures in the nonsmooth objective function, such robust testing is impossible in general \cite[Theorem 2.7]{tian2022no}.

\subsection{Our Results and Techniques}
\paragraph{Hardness.} Our first main result shows that checking certain first-order approximate stationarity concept for an unconstrained piecewise differentiable function is co-NP-hard (see \Cref{thm:hard-general}). This implies that testing a certain stationarity concept for a shallow modern convolutional neural network is co-NP-hard (see \Cref{coro:nnhard}). Our reduction is from the 3-satisfiability (3SAT) to a stationarity testing problem. As a corollary, we prove that testing so-called first-order minimality (FOM) for functions in abs-normal form is co-NP-complete (see \Cref{coro:abs-norm-hard}) and give an affirmative answer to a conjecture of \citet[SIAM J. Optim., vol.~29, p284]{griewank2019relaxing}.		

Our other results concern the empirical loss of a two-layer ReLU network, which was also studied by \citet{yun2018efficiently}.
Given the training data $\{(\bm{x}_i, y_i)\}_{i=1}^N \subseteq \mathbb{R}^d\times \mathbb{R}$ with the $\bm{x}=(\tilde{\bm{x}},1)$ parametrization, we first make the following blanket assumptions.
\begin{Assumption}[Blanket assumptions]\label{assu:loss}\
	The loss function $\ell:\mathbb{R}\times \mathbb{R}\rightarrow\mathbb{R}$ is smooth and has locally Lipschitz gradient. For simplicity of notation, we write $\ell_i(\cdot)$ for $\ell(\cdot, y_i)$.  For any $i \in [N]$, we assume $\bm{x}_i \neq \bm{0}$, which is superfluous for the $\bm{x}=(\tilde{\bm{x}},1)$ parametrization.
\end{Assumption}
The empirical loss of a two-layer ReLU neural network with $H$ hidden nodes can be written as
\[
 L(u_1,\bm{w}_1,\dots,u_H,\bm{w}_H) \coloneqq \sum_{i=1}^N \ell_i \left( \sum_{k=1}^H u_k\cdot \max\left\{ \bm{w}_k^\top \bm{x}_i, 0 \right\} \right).
\]

\paragraph{Regularity.} By naïvely abusing the convex subdifferential chain rule for $L$, we consider the following  ``generalized subdifferential'' of the empirical loss $L$ as
\[
\tilde{G}\coloneqq \sum_{i=1}^N \rho_i\cdot\prod_{k=1}^H\lB \max\left\{ \bm{w}_k^\top \bm{x}_i, 0 \right\} \rB\times
\left\{ \begin{array}{rcl}
         \lB u_k\cdot\bm{x}_i\cdot\mathbf{1}_{\bm{w}_k^\top \bm{x}_i > 0} \rB & \mbox{if}
         & \bm{w}_k^\top \bm{x}_i \neq 0, \\
         u_k\cdot\bm{x}_i\cdot [0,1]  & \mbox{if} & \bm{w}_k^\top \bm{x}_i = 0,
                \end{array}\right.
\]
with $\rho_i\coloneqq \ell_i' \left( \sum_{k=1}^H u_k\cdot \max\left\{ \bm{w}_k^\top \bm{x}_i, 0 \right\} \right),\forall i \in [N]$. This ``generalized subdifferential'' is popular in practical computation and theoretical analysis. For example, see \cite[Equation (9)]{wang2019learning}, \cite[Section 3.1]{arora2019fine}, and \cite[Equations (5) and (6)]{safran2022effective}. However, as $L$ is nonconvex and nonsmooth, we can only assert a fuzzy chain rule (see \cite[Section 2.3]{clarke1990optimization}) for the Clarke subdifferential $\partial_C L$ of $L$, which is a set inclusion $\partial_C L(u_1,\bm{w}_1,\dots,u_H,\bm{w}_H) \subseteq \tilde{G}$ rather than an equation.

Our second main result is a necessary and sufficient condition for the validity of a series of equality-type subdifferential chain rules for the empirical loss of this shallow ReLU network. We show that, under this regularity condition, exact chain rules hold for three commonly used generalized subdifferentials, i.e., Clarke (see \Cref{def:subd} and \Cref{thm:reluMain-CR}), limiting (see \Cref{def:subd-l} and \Cref{thm:reluMain-CR-limiting}), and Fr\'echet (see \Cref{def:subd-f} and \Cref{thm:reluMain-CR-frechet}). 
It is notable that while sufficient conditions for the equality-type calculus rules are rather rich in the literature (see \cite[Chapter 10]{rockafellar2009variational}), a necessary condition is rarely seen, let alone an efficiently computable, necessary and sufficient condition in our \Cref{thm:reluMain-CR}.

\paragraph{Robust algorithms.} Our third main result is an algorithmic scheme to test the so-called near-approximate stationarity (see \Cref{def:nas}) in terms of both Clarke and Fr\'echet subdifferentials. We show that, for an approximate stationary point $\bm{x}^*$, any point that is sufficiently close to $\bm{x}^*$ can be certified (with \Cref{alg:robust}) as near-approximate stationary.  Our technique is a new rounding scheme (see \Cref{alg:rounding}) motivated by the notion of active manifold identification \cite{lewis2002active,lemarechal2000u} in the literature. This new rounding scheme is capable of identifying the activation pattern of the target stationary point and finding a nearby point with the same pattern. One notable application of such a near-approximate stationarity test is to obtain a termination criterion for algorithms that only have asymptotic convergence results. 
For example, every limiting point of the sequence generated by the stochastic subgradient method has been shown to be Clarke stationary (see \Cref{def:stationarity}) by \citet[Corollary 5.11]{davis2020stochastic}, %
but it is still unclear when to terminate the algorithm, and how to certify  the obtained point is at least close to some Clarke stationary point, as the norm of any vector in the subdifferential is almost surely lower bounded away from zero during the entire trajectory  (consider running the subgradient method on $x\mapsto |x|$). 	

\paragraph{Notation.} %
Scalars, vectors and matrices are denoted by lowercase letters, boldface lower case letters, and boldface uppercase letters, respectively.
The notation used in this paper is mostly standard:  $\mathbb{B}_\epsilon(\bm{x})\coloneqq\{\bm{v}:\|\bm{v} - \bm{x}\|\leq \epsilon\}$ (we may write $\mathbb{B}_\epsilon^d(\bm{x})$ to emphasize the dimension);  $\dist(\bm{x},S)\coloneqq\inf_{\bm{v}\in S} \|\bm{v}-\bm{x}\|$ for a closed set $S$, which is defined as $+\infty$ if the set $S=\emptyset$; $\conv(S)$ denotes the convex hull of the set $S$; the vector $\bm{e}_i$ denotes the $i$-th column of identity matrix $\bm{I}$;  $\mathbb{R}_+\coloneqq \{x \in \mathbb{R}: x \geq 0\}$; $\pi_i$ denotes the project to the $i$-th argument operator; i.e., $\pi_i \left(\prod_{j=1}^n S_j\right) \coloneqq S_i$ for sets $\{S_i\}_{i=1}^n$; the extended-real $\overline{\mathbb{R}}$ is defined as $\mathbb{R}\cup\{-\infty,+\infty\}$; the addition of two sets is always understood in the sense of Minkowski; $\overline{\mathbb{Z}}\coloneqq \mathbb{Z}\cup \{-\infty, \infty\}$; $[m]\coloneqq \{1,\dots,m\}$ for any integer $m \geq 1$.

\paragraph{Organization.} We introduce the background on generalized differentiation theory and formal definitions of stationarity concepts in Section~\ref{sec:prel}. 
Then, in Section~\ref{sec:hardness}, we present our main hardness results. The necessary and sufficient condition of the validity of chain rule in terms of various subdifferential constructions is presented in \Cref{sec:regularity}. We discuss the robust algorithms to test near-approximate stationarity concepts in \Cref{sec:test}. All proofs are deferred to the Appendices.

\section{Preliminaries}\label{sec:prel}

The following construction of subdifferential by \citet[Theorem 2.5.1]{clarke1990optimization} is classic.
\begin{Definition}[Clarke subdifferential]\label{def:subd} Given a point $\bm{x}$, the Clarke subdifferential of a locally Lipschitz function $f$ at $\bm{x}$ is defined by
	\[
	\partial_C f(\bm{x}) := \conv\big\{\bm{s}:\exists \bm{x}^\prime\! \rightarrow\! \bm{x}, \nabla f(\bm{x}^\prime) \textnormal{ exists}, \nabla f(\bm{x}^\prime)\!\rightarrow\! \bm{s}\big\}.
	\]
\end{Definition}
For a locally Lipschitz function, the Clarke subdifferential is always nonempty, convex, and compact \cite[Proposition 2.1.2(a)]{clarke1990optimization}. The following set generated by a directional derivative $f'$ is known as the Fr\'echet subdifferential of $f$ \cite[Exercise 8.4]{rockafellar2009variational}.
\begin{Definition}[Fr\'echet subdifferential]\label{def:subd-f}
	Given a point $\bm{x}$, the Fr\'echet subdifferential of a locally Lipschitz and directional differentiable function $f$ at $\bm{x}$ is defined by
	\[
	\widehat{\partial} f (\bm{x}) := \big\{\bm{s}: \bm{s}^\top \bm{d} \leq f'(\bm{x};\bm{d}) \text{ for all } \bm{d}\big\}.
	\]
\end{Definition}
 The set-valued  mapping $\widehat\partial f$ of Fr\'echet subdifferential of $f$ is not outer semicontinuous (see \cite[Definition 5.4]{rockafellar2009variational}), which means that given $\bm{x}_\nu \rightarrow \bm{x}, \bm{g}_\nu \rightarrow \bm{g}$ with $\bm{g}_\nu \in \widehat{\partial} f(\bm{x}_\nu)$, we cannot assert $\bm{g} \in \widehat{\partial} f(\bm{x})$. The following limiting subdifferential (or the Mordukhovich subdifferential) \cite[Definition 8.3(b)]{rockafellar2009variational} is more robust for analysis.
 \begin{Definition}[Limiting subdifferential]\label{def:subd-l}
	Given a point $\bm{x}$, the limiting subdifferential of a locally Lipschitz and directional differentiable function $f$ at $\bm{x}$ is defined by
	\[
	\partial f (\bm{x}) := \limsup_{\bm{x}^\prime \rightarrow \bm{x}} \widehat{\partial} f (\bm{x}^\prime),
	\]
	where the outer limit is taken in the sense of Kuratowski (see, e.g., \cite[p152, Equation 5(1)]{rockafellar2009variational}).
\end{Definition}
In the following  result, we record a generalized Fermat's rule for optimality conditions and the relationship among the aforementioned three subdifferentials.
\begin{Fact}[{\citet[Theorem 8.6, 8.49, 10.1]{rockafellar2009variational}}]\label{fact:relation-fermat}
Given a locally Lipschitz function $f:\mathbb{R}^d\rightarrow\mathbb{R}$ and a point $\bm{x} \in \mathbb{R}^d$, then we have
$
 \widehat{\partial} f(\bm{x}) \subseteq \partial f(\bm{x}) \subseteq \partial_C f(\bm{x}).
$
If the point $\bm{x}$ is a local minimizer of the function $f$, then it holds that  $\bm{0} \in \widehat{\partial} f(\bm{x})$.
\end{Fact}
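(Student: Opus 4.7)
The plan is to verify the three claims in the order they are stated: the two inclusions in the chain, and then Fermat's rule.

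For $\widehat\partial f(\bm{x}) \subseteq \partial f(\bm{x})$, the argument is essentially definitional. By \Cref{def:subd-l}, $\partial f(\bm{x})$ is the Kuratowski outer limit of $\widehat\partial f(\bm{x}')$ as $\bm{x}' \to \bm{x}$; taking the constant sequence $\bm{x}'_\nu \equiv \bm{x}$ shows that every element of $\widehat\partial f(\bm{x})$ lies in $\partial f(\bm{x})$.

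For $\partial f(\bm{x}) \subseteq \partial_C f(\bm{x})$, I would proceed in two steps. First, at an arbitrary base point $\bm{y}$, establish $\widehat\partial f(\bm{y}) \subseteq \partial_C f(\bm{y})$. Introduce Clarke's generalized directional derivative
\[
f^\circ(\bm{y};\bm{d}) := \limsup_{\bm{z}\to\bm{y},\,t\downarrow 0} \frac{f(\bm{z}+t\bm{d}) - f(\bm{z})}{t},
\]
which dominates the one-sided derivative $f'(\bm{y};\bm{d})$ whenever the latter exists, simply by comparing the defining limits. Invoking the dual description $\partial_C f(\bm{y}) = \{\bm{s} : \bm{s}^\top \bm{d} \leq f^\circ(\bm{y};\bm{d}) \text{ for all } \bm{d}\}$ (Clarke 1990, Proposition 2.1.2(b)), any $\bm{s} \in \widehat\partial f(\bm{y})$ satisfies $\bm{s}^\top \bm{d} \leq f'(\bm{y};\bm{d}) \leq f^\circ(\bm{y};\bm{d})$ and therefore belongs to $\partial_C f(\bm{y})$. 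Second, given $\bm{x}_\nu \to \bm{x}$ and $\bm{g}_\nu \in \widehat\partial f(\bm{x}_\nu)$ with $\bm{g}_\nu \to \bm{g}$, the preceding inclusion gives $\bm{g}_\nu \in \partial_C f(\bm{x}_\nu)$, and outer semicontinuity of $\partial_C f$ (Clarke 1990, Proposition 2.1.5) yields $\bm{g} \in \partial_C f(\bm{x})$.

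For Fermat's rule, suppose $\bm{x}$ is a local minimizer. For any direction $\bm{d}$ and all sufficiently small $t > 0$, we have $f(\bm{x}+t\bm{d}) \geq f(\bm{x})$; dividing by $t$ and letting $t \downarrow 0$ yields $f'(\bm{x};\bm{d}) \geq 0$. Hence $\bm{0}^\top \bm{d} = 0 \leq f'(\bm{x};\bm{d})$ for every $\bm{d}$, which by \Cref{def:subd-f} means $\bm{0} \in \widehat\partial f(\bm{x})$. The main obstacle in the entire argument is the dual characterization of $\partial_C f$ used above: its justification ultimately rests on Rademacher's theorem to ensure that the convex hull in \Cref{def:subd} is nonempty and that its support function is precisely $f^\circ$. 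Granted that input, the rest reduces to bookkeeping with the definitions.
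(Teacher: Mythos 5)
Your argument is correct, but note that the paper does not actually prove this Fact---it is recorded as a citation to \citet[Theorems 8.6, 8.49, 10.1]{rockafellar2009variational}---so what you have done is reconstruct the standard textbook argument from scratch. The reconstruction is sound: the first inclusion follows from taking the constant sequence in the Kuratowski outer limit of \Cref{def:subd-l}; the second follows from $f'(\bm{y};\bm{d})\leq f^\circ(\bm{y};\bm{d})$ together with the support-function description of $\partial_C f$ and the closedness of the graph of $\partial_C f$; and Fermat's rule is the usual one-line difference-quotient argument. Two small points worth flagging. First, you correctly identify the one nontrivial input, namely that the gradient-limit construction in \Cref{def:subd} has $f^\circ$ as its support function (Clarke's Theorem 2.5.1, resting on Rademacher); without that equivalence the dual description you invoke is not available from \Cref{def:subd} alone. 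Second, your use of $f'(\bm{y};\bm{d})$ at points $\bm{y}$ near $\bm{x}$ tacitly requires directional differentiability there, which is built into the paper's \Cref{def:subd-f} and \Cref{def:subd-l} but not into the hypothesis ``locally Lipschitz'' of the Fact as stated; for a fully general locally Lipschitz $f$ one would replace $f'$ by the lower subderivative $df(\bm{x})(\bm{d})$ as in \citet[Exercise 8.4]{rockafellar2009variational}, and the same argument goes through since $df(\bm{x})(\bm{d})\leq f^\circ(\bm{x};\bm{d})$ still holds. An alternative, slightly shorter route to the second inclusion is to quote $\partial_C f(\bm{x})=\conv\partial f(\bm{x})$ directly from \citet[Theorem 8.49]{rockafellar2009variational}, but your outer-semicontinuity argument is equally valid.
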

We are now ready to state the definitions of various stationarity concepts.
\begin{Definition}[Stationarity concepts]\label{def:stationarity}
	Given a locally Lipschitz function $f:\mathbb{R}^d\rightarrow\mathbb{R}$, we say that the point $\bm{x}\in\mathbb{R}^d$ is an
	\begin{itemize}
		\item $\epsilon$-Clarke stationary point if $\dist\big(\bm{0}, \partial_C f(\bm{x})\big) \leq \epsilon$;
		\item $\epsilon$-Fr\'echet stationary point if $\dist\big(\bm{0}, \widehat{\partial} f(\bm{x})\big) \leq \epsilon$;
		\item $\epsilon$-limiting stationary point if $\dist\big(\bm{0}, \partial f(\bm{x})\big) \leq \epsilon$.
	\end{itemize}
\end{Definition}

The following Clarke regularity for locally Lipschitz and directional differentiable functions is a classic notion related to the validity of various subdifferential calculus rules; see \cite[Definition 2.3.4]{clarke1990optimization} and \cite[Corollary 8.11]{rockafellar2009variational}. 
\begin{Definition}[Clarke regularity]\label{def:clarke-regular}
For a locally Lipschitz directional differentiable function $f:\mathbb{R}^d\rightarrow\mathbb{R}$ and a point $\bm{x}$, one has $f$ is Clarke regular at $\bm{x}$ if $\partial_C f(\bm{x}) = \widehat{\partial} f(\bm{x})$.
\end{Definition}

We record some basic equality-type calculus rules for Clarke subdifferential as follows; see {\cite[Proposition 2.3.3, Theorem 2.3.10]{clarke1990optimization}, and \cite[Proposition 2.5]{rockafellar1985extensions}}. We refer the reader to \cite[Chapter 10]{rockafellar2009variational} for similar calculus rules for Fr\'echet and limiting subdifferentials.
\begin{Fact}[Calculus rules]\label{fact:rule} Let $f:\mathbb{R}^d\rightarrow \mathbb{R},g:\mathbb{R}^d\rightarrow \mathbb{R}$ be two locally Lipschitz functions.
\begin{itemize}
	\item If $f$ is strictly differentiable at $\bm{x}$, then $\partial_C (f+g)(\bm{x}) = \nabla f(\bm{x}) + \partial_C g(\bm{x})$;
	\item If $h(\bm{x},\bm{y}) = f(\bm{x}) + g(\bm{y})$, then $\partial_C h(\bm{x},\bm{y}) = \partial_C f(\bm{x}) \times \partial_C g(\bm{y})$;
	\item Given a strictly differentiable mapping $G:\mathbb{R}^n\rightarrow\mathbb{R}^d$ and a point $\bm{y}\in\mathbb{R}^n$, if the function $f$ (or $-f$) is Clarke regular at $G(\bm{y})$, then $f\circ G$ (or $-f\circ G$) is Clarke regular at $\bm{y}$ and $\partial_C [f\circ G](\bm{y}) = (JG(\bm{y}))^\top \partial_C f(G(\bm{y}))$, where $JG$ is the Jacobian of mapping $G$. The equality also holds when $JG$ is surjective.
\end{itemize}
\end{Fact}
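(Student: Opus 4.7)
The plan is to work through the support-function characterization of the Clarke subdifferential: for any locally Lipschitz $f$, the Clarke generalized directional derivative $f^\circ(\bm{x};\bm{d}) := \limsup_{\bm{y}\to\bm{x},\,t\downarrow 0} [f(\bm{y}+t\bm{d})-f(\bm{y})]/t$ serves as the support function of the convex compact set $\partial_C f(\bm{x})$. Since convex compact sets are determined by their support functions, each of the three claimed set identities reduces to a corresponding identity for generalized directional derivatives, which is easier to manipulate analytically.

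For the first rule, I would use that strict differentiability of $f$ at $\bm{x}$ collapses the $\limsup$ into a linear map, namely $f^\circ(\bm{x};\bm{d}) = \nabla f(\bm{x})^\top \bm{d}$, and moreover makes the double-limit defining $(f+g)^\circ$ split additively, giving $(f+g)^\circ(\bm{x};\bm{d}) = \nabla f(\bm{x})^\top\bm{d} + g^\circ(\bm{x};\bm{d})$. For the second rule, the separability $h(\bm{x},\bm{y}) = f(\bm{x}) + g(\bm{y})$ lets me compute the $\limsup$ in each block independently, yielding $h^\circ((\bm{x},\bm{y});(\bm{d}_1,\bm{d}_2)) = f^\circ(\bm{x};\bm{d}_1) + g^\circ(\bm{y};\bm{d}_2)$, which is exactly the support function of $\partial_C f(\bm{x}) \times \partial_C g(\bm{y})$.

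For the chain rule I would proceed in two steps. First, a first-order Taylor expansion of the strictly differentiable map $G$ gives $G(\bm{y}+t\bm{d}) - G(\bm{y}) = t\,JG(\bm{y})\bm{d} + o(t)$ uniformly on a neighborhood, so the local Lipschitz property of $f$ yields the universal inclusion $(f\circ G)^\circ(\bm{y};\bm{d}) \leq f^\circ(G(\bm{y});JG(\bm{y})\bm{d})$, hence $\partial_C[f\circ G](\bm{y}) \subseteq (JG(\bm{y}))^\top \partial_C f(G(\bm{y}))$. Second, to close the gap I would invoke Clarke regularity of $f$ at $G(\bm{y})$, which forces $f^\circ = f'$; then the classical chain rule for ordinary directional derivatives, $(f\circ G)'(\bm{y};\bm{d}) = f'(G(\bm{y});JG(\bm{y})\bm{d})$, together with the trivial bound $(f\circ G)' \leq (f\circ G)^\circ$, pinches all inequalities into equalities, simultaneously establishing the claimed subdifferential identity and Clarke regularity of $f\circ G$. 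The $-f$ variant follows from $\partial_C(-f) = -\partial_C f$. For the surjective case, regularity is not needed: a right inverse of $JG(\bm{y})$ (via the implicit function theorem) pulls every test perturbation in the image space back to the domain, realizing the same limsup and giving the reverse inclusion directly.

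The main obstacle is clearly the last step: without either Clarke regularity of $f$ or surjectivity of $JG$, one cannot recover every relevant direction in the image of $G$ from directions in the domain, and the inclusion $\partial_C[f\circ G] \subseteq (JG)^\top \partial_C f\circ G$ can be strict. The delicate bookkeeping is the uniformity in the Taylor remainder when passing to the limsup, which is precisely where \emph{strict} differentiability of $G$ (as opposed to mere differentiability) is indispensable.
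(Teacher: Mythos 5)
The paper does not prove this statement at all: it is recorded as a \emph{Fact} and attributed to \citet[Proposition 2.3.3, Theorem 2.3.10]{clarke1990optimization} and \citet[Proposition 2.5]{rockafellar1985extensions}, so there is no internal proof to compare against. Your support-function strategy is nonetheless the textbook route for the first and third items, and those two arguments are essentially correct. For the first item, strict differentiability makes the $f$-part of the difference quotient a genuine (full) limit, so the $\limsup$ of the sum does split, exactly as you say. For the chain rule, the pinching argument $f'\bigl(G(\bm{y});JG(\bm{y})\bm{d}\bigr)=(f\circ G)'(\bm{y};\bm{d})\leq (f\circ G)^\circ(\bm{y};\bm{d})\leq f^\circ\bigl(G(\bm{y});JG(\bm{y})\bm{d}\bigr)$ is the standard proof and simultaneously yields regularity of $f\circ G$. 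One small repair: in the surjective case, $G$ is only assumed strictly differentiable \emph{at the point} $\bm{y}$, which is weaker than $C^1$ near $\bm{y}$, so the classical implicit function theorem is not available; you need the Lyusternik--Graves open mapping theorem for strictly differentiable maps (or Clarke's hypothesis that $G$ maps neighborhoods of $\bm{y}$ onto dense subsets of neighborhoods of $G(\bm{y})$) to pull perturbations back.

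The second item is where your argument has a genuine gap. Writing $h^\circ\bigl((\bm{x},\bm{y});(\bm{d}_1,\bm{d}_2)\bigr)=\limsup\bigl[\Delta_t f(\bm{x}')(\bm{d}_1)+\Delta_t g(\bm{y}')(\bm{d}_2)\bigr]$, the base points $\bm{x}',\bm{y}'$ are indeed independent, but the scalar $t$ is \emph{shared} between the two blocks. The inequality $\leq$ is free, but for $\geq$ you must exhibit a single sequence $(\bm{x}_\nu,\bm{y}_\nu,t_\nu)$ along which both difference quotients approach their respective $\limsup$s, and a sequence $t_\nu$ that is good for $f$ need not be good for $g$; ``compute the limsup in each block independently'' does not justify this. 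The standard fix (and, in effect, what \citet[Proposition 2.5]{rockafellar1985extensions} does) is to abandon difference quotients here and use the gradient characterization $\partial_C h=\conv\partial_B h$ from \Cref{def:subd}: separability gives that $h$ is differentiable at $(\bm{x}',\bm{y}')$ exactly when $f$ is differentiable at $\bm{x}'$ and $g$ at $\bm{y}'$, with $\nabla h(\bm{x}',\bm{y}')=(\nabla f(\bm{x}'),\nabla g(\bm{y}'))$, so the two blocks genuinely decouple at the level of Bouligand subdifferentials, $\partial_B h(\bm{x},\bm{y})=\partial_B f(\bm{x})\times\partial_B g(\bm{y})$, and taking convex hulls finishes the proof. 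Equivalently, you may keep the support-function framework but first rewrite $f^\circ(\bm{x};\bm{d}_1)$ as $\limsup_{\bm{z}\to\bm{x}}\langle\nabla f(\bm{z}),\bm{d}_1\rangle$ over differentiability points; either way, some appeal to the gradient formula is needed at exactly the step you pass over.
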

\begin{Remark}
The sum rule is a special case of the chain rule, which does not hold for Lipschitz functions trivially. For example, consider $\partial_C[|\cdot| - |\cdot|](0) = \{0\} \subsetneq \partial_C[|\cdot|](0) + (-\partial_C[|\cdot|](0))=[-2,2]$.
The empirical loss of a ReLU network is in general not Clarke regular. To see this, let $f(x,y)=\max\{x,0\}-\max\{y,0\}$. It is easy to verify neither $f$ nor $-f$ is Clarke regular. Another remark here is on the notion of partial subdifferentiation; see \cite[Corollary 10.11]{rockafellar2009variational} and \cite[p48]{clarke1990optimization}. 
 In general, we cannot say much about the relationship between $\partial f(\bm{x},\bm{y})$ and $\partial_{\bm{x}} f(\bm{x},\bm{y}) \times \partial_{\bm{y}} f(\bm{x},\bm{y})$ (see \cite[Example 2.5.2]{clarke1990optimization}), except the following inclusion \cite[Proposition 2.3.16]{clarke1990optimization}:
$
\partial_{\bm{x}} f(\bm{x},\bm{y}) \times \partial_{\bm{y}} f(\bm{x},\bm{y}) \subseteq \pi_1 \partial f(\bm{x},\bm{y}) \times \pi_2 \partial f(\bm{x},\bm{y}).
$
\end{Remark}

\section{Hardness of Stationarity Testing}\label{sec:hardness}

For smooth nonconvex programming, co-NP-hardness has been shown for local optimality testing \cite[Theorem 2]{murty1987some} and second-order sufficient condition testing \cite[Theorem 4]{murty1987some}. However, in the nonsmooth case,  we show that checking a first-order necessary condition approximately in terms of certain subdifferential is already co-NP-hard. 

\begin{Theorem}[Testing of piecewise linear functions]\label{thm:hard-general} 
Given a $3\sqrt{d}$-Lipschitz piecewise linear function $f:\mathbb{R}^d\rightarrow \mathbb{R}$ in the form of max–min representation\footnote{Any piecewise linear function $f:\mathbb{R}^d\rightarrow\mathbb{R}$ can be written using a max-min representation as $
f(\bm{x}) = \max_{1\leq i \leq l} \min_{j \in M_i} \bm{a}_j^\top \bm{x} + b_j,
$ where $M_i\subseteq [m]$ is a finite index set; see \cite[Proposition 2.2.2]{scholtes2012introduction}. The input data are $d\in\mathbb{N}, m \in \mathbb{N}, l \in \mathbb{N}, \{(\bm{a}_j, b_j)\}_{j=1}^m,$ and  $\{M_i\}_{i=1}^l.$} with integer data. For any $\eta \in (d, +\infty]\cap \overline{\mathbb{Z}}$,
	checking whether the point $\bm{0}\in\mathbb{Z}^d$ satisfying $\dist\big(\bm{0}, \widehat\partial f(\bm{0})\big) \leq \nicefrac{1}{\sqrt{\eta}}$ is co-NP-hard, and checking whether $\bm{0} \in \widehat\partial f(\bm{0})$ is strongly co-NP-hard.
\end{Theorem}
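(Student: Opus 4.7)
The plan is a polynomial-time many-one reduction from 3\textsc{Sat} to the complement of the testing problem. Given a 3\textsc{Sat} instance $\varphi$ with $n$ variables and $m$ clauses, I set $d := n$ and construct a piecewise linear function $f_\varphi:\mathbb{R}^n\to\mathbb{R}$ in max--min form as follows. For each literal $l_{i,j}$ of clause $C_i$ over variable $x_{k(i,j)}$, let $\bm{a}_{i,j} := -\bm{e}_{k(i,j)}$ if $l_{i,j}$ is a positive literal and $\bm{a}_{i,j} := +\bm{e}_{k(i,j)}$ if $l_{i,j}$ is negated; then put
\[
f_\varphi(\bm{x}) := \max_{i\in[m]}\,\min_{j\in\{1,2,3\}}\bm{a}_{i,j}^\top \bm{x}.
\]
All coefficients lie in $\{-1,0,+1\}$, so the data are integer and $f_\varphi$ is $1$-Lipschitz (well within the $3\sqrt{d}$ bound). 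Because $f_\varphi$ is positively $1$-homogeneous with $f_\varphi(\bm{0})=0$, we get $f_\varphi'(\bm{0};\bm{d}) = f_\varphi(\bm{d})$, so by \Cref{def:subd-f} the Fr\'echet subdifferential at the origin is the polyhedron $\widehat\partial f_\varphi(\bm{0}) = \{\bm{s}\in\mathbb{R}^n : \bm{s}^\top \bm{d} \leq f_\varphi(\bm{d})\text{ for all }\bm{d}\}$.

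The key combinatorial observation is that $\bm{a}_{i,j}^\top\bm{d}<0$ holds exactly when literal $l_{i,j}$ is ``strictly satisfied'' by the sign pattern of $\bm{d}$ (a positive literal $x_k$ wants $d_k>0$; a negative literal $\neg x_k$ wants $d_k<0$). If $\varphi$ is unsatisfiable, then for every $\bm{d}$ the assignment $\sigma(x_k) := \mathbf{1}_{\{d_k > 0\}}$ falsifies some clause $C_i$, and for each of its three literals one checks $\bm{a}_{i,j}^\top \bm{d} \geq 0$ (regardless of how $d_k=0$ ties are broken); hence $f_\varphi(\bm{d})\geq 0$ everywhere, so $\bm{0}\in\widehat\partial f_\varphi(\bm{0})$ and $\dist(\bm{0},\widehat\partial f_\varphi(\bm{0})) = 0 \leq 1/\sqrt{\eta}$. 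Conversely, if a satisfying assignment $\sigma^*$ exists, then the unit direction $\bm{d}^*\in\mathbb{R}^n$ with $d^*_k := (2\sigma^*(k)-1)/\sqrt{n}$ yields $\bm{a}_{i,j}^\top\bm{d}^* = -1/\sqrt{n}$ for the literal satisfying $C_i$, so $f_\varphi(\bm{d}^*)\leq -1/\sqrt{n}$; hence every $\bm{s}\in\widehat\partial f_\varphi(\bm{0})$ satisfies $\bm{s}^\top \bm{d}^* \leq -1/\sqrt{n}$, which by Cauchy--Schwarz forces $\|\bm{s}\|\geq 1/\sqrt{n}$. Since $\eta > d = n$, this gives $\dist(\bm{0},\widehat\partial f_\varphi(\bm{0})) \geq 1/\sqrt{n} > 1/\sqrt{\eta}$.

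Thus the approximate test distinguishes satisfiable from unsatisfiable instances, proving co-NP-hardness for every $\eta \in (d,\infty]\cap\overline{\mathbb{Z}}$. The exact test $\bm{0}\in\widehat\partial f_\varphi(\bm{0})$ corresponds to $\eta=+\infty$, and its strong co-NP-hardness is inherited from the strong NP-completeness of 3\textsc{Sat}, since the reduction produces only $\pm 1$ coefficients (hence polynomial magnitude even under unary encoding). The main obstacle is engineering the quantitative descent gap: the $1/\sqrt{n}$ scale arising from unit-norm normalization of a $\pm 1$ assignment must strictly exceed the threshold $1/\sqrt{\eta}$, which is exactly why the hypothesis $\eta > d$ appears in the statement. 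Everything else---the homogeneity trick collapsing $f_\varphi'(\bm{0};\cdot)$ to $f_\varphi$ itself, the encoding of clauses as $\min$ gates, and the handling of zero coordinates in the unsatisfiable direction---is routine once this gap is secured.
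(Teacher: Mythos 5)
Your reduction is correct and follows essentially the same route as the paper: literals are encoded as signed standard basis vectors, positive $1$-homogeneity identifies $\widehat\partial f(\bm{0})$ with $\{\bm{s}:\bm{s}^\top\bm{d}\le f(\bm{d})\ \forall\bm{d}\}$, and the $\nicefrac{1}{\sqrt{n}}$-versus-$\nicefrac{1}{\sqrt{\eta}}$ gap obtained from a unit-norm $\pm\nicefrac{1}{\sqrt{n}}$ assignment direction separates satisfiable from unsatisfiable instances, with strong hardness following from the $\pm1$ coefficients. The only deviation is the clause gadget---you use $\min_j \bm{a}_{i,j}^\top\bm{x}$ natively in max--min form, whereas the paper uses $-\sum_{j}\max\{\bm{y}_{i,j}^\top\bm{x},0\}$ (then converted to max--min form), a choice that additionally lets the same construction be realized as a ReLU/max-pooling network for \Cref{coro:nnhard}; the two gadgets have identical sign behavior, so your argument goes through.
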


We compare \Cref{thm:hard-general} with the classic hardness result of \citet{murty1987some}. In \cite{murty1987some}, checking the local optimality of a simply constrained indefinite quadratic problem \cite[Problem 1]{murty1987some}  and of an unconstraint quartic polynomial objective \cite[Problem 11]{murty1987some} are both co-NP-complete. However, these hardness results are inapplicable for checking first-order necessary conditions. In fact, for any hard construction $f:\mathbb{R}^n \rightarrow \overline{\mathbb{R}}$ in \cite{murty1987some} and a given point $\bm{x} \in \mathbb{Q}^n$, testing $\bm{0}\in\widehat{\partial}f(\bm{x})$ can be done in polynomial time with respect to the input size. In \Cref{thm:hard-general}, we show that for a class of simple unconstrained piecewise differentiable functions, even an approximate test of the first-order necessary condition $\bm{0}\in\widehat{\partial}f(\bm{x})$ for a certain point $\bm{x}$ is already computationally intractable. 

Nonsmooth functions in real-world applications usually contain structures that can be exploited in theoretical analysis and algorithmic design. A subclass of piecewise differentiable functions, termed $C^d_{\textnormal{abs}}$ or functions representable in abs-normal form, and defined as the composition of smooth functions and the absolute value function, is introduced by \citet{griewank2013stable}; see \Cref{sec:abs-norm-form} for a brief introduction and \cite[Definition 2.1]{griewank2019relaxing} for details.
An important corollary of our hard construction concerns the complexity of checking an optimality condition for functions in $C^d_{\textnormal{abs}}$. The following result gives an affirmative answer to a conjecture of \citet[p284]{griewank2019relaxing}:

\begin{Corollary}[Testing of abs-normal form]\label{coro:abs-norm-hard}
	Testing first order minimality (FOM) for a piecewise differentiable  function given in the abs-normal form is co-NP-complete. 
\end{Corollary}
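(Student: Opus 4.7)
The plan is to establish co-NP-completeness by showing (i) co-NP-hardness via a reduction from \Cref{thm:hard-general}, and (ii) membership in co-NP via a polynomial-size certificate for violations of FOM.

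For the hardness direction, I would start from the strongly co-NP-hard problem of deciding whether $\bm{0}\in\widehat\partial f(\bm{0})$ for a piecewise linear $f$ given in max-min representation, which is the second half of \Cref{thm:hard-general}. The key observation is that binary max and min can each be written using a single absolute value via $\max\{a,b\}=(a+b+|a-b|)/2$ and $\min\{a,b\}=(a+b-|a-b|)/2$. Iterating over the representation $f(\bm{x})=\max_{1\leq i\leq l}\min_{j\in M_i}\bm{a}_j^\top\bm{x}+b_j$ yields an equivalent abs-normal representation whose size is polynomial in $l+\sum_i|M_i|$ and in the bitsize of the original coefficients; the switching-variable vector and the incidence/Jacobian matrices required by the abs-normal form can be written down explicitly by a single traversal of the max-min tree. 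The reduction therefore runs in polynomial time and produces a genuine $C^d_{\textnormal{abs}}$ instance.

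For the co-NP membership, a point $\bm{x}$ fails FOM iff there is a descent direction $\bm{d}$ with $f'(\bm{x};\bm{d})<0$. I would argue that such a $\bm{d}$, when it exists, admits a representative with rational coordinates of polynomial bitsize: since $f$ built from an abs-normal form with integer data is piecewise linear, the arrangement of hyperplanes $\{s_k(\bm{x}+t\bm{d})=0\}$ cuts a neighborhood of $\bm{x}$ into polyhedral cones whose defining inequalities have polynomially bounded integer coefficients, and a Cramer's-rule argument yields a direction in the relative interior of any one of these cones with polynomial bitsize. Given such a candidate $\bm{d}$, the value $f'(\bm{x};\bm{d})$ can be evaluated exactly in polynomial time by a forward sweep through the abs-normal representation, resolving each $|s_k|$ at a zero switching value by the sign of the corresponding directional derivative; the resulting rational number is then compared to zero in exact arithmetic.

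The main obstacle is verifying that the abs-normal FOM condition of \cite{griewank2019relaxing} coincides with $\bm{0}\in\widehat\partial f(\bm{0})$ on the piecewise linear instances produced by the reduction, rather than being a strictly weaker or stronger condition tailored to the abs-normal structure. I would handle this by invoking the explicit characterization of FOM in \cite{griewank2019relaxing} as nonnegativity of the piecewise linear directional derivative functional $\bm{d}\mapsto f'(\bm{x};\bm{d})$; since this functional is positively homogeneous and coincides with the classical directional derivative on our reduction, its nonnegativity in all directions is exactly $\bm{0}\in\widehat\partial f(\bm{x})$ by \Cref{def:subd-f}. Combining this equivalence with the hardness reduction above and the co-NP membership argument completes the proof.
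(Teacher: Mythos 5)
Your proposal is correct in outline, and the hardness half is essentially the paper's argument: both convert the hard piecewise-linear instance into abs-normal form in polynomial time via the identities $\max\{a,b\}=(a+b+|a-b|)/2$ and $\min\{a,b\}=(a+b-|a-b|)/2$ (the paper does this concretely for $f_{\textsf{PLT}}$ from \Cref{prob:plt}, introducing $3n$ switching variables for the ReLUs and $n-1$ more for the nested outer max), and both then invoke the equivalence, for a positively homogeneous piecewise linear function, between FOM and $\bm{0}\in\widehat{\partial}f(\bm{0})$, so that \Cref{lem:plt-np-hard} transfers. Where you genuinely diverge is the co-NP membership argument. The paper works directly with the combinatorial formulation of non-FOM in \Cref{prob:anft}: the certificate is a definite signature vector $\bm{\sigma}\in\{-1,1\}^s$, and verification amounts to computing $\bm{a}^\top+\bm{b}^\top(\diag(\bm{\sigma})-\bm{L})^{-1}\bm{Z}$ and checking infeasibility of a linear system, which is in P. Your certificate is instead a primal descent direction $\bm{d}$ of polynomial bitsize, verified by exact forward evaluation of the piecewise linearization. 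Both are valid; the paper's route is shorter because it never needs to argue about bitsizes of directions --- the signature vector is trivially of polynomial size and LP feasibility handles the rest --- whereas your route requires the Cramer's-rule bound on the cone description (which does go through here since $\bm{I}-\bm{L}\bm{\Sigma}$ is unitriangular, so its inverse has polynomially bounded entries) and, more importantly, requires you to justify that the object whose sign you evaluate is the abs-normal piecewise linearization $\Delta f(\bm{x};\cdot)$ rather than the classical directional derivative of $f$ for \emph{general} (not necessarily piecewise linear) abs-normal instances; these coincide as positively homogeneous piecewise linear functions of $\bm{d}$, but that identification is exactly the content of the FOM characterization in \citet{griewank2019relaxing} and should be stated explicitly. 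Your approach buys a more intrinsic, representation-independent certificate; the paper's buys a two-line membership proof tailored to the abs-normal data $(\bm{a},\bm{b},\bm{Z},\bm{L})$.
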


Now, we report another notable corollary about the complexity of testing a certain stationarity concept for the empirical loss of a modern convolutional neural network.
\begin{Corollary}[Testing of loss of nonsmooth networks]\label{coro:nnhard}
Let $f:\mathbb{R}^d\rightarrow\mathbb{R}$ be the empirical loss function of a shallow neural network with ReLU activation function, max-pooling operator, and convolution operator. Suppose the width of the first layer is $m$.
Then, for any $\eta \in (m, +\infty]\cap \overline{\mathbb{Z}}$,
testing the $\nicefrac{1}{\sqrt{\eta}}$-Fr\'echet stationarity $\dist\big(\bm{0}, \widehat\partial f(\bm{\theta})\big) \leq \nicefrac{1}{\sqrt{\eta}}$ for a certain $\bm{\theta}\in\mathbb{Q}^d$ is co-NP-hard, and testing $\bm{0}\in \widehat\partial f(\bm{\theta})$ for $\bm{\theta}$ is strongly co-NP-hard.
\end{Corollary}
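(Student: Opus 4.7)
The strategy is a polynomial-time reduction from \Cref{thm:hard-general}. Given a hard piecewise linear instance $f(\bm{x})=\max_{1\leq i\leq l}\min_{j\in M_i}(\bm{a}_j^\top\bm{x}+b_j)$ supplied by that theorem, I construct a shallow network built only from ReLU, max-pooling, and convolution, together with a single synthetic training sample, whose empirical loss $L(\bm{\theta})$, viewed as a function of the parameters, realizes $f$ so that the Fr\'echet stationarity test transfers directly.

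The network has three computational stages. A first convolutional layer, acting on a fixed input that stacks the coefficient vectors $\{\bm{a}_j\}_{j=1}^m$ in disjoint windows and uses (a suitable subvector of) $\bm{\theta}$ as the shared filter, outputs the linear value $\bm{a}_j^\top\bm{\theta}$ at position $j$; the per-piece biases $b_j$ are accommodated by an appropriate parameter-and-input augmentation (e.g., pairing each window with an auxiliary coordinate of $\bm{\theta}$ pinned at $1$ at the test point). The inner $\min_{j\in M_i}$ operations are realized using the three permitted operators, e.g., inductively via the identity $\min(a,b)=a-\mathrm{ReLU}(a-b)$, or via $\min=-\max(-\cdot,-\cdot)$ with negation performed by a convolution with filter $-1$ and $\max$ by max-pooling over $M_i$. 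The outer $\max_i$ is then a final max-pooling layer. The resulting network has constant depth, polynomial size in the description of $f$, and first-layer width exactly $m$.

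Using a single sample $(\bm{x}_0,y_0)$ with $\bm{x}_0$ the fixed encoding and the linear loss $\ell(z,y)\coloneqq z$, one has $L(\bm{\theta})=f(\bm{\theta})$ at the test point (in the relevant coordinates); a direct computation of the directional derivative then shows that the Fr\'echet subdifferential of $L$ projects onto $\widehat{\partial}f(\bm{0})$ in the $\bm{\theta}$-coordinates, with controlled behavior in the auxiliary coordinate, so that $\dist(\bm{0},\widehat{\partial}L)\leq 1/\sqrt{\eta}$ is equivalent to the test on $f$ in \Cref{thm:hard-general}. Since the number of pieces $m$ of a nontrivial hard instance exceeds the parameter dimension $d$, the condition $\eta>m$ in the corollary implies $\eta>d$ in the theorem, placing the reduction inside its hard regime; the co-NP-hardness claim (for $\eta>m$) and the strong co-NP-hardness claim (for exact $\bm{0}\in\widehat{\partial}L$) then follow verbatim, since integer weights and a rational test point are preserved by the reduction.

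The main obstacle is the faithful realization of per-piece biases $b_j$ under the weight-sharing constraint of standard convolution, together with the bookkeeping needed to relate the Fr\'echet subdifferential of the bias-augmented loss to that of the original $f$. All remaining steps---max-pool for $\max$, a ReLU or sign-flip identity for $\min$, and the accounting of width, depth, and Lipschitz constant---are standard once this encoding is fixed.
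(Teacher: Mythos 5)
Your overall strategy---reduce from \Cref{thm:hard-general} by realizing the hard piecewise linear function as the parameter-space loss of a conv/ReLU/max-pool network---is the same as the paper's. But your execution creates a genuine gap at exactly the point you flag as ``the main obstacle,'' and the paper's proof is designed precisely to avoid that obstacle. The paper does not start from a general max--min representation. It reduces from the specific function $f_{\textsf{PLT}}(\bm{d})=\max_i\big(-\sum_{j=1}^3\max\{\bm{d}^\top\bm{y}_{3(i-1)+j},0\}\big)$ of \Cref{prob:plt}, which is already bias-free and contains no $\min$: the inner negated sum is realized by a second convolution whose weights $\bm{u}$ are set to $-\mathbf{1}$ at the test point, and the outer $\max$ is a single max-pool. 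No $\min$ gadgets, no bias augmentation, no extra depth. Your plan to encode $\min$ via $\min(a,b)=a-\mathrm{ReLU}(a-b)$ and biases via a pinned auxiliary coordinate introduces additional trainable parameters in additional layers, and the Fr\'echet subdifferential of the loss is taken with respect to \emph{all} of them.

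The unproved assertion that $\widehat{\partial}L$ ``projects onto $\widehat{\partial}f(\bm{0})$ \ldots{} with controlled behavior in the auxiliary coordinate'' is the crux, not a routine bookkeeping step. If any auxiliary parameter admits a first-order descent direction at the test point, then $\widehat{\partial}L$ can become empty (making the test trivially ``no'' and destroying the reduction), or the distance can be inflated independently of the $\bm{\theta}$-part. The paper handles this in \Cref{lem:nnt-co-np-hard}: because the test point has $\bm{w}=\bm{0}$, every ReLU output vanishes there, so perturbations of the outer weights $\bm{u}$ contribute only second-order terms; the directional derivative computes exactly to $f_{\textsf{PLT}}(\bm{d}^w)$ independent of $\bm{d}^u$, and setting $\bm{d}^w=\bm{0}$, $\bm{d}^u=\bm{g}^u$ forces $\bm{g}^u=\bm{0}$. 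Your construction has no analogous mechanism for the $\min$-gadget and bias parameters, and you would also need to redo this argument for each extra layer. Finally, your dimension bookkeeping is off: in the paper's reduction the corollary's first-layer width $m$ \emph{equals} the theorem's domain dimension $d$ (both are the number of Boolean variables), so $\eta>m$ transfers directly; your claim that ``the number of pieces $m$ \ldots{} exceeds the parameter dimension $d$'' conflates two different quantities and is not what makes the thresholds match.
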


\Cref{coro:nnhard} shows a computational tractability separation for the stationarity test between smooth and nonsmooth networks. In the smooth setting, given the gradient of every component function, we can compute the gradient norm of the loss function by iteratively applying chain rule. But in the nonsmooth case, while the subdifferential of every elemental function can be computed easily, the validity of the subdifferential chain rule like those in \Cref{fact:rule} is not justified, which turns out to cause a serious computational hurdle in stationarity test (strong co-NP-hardness). %

\section{Regularity Conditions}\label{sec:regularity}

In this section, we study the regularity conditions for the validity of the equality-type chain rule  in terms of Clarke, Fr\'echet, and limiting subdifferentials of the empirical loss of two-layer ReLU networks.  
\subsection{Setup}
For simplicity of reference, we introduce the following notation, which will be used in various  subdifferential constructions of the empirical loss $L$.

\begin{Definition} Let the parameters $\{(u_k, \bm{w}_k)\}_{k=1}^H$ be given. We define the following shorthands:
\begin{enumerate}[label=\textnormal{(\alph*)}]
	\item We write constants $\rho_i \coloneqq \ell_i'\left( \sum_{k=1}^H u_k\cdot \max\left\{ \bm{w}_k^\top \bm{x}_i, 0 \right\} \right) $ for any $i \in [N]$.
	\item For any $k \in [H]$ and $\bm{w}_k \in \mathbb{R}^d$, we define the following two indices sets:
\[
\begin{aligned}
\mathcal{I}_k^+(\bm{w}_k) &\coloneqq \left\{ i: \bm{w}_k^\top \bm{x}_i = 0, u_k\cdot \rho_i \geq 0, i \in [N] \right\}, \\
\mathcal{I}_k^-(\bm{w}_k) &\coloneqq \left\{ i: \bm{w}_k^\top \bm{x}_i = 0, u_k\cdot \rho_i < 0,  i \in [N] \right\}.	
\end{aligned}
\]
We may write $\mathcal{I}_k^+$ and $\mathcal{I}_k^-$ when the reference point $\bm{w}_k$ is clear from the context.
\item For any $k \in [H]$, we define the following nonempty convex compact set $G^C_k \subseteq \mathbb{R}^d$ related to the Clarke subdifferential:
\[
 G^C_k\coloneqq \sum_{i \in [N]\backslash (\mathcal{I}_k^+\cup\mathcal{I}_k^-)} u_k\rho_i\cdot \mathbf{1}_{\bm{w}_k^\top\bm{x}_i > 0} \cdot\bm{x}_i+ \sum_{j\in  \mathcal{I}_k^+\cup\mathcal{I}_k^-} u_k\rho_j\cdot \bm{x}_j\cdot [0,1].
\]
\item For any $k \in [H]$, we define the following nonempty compact set $G^L_k\subseteq \mathbb{R}^d$ related to the limiting subdifferential:
\[
\begin{aligned}
	G^L_k&\coloneqq  \sum_{i \in [N]\backslash (\mathcal{I}_k^+\cup\mathcal{I}_k^-)} u_k\rho_i\cdot \mathbf{1}_{\bm{w}_k^\top\bm{x}_i > 0} \cdot\bm{x}_i \\ 
	&\qquad+ \sum_{j\in  \mathcal{I}_k^+} u_k\rho_i\bm{x}_j\cdot[0,1]
	+
 \lB \sum_{j\in  \mathcal{I}_k^-} u_k\rho_i\cdot \mathbf{1}_{\bm{d}^\top\bm{x}_j > 0} \cdot\bm{x}_j:\exists\bm{d} \in \mathbb{R}^d, \min_{t \in   \mathcal{I}_k^-}\left| \bm{x}_t^\top \bm{d} \right| > 0
 \rB.
\end{aligned}
\]
\item For any $k \in [H]$, we define the following convex compact set $G^F_k\subseteq \mathbb{R}^d$ related to the Fr\'echet subdifferential:
\[
G^F_k\coloneqq \sum_{i \in [N]\backslash (\mathcal{I}_k^+\cup\mathcal{I}_k^-)} u_k\rho_i\cdot \mathbf{1}_{\bm{w}_k^\top\bm{x}_i > 0} \cdot\bm{x}_i + \sum_{j\in  \mathcal{I}_k^+} u_k\rho_j\bm{x}_j\cdot[0,1]+
 \left\{ \begin{array}{rcl}
          \emptyset & \mbox{if}
         & \left| \mathcal{I}_k^- \right| > 0, \\
        \bm{0} & \mbox{if} & \left| \mathcal{I}_k^- \right| = 0.
                \end{array}\right.
\]
\item If an equation holds for all the three subdifferentials, i.e., Clarke/limiting/Fr\'echet subdifferentials ($\partial_Cf/\partial f/\widehat{\partial}f$), we will write the equation simply with $\partial_\triangleleft f$ and also $G_k^\triangleleft$ (for $G_k^C/G_k^L/G_k^F$). For example, if the equation $\partial_\triangleleft f_k(\bm{w}_k) = G_k^\triangleleft$  holds , then we get $\partial_C f_k(\bm{w}_k) = G_k^C, \partial f_k(\bm{w}_k) = G_k^L$, and $\widehat{\partial} f_k(\bm{w}_k) = G_k^F$.
\end{enumerate}
\end{Definition}

\subsection{Main Results}
\begin{Theorem}[Clarke chain rule]\label{thm:reluMain-CR} Under \Cref{assu:loss},
we claim that the exact Clarke subdifferential chain rule holds for $L$ at a given point $(u_1, \bm{w}_1, \dots, u_H, \bm{w}_H)$, that is 
\[
\partial_C L(u_1,\bm{w}_1,\dots,u_H,\bm{w}_H) = \prod_{k=1}^H \left\{  \sum_{i=1}^N \rho_i \cdot \max\left\{ \bm{w}_k^\top \bm{x}_i, 0 \right\}  \right\} \times G^C_k,
\]
if and only if
 the data points $\{\bm{x}_i\}_{i=1}^N$ satisfy the following Span Qualification (SQ):
\begin{framed}
	\[\label{eq:sq}
	\bigcup_{1\leq k \leq H}\spn \left(\lB \bm{x}_i \rB_{i\in\mathcal{I}_k^+} \right) \cap 
	\spn \left(\lB \bm{x}_j \rB_{j\in\mathcal{I}_k^-} \right) = \{\bm{0}\}. \tag{SQ}\vspace{-2.5mm}
	\] 
\end{framed}
\end{Theorem}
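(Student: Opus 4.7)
The plan is to exploit that $L$ depends smoothly on each scalar $u_k$ and that perturbing $\bm{w}_k$ affects only the activations $\mathbf{1}_{\bm{w}_k^\top\bm{x}_i>0}$ (not those of any other block), so that the Clarke subdifferential of $L$ factors as a Cartesian product and the theorem reduces, for each $k$ independently, to analyzing the $\bm{w}_k$-partial Clarke subdifferential. First I would compute $\partial_C L$ as the convex hull of limits of gradients at smooth approaching points $(u^\nu,\bm{w}^\nu)$: the $u_k$-component of every such limit equals the single value $\sum_i\rho_i\max\{\bm{w}_k^\top\bm{x}_i,0\}$, while the $\bm{w}_k$-component equals the ``fixed'' contribution from $\{i:\bm{w}_k^\top\bm{x}_i\neq 0\}$ plus $\sum_{i\in\mathcal{I}_k^+\cup\mathcal{I}_k^-}\mathbf{1}_{\bm{d}_k^\top\bm{x}_i>0}\,u_k\rho_i\bm{x}_i$, which is determined by the approach direction $\bm{d}_k$ in the $\bm{w}_k$-block alone. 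Because different blocks pick their $\bm{d}_k$ independently and convex hull commutes with finite Cartesian products, the theorem reduces to proving that, for each $k$,
\begin{equation*}
\conv\Bigl\{\textstyle\sum_{i\in\mathcal{I}_k^+\cup\mathcal{I}_k^-}\mathbf{1}_{\bm{d}^\top\bm{x}_i>0}\,u_k\rho_i\bm{x}_i:\bm{d}\in\mathbb{R}^d\Bigr\}=\textstyle\sum_{j\in\mathcal{I}_k^+\cup\mathcal{I}_k^-}u_k\rho_j\bm{x}_j\cdot[0,1]
\end{equation*}
holds if and only if $\spn\{\bm{x}_i\}_{i\in\mathcal{I}_k^+}\cap\spn\{\bm{x}_j\}_{j\in\mathcal{I}_k^-}=\{\bm{0}\}$.

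For sufficiency I would compare the support functions of the two convex compact sets at an arbitrary probe $\bm{u}$. The right-hand side is a zonotope with support function $\bm{u}\mapsto\sum_j(u_k\rho_j\bm{u}^\top\bm{x}_j)_+$, attained at the sign pattern $\sigma_j^\star=\mathbf{1}_{u_k\rho_j\bm{u}^\top\bm{x}_j>0}$. For the left-hand side to attain the same value, one needs a $\bm{d}\in\mathbb{R}^d$ whose induced pattern $\mathbf{1}_{\bm{d}^\top\bm{x}_j>0}$ coincides with $\sigma^\star$---equivalently, $\bm{d}^\top\bm{x}_i$ shares the sign of $\bm{u}^\top\bm{x}_i$ for $i\in\mathcal{I}_k^+$ and is opposite in sign for $j\in\mathcal{I}_k^-$. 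Under \eqref{eq:sq}, $V^+:=\spn\{\bm{x}_i\}_{i\in\mathcal{I}_k^+}$ and $V^-:=\spn\{\bm{x}_j\}_{j\in\mathcal{I}_k^-}$ form an internal direct sum, so the linear functional on $V^+\oplus V^-$ that agrees with $\bm{u}^\top$ on $V^+$ and with $-\bm{u}^\top$ on $V^-$ is well defined; extending it to $\mathbb{R}^d$ and perturbing slightly to avoid boundary zeros furnishes the desired $\bm{d}$, so the two support functions agree at $\bm{u}$.

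For necessity I would suppose \eqref{eq:sq} fails at some $k$ and fix $\bm{0}\neq\bm{v}=\sum_i a_i\bm{x}_i=\sum_j b_j\bm{x}_j$ with coefficients not all zero. Every $\bm{d}\in\mathbb{R}^d$ then obeys the linear identity $\sum_i a_i(\bm{d}^\top\bm{x}_i)=\sum_j b_j(\bm{d}^\top\bm{x}_j)$, which makes the pattern $\sigma^\star$ defined by $\sigma_i^\star=\mathbf{1}_{a_i>0}$ on $\mathcal{I}_k^+$ and $\sigma_j^\star=\mathbf{1}_{b_j<0}$ on $\mathcal{I}_k^-$ unrealizable (it would force the left-hand sum strictly positive and the right-hand sum strictly negative). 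Writing the $\bm{w}_k$-partial Clarke subdifferential as $M\cdot\conv(R)$ and the zonotope as $M\cdot[0,1]^m$, where the columns of $M$ are the vectors $u_k\rho_j\bm{x}_j$ and $R\subseteq\{0,1\}^m$ is the set of realizable sign patterns, I would exhibit a vector in the second set but not in the first by constructing a separating affine hyperplane built from $\bm{v}$ (via a Farkas-type argument applied to the realizability system), thereby producing the required gap.

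The main obstacle I expect is this last step: translating the sign-pattern unrealizability of $\sigma^\star$ into a strict gap between the two convex sets in the ambient space, despite the possible non-injectivity of $M$, which could in principle absorb the obstruction via elements of $\ker M$. Making this rigorous requires using the specific form of $\bm{v}$ to produce a single linear functional that separates $M\sigma^\star$ from $M\cdot\conv(R)+\ker M$ simultaneously, and, on the sufficiency side, a careful verification that the direction $\bm{d}$ constructed by the direct-sum argument does approach a smooth point of $L$ so that the corresponding gradient genuinely appears as a limit of gradients.
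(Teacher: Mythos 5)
Your high-level decomposition (reduce to a per-block statement about $\conv\{M\sigma:\sigma\text{ realizable}\}$ versus the zonotope $M[0,1]^m$, where the columns of $M$ are the vectors $u_k\rho_j\bm{x}_j$ indexed by $\mathcal{I}_k^+\cup\mathcal{I}_k^-$) matches what the paper does, though the paper gets there via an explicit partial-linearization theorem (freezing $\rho_i$ at its limit value, justified by the local Lipschitzness of $\ell_i'$) and product rules, whereas you compute limits of gradients directly; your version is fine in spirit but should make the continuity of $\ell_i'$ explicit and restrict to directions $\bm{d}$ with all $\bm{d}^\top\bm{x}_i\neq 0$, since only those correspond to differentiable approach points. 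Your sufficiency argument is genuinely different from the paper's and is attractive: the paper changes variables via two SVDs to make the $\mathcal{I}_k^+$ and $\mathcal{I}_k^-$ parts separable and then invokes convexity of each part, while you compare support functions and use the direct sum $V^+\oplus V^-$ to build a probe-realizing direction. That argument is correct and arguably cleaner.

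The genuine gap is in necessity, exactly where you flag it. From the failure of \eqref{eq:sq} you extract one unrealizable sign pattern $\sigma^\star$, but unrealizability of $\sigma^\star$ does not yield $M\sigma^\star\notin\conv(MR)$: because $M$ is typically far from injective, $M\sigma^\star$ may equal $M\sigma'$ for a realizable $\sigma'$, or may be a convex combination of images of realizable patterns (e.g.\ with $\bm{x}_1=\bm{e}_1$, $\bm{x}_2=-\bm{e}_1$ both in $\mathcal{I}^+$, the pattern $(1,1)$ is unrealizable yet $M(1,1)=\bm{0}\in\conv(MR)$). Equivalently, in the support-function picture you would need a probe $\bm{u}$ for which \emph{every} pattern on the maximizing face of the cube is unrealizable, not merely \emph{some} unrealizable pattern; your sketch does not produce such a $\bm{u}$, and ``a Farkas-type argument'' is a placeholder for the entire difficulty. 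The paper's resolution of this point is the bulk of its Lemma on necessity: it first performs a chain of reductions --- justified by a separate lemma showing that a strict inclusion $A\subsetneq B$ with $A$ closed convex survives Minkowski addition of a bounded set, which is what lets one discard redundant generators without losing strictness --- until $\{\bm{x}_i\}_{i=1}^n\cup\{\bm{y}_j\}_{j=1}^{m-1}$ are linearly independent and $\bm{y}_m$ is the unique dependent vector with all coefficients nonzero; it then uses Gordan's theorem to certify that two specific vertices $\bm{\nabla}_1,\bm{\nabla}_2$ of the zonotope are not Bouligand subgradients, and finally a three-case coefficient analysis plus an extreme-point lemma for zonotopes generated by linearly independent vectors to show $\bm{\nabla}_1\notin\conv\big(\overline{G}^C_{\textsf{PL}}\backslash\{\bm{\nabla}_1,\bm{\nabla}_2\}\big)$. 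Without some substitute for these reductions and the extreme-point step, your necessity direction does not close.
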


\begin{Remark}
Note that for any $k\in[H]$, the indices sets $\mathcal{I}_k^-$ and $\mathcal{I}_k^+$ can be computed in $O(Nd)$. Then, checking SQ is no harder than checking the Linear Independence Constraint Qualification (LICQ) in nonlinear programming and can be done with, e.g., Zassenhaus algorithm.
\end{Remark}

\begin{Theorem}[Limiting chain rule]\label{thm:reluMain-CR-limiting} Under \Cref{assu:loss},
we claim that the exact limiting subdifferential chain rule holds for $L$ at a given point $(u_1, \bm{w}_1, \dots, u_H, \bm{w}_H)$, that is 
\[
\partial L(u_1,\bm{w}_1,\dots,u_H,\bm{w}_H) = \prod_{k=1}^H \left\{   \sum_{i=1}^N \rho_i \cdot \max\left\{ \bm{w}_k^\top \bm{x}_i, 0 \right\}  \right\} \times G^L_k,
\]
if and only if
 the data points $\{\bm{x}_i\}_{i=1}^N$ satisfy SQ.
\end{Theorem}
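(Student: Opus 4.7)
The plan is to argue sufficiency and necessity of SQ separately, closely paralleling the Clarke chain rule proof in \Cref{thm:reluMain-CR}. Three ingredients drive everything: the always-valid inclusions $\widehat{\partial}L\subseteq \partial L\subseteq \partial_C L$ from \Cref{fact:relation-fermat} and the matching inclusions $G^F_k\subseteq G^L_k\subseteq G^C_k$; the block-separable structure in $(u_k,\bm{w}_k)$ together with smoothness in each $u_k$; and the limit characterization $\partial L(\bm{x})=\limsup_{\bm{x}'\to\bm{x}}\widehat{\partial}L(\bm{x}')$. All of the work therefore reduces to the single-hidden-node $\bm{w}_k$-problem after fixing the remaining blocks.

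For the inclusion ``$\subseteq$'' under SQ, I would first invoke \Cref{thm:reluMain-CR} together with $\partial L\subseteq\partial_C L$ to place any limiting subgradient inside $\prod_{k=1}^H\{\cdot\}\times G^C_k$. The remaining task is to refine the coefficient on each $\bm{x}_j$, $j\in\mathcal{I}_k^-$, from the interval $[0,1]$ down to $\{0,1\}$ and to certify that the resulting sign pattern is produced by a single direction $\bm{d}$ with $\min_{t\in\mathcal{I}_k^-}|\bm{x}_t^\top\bm{d}|>0$. Given any sequence $\bm{x}^\nu\to\bm{x}$ with $\bm{g}^\nu\in\widehat{\partial}L(\bm{x}^\nu)$, $\bm{g}^\nu\to\bm{g}$, the ReLU term indexed by $j\in\mathcal{I}_k^-$ is locally concave (since $u_k\rho_j<0$), so the mere existence of $\widehat{\partial}L(\bm{x}^\nu)$ forces $(\bm{w}_k^\nu)^\top\bm{x}_j\neq 0$ eventually and pins the $\bm{x}_j$-coefficient of $\bm{g}^\nu$ to $\mathbf{1}_{(\bm{w}_k^\nu)^\top\bm{x}_j>0}\in\{0,1\}$; extracting a convergent subsequence of $(\bm{w}_k^\nu-\bm{w}_k)/\|\bm{w}_k^\nu-\bm{w}_k\|$ supplies the $\bm{d}$ required by $G^L_k$.

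For the reverse inclusion ``$\supseteq$'' under SQ, I would construct approximating Fréchet subgradients explicitly. Given a target element with coefficients $\alpha_j\in[0,1]$ on $\mathcal{I}_k^+$ and $\beta_j=\mathbf{1}_{\bm{d}^\top\bm{x}_j>0}$ on $\mathcal{I}_k^-$, set $V\coloneqq\spn(\{\bm{x}_i\}_{i\in\mathcal{I}_k^+})^\perp$ and consider the linear map $T:\bm{d}\mapsto(\bm{d}^\top\bm{x}_j)_{j\in\mathcal{I}_k^-}$. A short dimension count based on the SQ identity $\dim(\spn(\mathcal{I}_k^+)+\spn(\mathcal{I}_k^-))=\dim\spn(\mathcal{I}_k^+)+\dim\spn(\mathcal{I}_k^-)$ shows $T(V)=T(\mathbb{R}^d)$, so some $\bm{d}'\in V$ reproduces $T(\bm{d})$ and, in particular, its sign pattern on $\mathcal{I}_k^-$. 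Moving along $\bm{w}_k^\epsilon\coloneqq\bm{w}_k+\epsilon\bm{d}'$ keeps every convex corner in $\mathcal{I}_k^+$ exactly active (so the full $[0,1]$ freedom survives in $\widehat{\partial}$ at $\bm{w}_k^\epsilon$) while resolving each concave corner in $\mathcal{I}_k^-$ on the prescribed side; a direct computation of $\widehat{\partial}L(\bm{w}_k^\epsilon)$ then yields Fréchet subgradients converging to the target as $\epsilon\downarrow 0$.

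For necessity, if SQ fails at some $k^*$ the same dimension count gives $T(V)\subsetneq T(\mathbb{R}^d)$ strictly. Pick a generic $\bm{n}\in T(\mathbb{R}^d)\cap T(V)^\perp$ with no zero entries and let $\sigma^*\coloneqq\sgn(\bm{n})$. Then $\sigma^*$ is realized inside $G^L_{k^*}$ via the direction corresponding to $\bm{n}$, but no $\bm{s}'\in T(V)$ has $\sgn(\bm{s}')=\sigma^*$, as that would force $\bm{n}^\top\bm{s}'=\sum_j n_j s'_j>0$ and contradict $\bm{n}\perp T(V)$. Pairing this $\sigma^*$ with strictly interior $\alpha_j\in(0,1)$ on $\mathcal{I}_{k^*}^+$ produces an element of the claimed right-hand side that cannot arise as a limit of Fréchet subgradients: any approximating sequence must eventually satisfy $(\bm{w}_{k^*}^\nu)^\top\bm{x}_i=0$ for all $i\in\mathcal{I}_{k^*}^+$ (otherwise the $\bm{x}_i$-coefficient is pinned to $\{0,1\}$ and cannot converge to an interior $\alpha_j^*$), which confines the sequence to $V$ and forces its sign pattern on $\mathcal{I}_{k^*}^-$ into $\sgn(T(V))$, excluding $\sigma^*$. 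The main obstacle is precisely this last step: one has to tie together the continuous $[0,1]$-freedom on $\mathcal{I}_k^+$ and the discrete direction-indexed sign pattern on $\mathcal{I}_k^-$ in a single approximating sequence---SQ is exactly the condition that decouples them.
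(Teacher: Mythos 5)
Your overall architecture (reduce to the per-unit problem, split the active indices into $\mathcal{I}_k^+$ and $\mathcal{I}_k^-$, and work from the limit characterization $\partial L=\limsup\widehat{\partial}L$) matches the paper's, and your ``$\supseteq$'' construction via a direction $\bm{d}'\in V=\spn(\{\bm{x}_i\}_{i\in\mathcal{I}_k^+})^\perp$ with $T(V)=T(\mathbb{R}^d)$ under SQ is sound. But the necessity direction has a genuine flaw. You assume that an element of the right-hand side written with interior coefficients $\alpha_j\in(0,1)$ on $\mathcal{I}_{k^*}^+$ can only be approximated by Fr\'echet subgradients at points keeping those kinks active. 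When SQ fails, the generators of the $+$ and $-$ groups are linearly dependent, the representation of a vector in the Minkowski sum is non-unique, and the same vector can arise as a limit of gradients from fully inactive points with a completely different (non-interior) representation. Concretely, take $f(w)=2\max\{w,0\}-\max\{w,0\}$ at $w=0$ (so $\bm{x}_1=\bm{x}_2=1$, SQ fails, $V=\{0\}$): your recipe selects $\sigma^*=1$ and, say, $\alpha=1/2$, producing the element $2\cdot\tfrac12-1=0$, which \emph{does} lie in $\partial f(0)=[0,1]$ (approximate from $w<0$). So the element you exhibit need not witness the failure of equality. A secondary issue: a vector $\bm{n}\in T(\mathbb{R}^d)\cap T(V)^\perp$ with no zero entries need not exist (e.g.\ $T(\mathbb{R}^d)=\mathbb{R}^2$, $T(V)=\spn(\bm{e}_1)$). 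The paper avoids all of this by taking convex hulls of both sides and using $\partial_C f=\conv\partial f$ to reduce necessity for the limiting rule to the already-established Clarke necessity, which is itself proved by a Gordan's-theorem/extreme-point argument (\Cref{lem:s-nece}); some such global argument is needed, since coefficient-tracking breaks down precisely when SQ fails.

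There is also a gap in your ``$\subseteq$'' direction: the claim that ``the mere existence of $\widehat{\partial}L(\bm{x}^\nu)$ forces $(\bm{w}_k^\nu)^\top\bm{x}_j\neq0$'' for $j\in\mathcal{I}_k^-$ does not follow from local concavity of that single summand --- a concave kink can be dominated by convex kinks sharing its span ($2\max\{x,0\}-\max\{x,0\}$ again has nonempty Fr\'echet subdifferential at $0$). You need to first decouple the $\mathcal{I}_k^+$- and $\mathcal{I}_k^-$-parts, which is exactly what SQ buys and what the paper does via the orthogonal/SVD splitting in \Cref{lem:s-suff} before applying \Cref{lem:frechet-empty} to the isolated concave part; the same decoupling is what makes ``the coefficient on $\bm{x}_j$'' well defined at all. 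This is patchable under SQ, but it is the actual content of the sufficiency proof rather than a detail. Finally, a minor point: the normalized sequence $(\bm{w}_k^\nu-\bm{w}_k)/\|\bm{w}_k^\nu-\bm{w}_k\|$ may converge to a $\bm{d}$ with $\bm{d}^\top\bm{x}_t=0$ for some $t\in\mathcal{I}_k^-$; it is cleaner to take $\bm{d}=\bm{w}_k^\nu-\bm{w}_k$ for a single large $\nu$, at which the sign pattern has already stabilized.
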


For Fr\'echet subdifferential, the situation is different as the default chain rule is the reverse set inclusion $\widehat{\partial} L(u_1,\bm{w}_1,\dots,u_H,\bm{w}_H) \supseteq \prod_{k=1}^H \left\{ \sum_{i=1}^N \rho_i \cdot \max\left\{ \bm{w}_k^\top \bm{x}_i, 0 \right\}  \right\} \times G^F_k$; see \cite[Corollary 10.9, Theorem 10.49]{rockafellar2009variational}. If $\widehat{\partial} L(u_1,\bm{w}_1,\dots,u_H,\bm{w}_H) = \emptyset$, we have  the exact chain rule trivially, as $G^F_k$ can only be the empty set. Therefore, the interesting case is when the Fr\'echet subdifferential is nonempty. 
\begin{Theorem}[Fr\'echet chain rule]\label{thm:reluMain-CR-frechet} Under \Cref{assu:loss}, for any given point such that the subdifferential $\widehat{\partial} L(u_1,\bm{w}_1,\dots,u_H,\bm{w}_H) \neq \emptyset$, we have the following exact chain rule for the empirical loss $L$ 
\[
\widehat{\partial} L(u_1,\bm{w}_1,\dots,u_H,\bm{w}_H) = \prod_{k=1}^H \left\{ \sum_{i=1}^N \rho_i \cdot \max\left\{ \bm{w}_k^\top \bm{x}_i, 0 \right\}  \right\} \times G^F_k,
\]
if and only if
 the data points $\{\bm{x}_i\}_{i=1}^N$ satisfy SQ.
 \end{Theorem}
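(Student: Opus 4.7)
My plan is to reduce the claimed equality, under the standing hypothesis $\widehat{\partial} L(\cdot)\neq\emptyset$, to the single structural condition $\mathcal{I}_k^-(\bm{w}_k)=\emptyset$ for every $k\in[H]$, and then to verify that this condition is equivalent to SQ in the presence of nonemptiness. The reverse inclusion $\widehat{\partial}L(\cdot)\supseteq\prod_{k}\{\cdot\}\times G_k^F$ is the standard Fr\'echet calculus rule noted immediately before the theorem, so only the inclusion $\subseteq$ requires argument.

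The ``only if'' direction is immediate once one inspects the definition of $G_k^F$: if equality holds with $\widehat{\partial}L\neq\emptyset$, then the right-hand product must be nonempty, forcing each $G_k^F\neq\emptyset$ and hence $\mathcal{I}_k^-=\emptyset$ for every $k$. Then $\spn(\{\bm{x}_j\}_{j\in\mathcal{I}_k^-})=\{\bm{0}\}$, so SQ is satisfied trivially.

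For the ``if'' direction, fix any $\bm{s}=(s_1^u,s_1^w,\dots,s_H^u,s_H^w)\in\widehat{\partial} L$ (which exists by hypothesis). Because $L$ is locally Lipschitz and directionally differentiable, the Fr\'echet condition is equivalent to $\bm{s}^\top\bm{d}\leq L'(\cdot;\bm{d})$ for every $\bm{d}$. Testing with directions supported only on the $k$-th block and matching the part linear in the scalar $\alpha_k$ yields $s_k^u=\sum_i\rho_i\max\{\bm{w}_k^\top\bm{x}_i,0\}$ together with
\[
(s_k^w-v_k)^\top\bm{\beta}\leq h_k^+(\bm{\beta})+h_k^-(\bm{\beta})\qquad\forall\bm{\beta}\in\mathbb{R}^d,
\]
where $v_k\coloneqq u_k\sum_{i:\bm{w}_k^\top\bm{x}_i>0}\rho_i\bm{x}_i$, $h_k^+(\bm{\beta})\coloneqq\sum_{i\in\mathcal{I}_k^+}\rho_iu_k\max\{\bm{\beta}^\top\bm{x}_i,0\}\geq 0$, and $h_k^-(\bm{\beta})\coloneqq\sum_{j\in\mathcal{I}_k^-}\rho_ju_k\max\{\bm{\beta}^\top\bm{x}_j,0\}\leq 0$. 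The principal technical step, and what I expect to be the hardest part of the proof, is to extract $\mathcal{I}_k^-=\emptyset$ from SQ and the mere existence of this single $\bm{s}$. Restricting $\bm{\beta}$ to $\spn(\{\bm{x}_i\}_{i\in\mathcal{I}_k^+})^\perp$ annihilates $h_k^+$, and applying the displayed inequality to both $\bm{\beta}$ and $-\bm{\beta}$ gives $-h_k^-(-\bm{\beta})\leq(s_k^w-v_k)^\top\bm{\beta}\leq h_k^-(\bm{\beta})\leq 0$; together with $h_k^-\leq 0$ this squeezes $h_k^-(\bm{\beta})=h_k^-(-\bm{\beta})=0$ on all of $\spn(\{\bm{x}_i\}_{i\in\mathcal{I}_k^+})^\perp$. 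Since each summand in $h_k^-$ is nonpositive and each coefficient $\rho_ju_k$ is strictly negative, it follows that $\bm{\beta}^\top\bm{x}_j\leq 0$ for every such $\bm{\beta}$ and every $j\in\mathcal{I}_k^-$; exchanging $\bm{\beta}\leftrightarrow-\bm{\beta}$ upgrades this to $\bm{\beta}^\top\bm{x}_j=0$, so $\bm{x}_j\in\spn(\{\bm{x}_i\}_{i\in\mathcal{I}_k^+})$. Combined with $\bm{x}_j\in\spn(\{\bm{x}_j\}_{j\in\mathcal{I}_k^-})$ and $\bm{x}_j\neq\bm{0}$ from \Cref{assu:loss}, this violates SQ, forcing $\mathcal{I}_k^-=\emptyset$.

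With $\mathcal{I}_k^-=\emptyset$ established for every $k$, the function $h_k=h_k^+$ is convex and positively homogeneous, so its Fr\'echet subdifferential at the origin coincides with its convex subdifferential $\sum_{i\in\mathcal{I}_k^+}\rho_iu_k\bm{x}_i\cdot[0,1]$. The partial inequality then places $s_k^w-v_k$ in this set, giving $s_k^w\in G_k^F$; assembling over $k$ yields $\bm{s}\in\prod_k\{\cdot\}\times G_k^F$, the missing inclusion $\widehat{\partial}L\subseteq\prod_k\{\cdot\}\times G_k^F$ and hence the theorem.
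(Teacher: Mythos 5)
Your proof is correct, and its ``if'' direction takes a genuinely different route from the paper's. The paper funnels all three chain-rule theorems through one pipeline: partial linearization of the $\ell_i$ via a Lagrange-scalarization result (\Cref{thm:linearization}), a product rule (\Cref{coro:prod}) to split off the $u_k$-coordinate, and then, for the residual piecewise-linear function, an SVD-based orthogonal decoupling of the $\mathcal{I}_k^+$- and $\mathcal{I}_k^-$-parts in which SQ is what permits the decoupling (\Cref{lem:s-suff}); the decisive Fr\'echet ingredient is then \Cref{lem:frechet-empty}, which shows the negative part has empty Fr\'echet subdifferential whenever a kink is active, so that under SQ the nonemptiness hypothesis forces $\mathcal{I}_k^-=\emptyset$ and the equality degenerates to the convex case. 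You instead argue directly from \Cref{def:subd-f}: every $\bm{s}\in\widehat{\partial}L$ satisfies your block-wise support-function inequality, and the squeeze on $\spn(\{\bm{x}_i\}_{i\in\mathcal{I}_k^+})^{\perp}$ shows that SQ plus the existence of a single such $\bm{s}$ already forces $\mathcal{I}_k^-=\emptyset$ (a nonzero $\bm{x}_j$ with $j\in\mathcal{I}_k^-$ would otherwise land in $\spn(\{\bm{x}_i\}_{i\in\mathcal{I}_k^+})$, violating SQ); the remaining identification of $s_k^w-v_k$ is the classical support-function correspondence for the sublinear $h_k^+$. This is more elementary and self-contained, bypassing \Cref{thm:linearization}, \Cref{coro:prod}, and \Cref{lem:frechet-empty} entirely, and it makes the role of SQ transparent; what the paper's heavier machinery buys is reuse, since the same decomposition also drives the Clarke and limiting versions, which a purely directional-derivative argument cannot reach. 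The ``only if'' direction is essentially the same trivial observation in both treatments. Two steps you leave implicit are standard and are invoked elsewhere in the paper: the block-separable formula for $L'(\cdot;\bm{d})$ (the chain and product rules for B-differentiable functions used in the proof of \Cref{lem:nnt-co-np-hard}) and the fact that a finite Cartesian product is nonempty only if every factor is.
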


\subsection{Discussion}\label{sec:discussion}
There are several existing regularity conditions related to the validity of exact chain rule of the empirical loss. We briefly introduce them here and defer the details to the \Cref{def:detailed-regularity} in \Cref{sec:apd-prf-discussion}.
\begin{Definition}[Regularities]\label{def:other-regularity} We consider the following regularity conditions:
\begin{itemize}
	\item General position data: \cite[Section 2.2]{montufar2014number}, \cite[Assumption 2]{yun2018efficiently}, and \cite{bubeck2020network};
	\item Linear Independence Kink Qualification (LIKQ): \cite[Definition 2.6]{griewank2019relaxing} and \cite[Definition 2]{griewank2016first};
	\item Linearly Independent Activated Data (LIAD): Let the index set $\mathcal{J}_k \coloneqq \{j:\bm{w}_k^\top\bm{x}_j=0\}$. For any fixed $k \in [H]$, the data points $\{\bm{x}_i\}_{i \in \mathcal{J}_k}$ are linearly independent. 
\end{itemize}
\end{Definition}
The general position assumption is from the study of hyperplane arrangement. If the data points are generated from an absolutely continuous probability measure (with respect to the Lebesgue measure), then they are in general position almost surely. The LIKQ is introduced by \citet[Definition 2]{griewank2016first} to ensure an efficient Fr\'echet stationarity test for piecewise differentiable function represented in abs-normal form. See \Cref{sec:abs-norm-form} for a brief introduction. The LIAD condition is natural and equivalent to the subjectivity condition in \Cref{fact:rule}.
Let us present the following result, in which we establish the relationship among SQ and the three other regularity conditions in \Cref{def:other-regularity}.
\begin{Proposition}[Regularity comparison]\label{prop:regularity-relation} For the empirical loss of a shallow ReLU network under \Cref{assu:loss}, we have the following relationship:
\[
\textnormal{general position } \Longrightarrow
\textnormal{ LIKQ } \iff
\textnormal{ LIAD } \Longrightarrow
\textnormal{ SQ}.
\]
\end{Proposition}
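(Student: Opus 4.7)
The chain \emph{general position} $\Longrightarrow$ \emph{LIKQ} $\Longleftrightarrow$ \emph{LIAD} $\Longrightarrow$ \emph{SQ} consists of two implications and one equivalence, each of which I would handle separately, with LIAD acting as the hub. My plan is to verify \emph{LIAD $\Longrightarrow$ SQ} and \emph{general position $\Longrightarrow$ LIAD} by direct linear-algebra arguments, and then to tackle the more delicate equivalence \emph{LIKQ $\Longleftrightarrow$ LIAD} by recasting $L$ in the abs-normal form of \cite{griewank2019relaxing}.

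\textbf{The two easy implications.} Fix $k \in [H]$ and set $\mathcal{J}_k := \{i : \bm{w}_k^\top \bm{x}_i = 0\}$; directly from the definitions, $\mathcal{I}_k^+$ and $\mathcal{I}_k^-$ are disjoint subsets of $\mathcal{J}_k$. Any $\bm{v} \in \spn(\{\bm{x}_i\}_{i \in \mathcal{I}_k^+}) \cap \spn(\{\bm{x}_j\}_{j \in \mathcal{I}_k^-})$ admits two expansions $\bm{v} = \sum_{i \in \mathcal{I}_k^+} \alpha_i \bm{x}_i = \sum_{j \in \mathcal{I}_k^-} \beta_j \bm{x}_j$ that combine into a single linear relation among $\{\bm{x}_i\}_{i \in \mathcal{J}_k}$; LIAD then forces all coefficients to vanish and $\bm{v} = \bm{0}$, so taking the union over $k$ delivers SQ. For \emph{general position $\Longrightarrow$ LIAD}, recall that in the hyperplane-arrangement sense of \cite{montufar2014number,yun2018efficiently} any $\leq d$ of the $\bm{x}_i$'s are linearly independent; since the active set $\{\bm{x}_i\}_{i \in \mathcal{J}_k}$ lies in the $(d-1)$-dimensional subspace $\{\bm{z}: \bm{w}_k^\top \bm{z} = 0\}$, we must have $|\mathcal{J}_k| \leq d-1$, and linear independence of these vectors then follows from general position, which is precisely LIAD.

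\textbf{LIKQ $\Longleftrightarrow$ LIAD, the main obstacle.} This is the technically delicate step, and I would prove it by explicitly placing $L$ into abs-normal form. Introducing the switching variables $z_{i,k} := \bm{w}_k^\top \bm{x}_i$ for $(i,k) \in [N] \times [H]$ and using the identity $\max\{z_{i,k}, 0\} = (z_{i,k} + |z_{i,k}|)/2$ realizes $L$ as an element of $C^d_{\textnormal{abs}}$ with the trainable parameters as the independent variables. The LIKQ of \cite[Definition 2.6]{griewank2019relaxing} requires that at the reference parameter $\bm{\theta} = (u_1,\bm{w}_1,\dots,u_H,\bm{w}_H)$, the $\bm{\theta}$-gradients of the \emph{active} switchings $\{(i,k): z_{i,k} = 0\}$ be linearly independent. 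Since $\nabla_{\bm{w}_{k'}} z_{i,k} = \bm{x}_i \cdot \mathbf{1}_{k'=k}$ and $\nabla_{u_{k'}} z_{i,k} = 0$, this Jacobian is block-diagonal across $k$, with the $k$-th block being the matrix whose rows are $\{\bm{x}_i^\top\}_{i \in \mathcal{J}_k}$; global linear independence is therefore equivalent to per-block independence, which is verbatim LIAD. The subtlety I would verify carefully is that the ``independent variables'' in the abs-normal construction are genuinely the trainable parameters and not derived intermediates; once that identification is nailed down, the block-diagonal structure renders the equivalence immediate.
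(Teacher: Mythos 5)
Your proposal is correct and follows essentially the same route as the paper: LIAD $\Rightarrow$ SQ via the two-sided expansion of a vector in the intersection of spans, LIKQ $\Leftrightarrow$ LIAD via the abs-normal form of $L$ (where $\bm{L}=\bm{0}$, so the LIKQ matrix $\nabla\bm{z}^\sigma$ reduces to the block-diagonal switching Jacobian with $k$-th block $\{\bm{x}_i^\top\}_{i\in\mathcal{J}_k}$), and general position $\Rightarrow$ LIAD, which the paper simply cites from \cite[Lemma 1]{yun2018efficiently} while you sketch the underlying hyperplane argument directly.
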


We exhibit two examples to show the one-side arrows in \Cref{prop:regularity-relation} are strict.
\begin{Example}[SQ $\nRightarrow$ LIAD]
Let the function $f:\mathbb{R}^4\rightarrow \mathbb{R}$ be given as
\[
f(x,y,z,b)\coloneqq\max\{2y+b,0\}+\max\{2x+2z+b,0\}+\max\{x+y+z+b,0\}-\max\{x-z+b,0\}.
\]
Consider $x=y=z=b=0$. It is easy to verify that SQ is satisfied but not LIAD.
Besides, $f$ is nonconvex, nonsmooth, and non-separable. Neither $f$ nor $-f$ is Clarke regular. But by \Cref{thm:reluMain-CR}, the equality-type subdifferential sum rule still holds.
\end{Example}

\begin{Example}[LIAD $\nRightarrow$ general position]
Let the function $f:\mathbb{R}^3\rightarrow \mathbb{R}$ be given as
\[
f(x,y,b)\coloneqq\max\{-2y+b,0\}+\max\{-y+b,0\}+\max\{x+b,0\}-\max\{y+b,0\}.
\]
Consider $x=y=1$ and $b=-1$. LIAD is satisfied, but the data is not in general position.
\end{Example}

In practice, for data $\bm{x}\in\mathbb{R}^d$, if the features of data  include a discrete-valued component, e.g., $x_1\in\{-1,+1\}$, then the points $\{\bm{x}_i\}_{i=1}^N$ are rarely in general position, as at least half of them must lie in the same affine hyperplane $\{\bm{y}:\bm{e}_1^\top \bm{y} = 1\}$ or $\{\bm{y}:\bm{e}_1^\top \bm{y} = -1\}$. 

\begin{Remark}[$G^L_k$ for general position data]
Besides, if the data points are in general position, we have the following compact representation for $G_k^L$
\[
G^L_k= \sum_{i \in [N]\backslash (\mathcal{I}_k^+\cup\mathcal{I}_k^-)} u_k\rho_i\cdot \mathbf{1}_{\bm{w}_k^\top\bm{x}_i > 0} \cdot\bm{x}_i +\sum_{j\in  \mathcal{I}_k^+} u_k\rho_j\bm{x}_j\cdot[0,1]+
 \sum_{j'\in  \mathcal{I}_k^-} u_k\rho_{j'}\bm{x}_{j'} \cdot \{0,1\}.
\]
	
\end{Remark}

The following corollary concerning the Clarke regularity of all local minimizers could be of independent interest.
\begin{Corollary}\label{prop:lmin-regular}
	If at a point, SQ is satisfied and the empirical loss function $L$ has nonempty Fr\'echet subdifferential here, then the function $L$ is Clarke regular at that point. Consequently, with data in general position, $L$ is Clarke regular at every local minimizer.
\end{Corollary}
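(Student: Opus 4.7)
The plan is to derive \Cref{prop:lmin-regular} directly from the chain-rule theorems \Cref{thm:reluMain-CR} and \Cref{thm:reluMain-CR-frechet}, since under SQ these two results give explicit formulas for $\partial_C L$ and $\widehat{\partial} L$ whose comparison reduces to comparing the per-block sets $G_k^C$ and $G_k^F$.

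First, assume SQ holds at the given point and $\widehat{\partial} L \neq \emptyset$. By \Cref{thm:reluMain-CR-frechet}, the product formula for $\widehat{\partial} L$ is valid, and since $\widehat{\partial} L$ is nonempty, each factor must be nonempty. Inspecting the definition of $G_k^F$, the factor at index $k$ is empty exactly when $|\mathcal{I}_k^-| > 0$. Therefore nonemptiness of $\widehat{\partial} L$ forces $\mathcal{I}_k^- = \emptyset$ for every $k \in [H]$. With $\mathcal{I}_k^-=\emptyset$, the formulas for $G_k^C$ and $G_k^F$ collapse to the same expression, namely $\sum_{i \in [N]\setminus \mathcal{I}_k^+} u_k\rho_i \mathbf{1}_{\bm{w}_k^\top\bm{x}_i>0} \bm{x}_i + \sum_{j\in \mathcal{I}_k^+} u_k\rho_j \bm{x}_j [0,1]$, so $G_k^C = G_k^F$ for all $k$. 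Combining this with the product representations from \Cref{thm:reluMain-CR} and \Cref{thm:reluMain-CR-frechet} yields $\partial_C L = \widehat{\partial} L$, which is Clarke regularity at the point by \Cref{def:clarke-regular}.

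For the second claim, fix any local minimizer $(u_1,\bm{w}_1,\dots,u_H,\bm{w}_H)$ of $L$. By the generalized Fermat rule in \Cref{fact:relation-fermat}, $\bm{0} \in \widehat{\partial} L$, so $\widehat{\partial} L \neq \emptyset$. The general position hypothesis gives SQ at this point via \Cref{prop:regularity-relation}. The first part of the corollary then applies and yields Clarke regularity at the local minimizer.

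The only subtle step is the bookkeeping in the first paragraph: one must observe that $\widehat{\partial} L \neq \emptyset$ not only activates \Cref{thm:reluMain-CR-frechet} but also, through the only mechanism by which $G_k^F$ can be empty, rules out the $\mathcal{I}_k^-$ sets entirely, which is precisely the discrepancy between the Clarke and Fréchet formulas. Once this observation is made, the rest is a term-by-term comparison and a direct invocation of Fermat's rule, so no further obstacle arises.
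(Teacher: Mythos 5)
Your proposal is correct and follows essentially the same route as the paper's proof: use the nonemptiness of $\widehat{\partial}L$ together with the Fr\'echet chain rule to force $\mathcal{I}_k^-=\emptyset$ for all $k$, observe that the Clarke and Fr\'echet product formulas then coincide, and for the second claim combine Fermat's rule with the implication from general position to SQ in \Cref{prop:regularity-relation}. Your write-up is in fact slightly more explicit than the paper's about why nonemptiness of $\widehat{\partial}L$ kills the $\mathcal{I}_k^-$ sets, but the argument is the same.
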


\section{Testing of Stationarity Concepts}\label{sec:test}
To perform the stationarity test, we need the following quantitative regularities to characterize the curvature of the pieces in the empirical loss.
\begin{Assumption}
In this section, we further assume that for any $i\in[N]$, the norm of data $\|\bm{x}_i\|_2 \leq R$ and the function $\ell_i$ is $L_\ell$-Lipschitz continuous with an $L_{\ell'}$-Lipschitz continuous gradient $\ell'_i$
\end{Assumption}

\subsection{Exact Stationarity Test}\label{sec:test-exact}

As an immediate illustration of the results in \Cref{sec:regularity}, we record the following exact testing schemes for Clarke and Fr\'echet stationary points. Compared with the developments in \cite{yun2018efficiently} which check the Fr\'echet stationarity from the primal perspective and use polyhedral geometry to avoid redundant computation, by using \Cref{thm:reluMain-CR-frechet}, our treatment for Fr\'echet stationarity is transparent and its correctness is self-evident.
\paragraph{Clarke stationarity.}

\begin{algorithm}[htp]  
	\caption{Exact Stationarity Test (Clarke)}  
	\label{alg:exact}
	\begin{algorithmic}[1]  
		\Procedure{ETest-C}{$u_1, \bm{w}_1, \dots, u_H, \bm{w}_H$, $\bm{x}_1, \dots, \bm{x}_N$} %
		\State compute $\{\rho_i\}_{i=1}^N$, $\mathcal{I}_k^+(\bm{w}_k)$, and $\mathcal{I}_k^-(\bm{w}_k)$ for any $k \in \{1,\dots,H\}$;
		\If{ Span Qualification \eqref{eq:sq} is not satisfied}
		\State \Return{\texttt{not-SQ}};
		\EndIf
		\For{$k\in\{1,\dots,H\}$}
		\State compute $\epsilon_{1,k} \leftarrow \left| \sum_{i=1}^N \rho_i \cdot \max\left\{ \bm{w}_k^\top \bm{x}_i, 0 \right\} \right|$;
		\State compute 
		$\epsilon_{2,k} \leftarrow \dist\left(\bm{0}, G_k^C \right);
		$ \Comment{convex QP}
		\EndFor
		\State \Return{$\sqrt{\sum_{k=1}^H\left(\epsilon_{1,k}\right)^2+\left(\epsilon_{2,k}\right)^2}$;}
		\EndProcedure
	\end{algorithmic}  
\end{algorithm} 

Suppose that SQ is satisfied at the point $(u_1,\bm{w}_1,\dots,u_H,\bm{w}_H)$. By \Cref{thm:reluMain-CR}, it is a Clarke stationarity point of $L$ if and only if, for any $k\in[H]$,
\begin{enumerate}[label=\textnormal{(\alph*)}]
	\item $0=\sum_{i=1}^N \rho_i \cdot \max\left\{ \bm{w}_k^\top \bm{x}_i, 0 \right\}$;
	\item $\bm{0} \in \sum_{i \in [N]\backslash (\mathcal{I}_k^+\cup\mathcal{I}_k^-)} u_k\rho_i\cdot \mathbf{1}_{\bm{w}_k^\top\bm{x}_i > 0} \cdot\bm{x}_i + \sum_{j\in  \mathcal{I}_k^+\cup\mathcal{I}_k^-} u_k\rho_j\bm{x}_j\cdot[0,1]$.
\end{enumerate}
Condition (a) is a simple equality test and condition (b) can be checked by solving a linear programming problem. \Cref{alg:exact} is for testing $\epsilon$-Clarke stationary points.
\paragraph{Fr\'echet stationarity.}

\begin{algorithm}[htp]  
	\caption{Exact Stationarity Test (Fr\'echet)}  
	\label{alg:exact-f}
	\begin{algorithmic}[1]  
		\Procedure{ETest-F}{$u_1, \bm{w}_1, \dots, u_H, \bm{w}_H$, $\bm{x}_1, \dots, \bm{x}_N$} %
		\State compute $\{\rho_i\}_{i=1}^N$, $\mathcal{I}_k^-(\bm{w}_k)$, and $\mathcal{I}_k^+(\bm{w}_k)$ for any $k \in \{1,\dots,H\}$;
		\If{ Span Qualification in \eqref{eq:sq} is not satisfied}
		\State \Return{\texttt{not-SQ}};
		\EndIf
		\For{$k\in\{1,\dots,H\}$}
		\If{$\mathcal{I}_k^-(\bm{w}_k) \neq \emptyset$}
		\State \Return $+\infty$;
		\EndIf
		\State compute $\epsilon_{1,k} = \left| \sum_{i=1}^N \rho_i \cdot \max\left\{ \bm{w}_k^\top \bm{x}_i, 0 \right\} \right|$;
		\State compute 
		$\epsilon_{2,k} = \dist\left(\bm{0}, G_k^F\right);
		$ \Comment{convex QP}
		\EndFor
		\State \Return{$\sqrt{\sum_{k=1}^H\left(\epsilon_{1,k}\right)^2+\left(\epsilon_{2,k}\right)^2}$;}
		\EndProcedure
	\end{algorithmic}  
\end{algorithm} 

	Suppose that SQ is satisfied at the point $(u_1,\bm{w}_1,\dots,u_H,\bm{w}_H)$. By \Cref{thm:reluMain-CR-frechet}, it is a Fr\'echet stationarity point of $L$ if and only if, for any $k\in[H]$,
\begin{enumerate}[label=\textnormal{(\alph*)}]
	\item $0=\sum_{i=1}^N \rho_i \cdot \max\left\{ \bm{w}_k^\top \bm{x}_i, 0 \right\}$;
	\item $\mathcal{I}_k^- = \emptyset$;
	\item $\bm{0} \in \sum_{i \in [N]\backslash (\mathcal{I}_k^+\cup\mathcal{I}_k^-)} u_k\rho_i\cdot \mathbf{1}_{\bm{w}_k^\top\bm{x}_i > 0} \cdot\bm{x}_i + \sum_{j\in  \mathcal{I}_k^+} u_k\rho_j\bm{x}_j\cdot[0,1]$.
\end{enumerate}
Similarly, all above conditions can be checked in polynomial time with \Cref{alg:exact-f}. %

\subsection{Robust Stationarity Test}
In this subsection, we introduce our main algorithmic results. First, we formally define the notion of stationarities that we are aiming to check; see \cite{davis2019stochastic,kornowski2021oracle,tian2022finite} for results on finding near-approximately stationary points for Lipschitz functions.
\begin{Definition}[Near-Approximate Stationarity, NAS]\label{def:nas}
	Given a locally Lipschitz function $f:\mathbb{R}^d\rightarrow\mathbb{R}$, we say that the point $\bm{x}\in\mathbb{R}^d$ is an
	\begin{itemize}
		\item $(\epsilon,\delta)$-Clarke NAS point, if $\dist\Big(\bm{0},\cup_{\bm{y}\in\mathbb{B}_\delta (\bm{x})} \partial_C f(\bm{y})\Big) \leq \epsilon$;
		\item $(\epsilon,\delta)$-Fr\'echet NAS point, if $\dist\Big(\bm{0},\cup_{\bm{y}\in\mathbb{B}_\delta (\bm{x})} \widehat{\partial} f(\bm{y})\Big) \leq \epsilon$.
	\end{itemize}
\end{Definition}

We consider a constructive approach, that is, we certify the $(\epsilon,\delta)$-Clarke NAS of a point $\bm{x}$ for the function $f$ only if we find a point $\bm{y} \in \mathbb{B}_\delta(\bm{x})$ satisfying $\dist(\bm{0}, \partial_C f(\bm{y})) \leq \epsilon$.
Note that, in any time, if a point $\bm{y}\in\mathbb{B}_\delta(\bm{x})$ passes the exact stationarity test, say, with \Cref{alg:exact}, then $\bm{x}$ must be an $(\epsilon,\delta)$-Clarke NAS point. 
In other words, there is no false positive in the test.
The question is that, if $\bm{x}$ is sufficiently closed to a Clarke stationary point, can we always find a point $\bm{y}$ near $\bm{x}$ such that $\bm{y}$ is $\epsilon$-Clarke stationary? That is to say, we need to control the false negative of our robust test. Without exploiting structures in the objective function, finding such a point is impossible in general \cite[Theorem 2.7]{tian2022no}. Our technique is a new rounding scheme (see \Cref{alg:rounding}), which is motivated by the notion of active manifold identification \cite{lewis2002active,lemarechal2000u} in the literature. This new rounding scheme is capable to identify the activation pattern of the target stationary point that $\bm{x}$ is sufficiently close to.

Now, suppose that $f$ is $L$-smooth and a point $\bm{x}^*$ satisfies $\|\nabla f(\bm{x}^*)\| \leq \epsilon$. Without knowing  the concrete structure of $f$, what we can say for any point $\bm{y} \in \mathbb{B}_\delta(\bm{x}^*)$ is that $\|\nabla f(\bm{y})\| \leq \epsilon + L\cdot \delta$, which is the best result we can hope for our test, as we do not assume any concrete structure in the loss $\ell_i$ except their smoothness. Such an estimation cannot hold trivially for a nonsmooth function. Consider $f(x) = |x|$ and $x^* = 0$. For any $\delta > 0$ and $0\neq y \in \mathbb{B}_\delta(x^*)$, we have $|f'(y)| = 1$. 

\subsubsection{Testing Clarke NAS}

\begin{algorithm}[htb]  
	\caption{Neural Rounding (Clarke)}  
	\label{alg:rounding}
	\begin{algorithmic}[1]  
		\Procedure{Rnd-C}{$u_1,\bm{w}_1, \dots, u_H,\bm{w}_H$, $\delta$, $\bm{x}_1, \dots, \bm{x}_N$} %
		\State compute $R = \max_{1 \leq i \leq N} \|\bm{x}_i \|$;
		\For{$k\in\{1,\dots,H\}$}
		\State compute $\widehat{\bm{w}}_k$ by solving the following convex QP %
		\begin{alignat*}{2}
			\widehat{\bm{w}}_k = \argmin_{ \bm{z} \in \mathbb{R}^d} &\ \| \bm{z} - \bm{w}_k \|^2 \\
			\textnormal{s.t.}\ \  &\ \bm{z}^\top \bm{x}_i \geq 2R\cdot\delta, && \forall i\in [N]: \bm{x}_i^\top \bm{w}_k > R\cdot \delta, \\
			&\ \bm{z}^\top \bm{x}_i \leq -2R\cdot\delta, \qquad&& \forall i\in [N]: \bm{x}_i^\top \bm{w}_k < -R\cdot \delta, \\
			&\ \bm{z}^\top \bm{x}_i =0, && \forall i\in [N]: \left|\bm{x}_i^\top \bm{w}_k\right| \leq R\cdot \delta.
		\end{alignat*}
		\EndFor
		\State \Return{$(u_1,\widehat{\bm{w}}_1, \dots, u_H,\widehat{\bm{w}}_H)$;}
		\EndProcedure
	\end{algorithmic}  
\end{algorithm} 

\begin{algorithm}[htb]  
	\caption{Robust Stationarity Test (General)}  
	\label{alg:robust}
	\begin{algorithmic}[1]  
		\Procedure{RTest}{\textsc{ETest}, \textsc{Rnd}, $u_1, \bm{w}_1, \dots, u_H,  \bm{w}_H$, $\delta$, $\bm{x}_1, \dots, \bm{x}_N$} %
		\State $(\widehat{u}_1, \widehat{\bm{w}}_1, \dots, \widehat{u}_H,\widehat{\bm{w}}_H)$ = \textsc{Rnd}$(u_1,\bm{w}_1, \dots, u_H, \bm{w}_H,\delta,\bm{x}_1, \dots, \bm{x}_N)$;
		\If{$\|(u_1,\bm{w}_1, \dots, u_H,\bm{w}_H)-(\widehat{u}_1,\widehat{\bm{w}}_1, \dots, \widehat{u}_H,\widehat{\bm{w}}_H)\| > \delta$}
		\State \Return $+\infty$;
		\EndIf
		\State \Return{\textsc{ETest}$(\widehat{u}_1, \widehat{\bm{w}}_1, \dots, \widehat{u}_H,  \widehat{\bm{w}}_H$, $\bm{x}_1, \dots, \bm{x}_N)$;}
		\EndProcedure
	\end{algorithmic}  
\end{algorithm} 

We define two constants that will be used in the analysis.
\begin{Definition}[Clarke]
	Given a point $(u_1^*, \bm{w}_1^*, \dots, u_H^*,\bm{w}_H^*)$ with a Euclidean norm $B \in [0,+\infty)$, we define the following constants about the separation and curvature of pieces around this point:
	\begin{itemize}
		\item Separation: $
		C_\tau^{\textnormal{Clarke}} \coloneqq \frac{1}{4R} \cdot \min \left\{ \left|\bm{x}_i^\top \bm{w}^*_k \right|: i \in [N], k \in [H], \bm{x}_i^\top \bm{w}_k^* \neq 0 \right\};
		$
		\item Curvature: $C_\mu^{\textnormal{Clarke}} \coloneqq \textnormal{poly}(B,R,L_\ell, L_{\ell'}, N, H)$.\footnote{See \Cref{sec:prf-thm-robust} for the exact value.}
	\end{itemize}
\end{Definition}
\begin{Remark}
If for any $i \in [N]$ and $k \in [H]$, it holds $\bm{x}_i^\top \bm{w}_k^*=0$, then we define the separation constant $C_\tau^{\textnormal{Clarke}} \coloneqq+\infty$, as in the optimization of extended-real-valued functions, $\inf \emptyset = +\infty$.
It is notable that, while the separation constant $C_\tau^{\textnormal{Clarke}}$ is usually unknown when running the testing algorithm, the curvature constant $C_\mu^{\textnormal{Clarke}}$ can be easily estimated when the candidate network and the radius $\delta$ are given.
\end{Remark}

\begin{Theorem}[Robust Clarke test]\label{thm:robust}
Let an $\epsilon$-Clarke stationary point $(u_1^*, \bm{w}_1^*, \dots, u_H^*,\bm{w}_H^*)$ satisfying SQ be given. For any $0 < \delta \leq C_\tau^{\textnormal{Clarke}}$ and any 
\[(u_1, \bm{w}_1, \dots, u_H,\bm{w}_H) \in \mathbb{B}_\delta\big( (u_1^*, \bm{w}_1^*, \dots, u_H^*,\bm{w}_H^*) \big),
\] if the output point $(\widehat{u}_1, \widehat{\bm{w}}_1, \dots, \widehat{u}_H,\widehat{\bm{w}}_H)$ of \Cref{alg:rounding} satisfies SQ, then we have
\[
\dist\Big( \bm{0}, \partial_C L(\widehat{u}_1, \widehat{\bm{w}}_1, \dots, \widehat{u}_H,\widehat{\bm{w}}_H) \Big) \leq \epsilon + C_\mu^{\textnormal{Clarke}}\cdot\delta.
\]
\end{Theorem}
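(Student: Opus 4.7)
The plan is to argue that \Cref{alg:rounding} correctly identifies the \emph{activation pattern} of the unknown target stationary point $(u_1^*,\bm{w}_1^*,\dots,u_H^*,\bm{w}_H^*)$, so that by \Cref{thm:reluMain-CR} (applied at both the target and the rounded point, both of which satisfy SQ by hypothesis) the Clarke subdifferentials of $L$ at the two points are described by the same explicit combinatorial formula. A term-by-term Lipschitz estimate then transports $\epsilon$-Clarke stationarity of the target into an $(\epsilon+O(\delta))$-bound at the rounded point.

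To verify pattern identification, I would first use that, since $\delta\leq C_\tau^{\textnormal{Clarke}}$, every inner product $\bm{x}_i^\top \bm{w}_k^*$ is either exactly $0$ or satisfies $|\bm{x}_i^\top \bm{w}_k^*|\geq 4R\delta$. Combined with $\|\bm{w}_k-\bm{w}_k^*\|\leq\delta$ and Cauchy--Schwarz this yields $|\bm{x}_i^\top\bm{w}_k-\bm{x}_i^\top\bm{w}_k^*|\leq R\delta$, and a short three-case analysis shows that (i) $\bm{w}_k^*$ is feasible for the QP defining $\widehat{\bm{w}}_k$, and (ii) every feasible point of this QP inherits the sign pattern of $\bm{w}_k^*$ on each $\bm{x}_i$. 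Consequently $\mathcal{I}_k^{+}(\widehat{\bm{w}}_k)=\mathcal{I}_k^{+}(\bm{w}_k^*)$, $\mathcal{I}_k^{-}(\widehat{\bm{w}}_k)=\mathcal{I}_k^{-}(\bm{w}_k^*)$, and $\mathbf{1}_{\widehat{\bm{w}}_k^\top\bm{x}_i>0}=\mathbf{1}_{\bm{w}_k^{*\top}\bm{x}_i>0}$ for every $i,k$. Feasibility of $\bm{w}_k^*$ together with optimality of the projection $\widehat{\bm{w}}_k$ gives $\|\widehat{\bm{w}}_k-\bm{w}_k\|\leq\|\bm{w}_k^*-\bm{w}_k\|\leq\delta$, hence $\|\widehat{\bm{w}}_k-\bm{w}_k^*\|\leq 2\delta$; and since the algorithm leaves the outer weights untouched, $\widehat{u}_k=u_k$ satisfies $|\widehat{u}_k-u_k^*|\leq\delta$ trivially.

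With SQ in force at both points, \Cref{thm:reluMain-CR} gives the explicit product representation of $\partial_C L$ at each. I would pick $\bm{g}^*\in\partial_C L(u_1^*,\bm{w}_1^*,\dots,u_H^*,\bm{w}_H^*)$ with $\|\bm{g}^*\|\leq\epsilon$, which is parametrized by some coefficients $\alpha_{j,k}^*\in[0,1]$ for $j\in\mathcal{I}_k^+\cup\mathcal{I}_k^-$, and construct a matching $\widehat{\bm{g}}\in\partial_C L(\widehat{u}_1,\widehat{\bm{w}}_1,\dots,\widehat{u}_H,\widehat{\bm{w}}_H)$ by reusing the same $\alpha_{j,k}^*$ while substituting in $(\widehat{u}_k,\widehat{\bm{w}}_k)$ and the updated losses $\widehat{\rho}_i$; the pattern match from the previous paragraph guarantees $\widehat{\bm{g}}$ lies in the correct set. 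Using $L_\ell$-Lipschitzness of $\ell_i$, $L_{\ell'}$-Lipschitzness of $\ell_i'$, the bounds $\|\bm{x}_i\|\leq R$ and $\|(u^*,\bm{w}^*)\|\leq B$, and $1$-Lipschitzness of $\max\{\cdot,0\}$, a short chain of triangle inequalities then yields $|\widehat{\rho}_i-\rho_i^*|=O(L_{\ell'}HBR\delta)$ and similarly $O(\delta)$-bounds on each scalar $u_k$-slot and each vector $\bm{w}_k$-slot of $\widehat{\bm{g}}-\bm{g}^*$. Summing the squares over $k\in[H]$ gives $\|\widehat{\bm{g}}-\bm{g}^*\|\leq C_\mu^{\textnormal{Clarke}}\delta$ with $C_\mu^{\textnormal{Clarke}}$ polynomial in $(B,R,L_\ell,L_{\ell'},N,H)$, from which $\dist(\bm{0},\partial_C L(\widehat{u}_1,\widehat{\bm{w}}_1,\dots,\widehat{u}_H,\widehat{\bm{w}}_H))\leq\|\widehat{\bm{g}}\|\leq\epsilon+C_\mu^{\textnormal{Clarke}}\delta$ follows at once. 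The main obstacle is precisely this final bookkeeping: the $u_k$-slot is a sum over $N$ data points of a triple product ($\rho_i$, sign indicator, $\max\{\bm{w}_k^\top\bm{x}_i,0\}$) and the $\bm{w}_k$-slot a sum of quadruple products involving $u_k\rho_i\bm{x}_i$, so every $O(\delta)$-perturbation propagates through multiple multiplicative factors and one has to verify that the resulting constant remains polynomial and uniform across all $H$ blocks simultaneously.
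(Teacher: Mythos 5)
Your proposal is correct and follows essentially the same route as the paper's proof: identify the activation pattern via the separation constant and feasibility of $\bm{w}_k^*$ in the rounding QP, then transport a norm-$\epsilon$ subgradient from the target to the rounded point by reusing the same convex-combination coefficients in the explicit formula of \Cref{thm:reluMain-CR} and bounding the perturbation of $\rho_i$, the $u_k$-slots, and the $\bm{w}_k$-slots by $O(\delta)$. The only (immaterial) overstatement is that the sign-pattern argument yields equality of the unions $\mathcal{I}_k^+\cup\mathcal{I}_k^-$ at $\widehat{\bm{w}}_k$ and $\bm{w}_k^*$, not of $\mathcal{I}_k^+$ and $\mathcal{I}_k^-$ separately (the latter depend on the sign of $u_k\rho_i$, which may differ), but the Clarke set $G_k^C$ depends only on the union, so the argument goes through unchanged.
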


In \Cref{thm:robust}, we show that for a point that is sufficiently closed to an $\epsilon$-Clarke stationary one, and a properly chosen parameter $\delta > 0$, one can correctly certify the near-approximate stationarity of this point in the style as if the function $L$ is smooth by calling \Cref{alg:robust} with $\textsc{RTest}(\textsc{ETest-C}, \textsc{Rnd-C}, \cdots)$. A natural question here is how to choose a proper parameter $\delta$, as the separation constant $C_\tau^{\textnormal{Clarke}}$ is usually unknown. It turns out that a simple line search will work for that.
\begin{Remark}[Line search]
Set the initial value of radius $\delta$ to, say, $\delta_0=1$. Then, in the $t$-th iteration, run \Cref{alg:robust} with parameter $\delta_t$ and set $\delta_{t+1} = \delta_t /2$. Note that for a sufficiently small $\delta$, the rounding scheme in \Cref{alg:rounding} becomes superfluous, as for any $i\in[N]$ and $k\in[H]$ such that $\bm{x}^\top_i \bm{w}_k \neq \bm{0}$, we have $|\bm{x}_i^\top \bm{w}_k| > 2R\cdot \delta$ for a small $\delta$. Therefore, we can stop the line search within at most
\[
\left\lceil\log_2\left( 2R\left/ \min \left\{ \left|\bm{x}_i^\top \bm{w}_k \right|: i \in [N], k \in [H], \bm{x}_i^\top \bm{w}_k \neq \bm{0} \right. \right\} \right) \right\rceil
\]
iterations. It is immediate that, if $(u_1, \bm{w}_1, \dots, u_H,\bm{w}_H) \in \mathbb{B}_{C_\tau^{\textnormal{Clarke}}/2}\big( (u_1^*, \bm{w}_1^*, \dots, u_H^*,\bm{w}_H^*)\big)$, then there exists a radius $\delta_t \in [C_\tau^{\textnormal{Clarke}}/2, C_\tau^{\textnormal{Clarke}}]$ in the iteration sequence such that
\[
\dist\Big( \bm{0}, \partial_C L(\widehat{u}_1, \widehat{\bm{w}}_1, \dots, \widehat{u}_H,\widehat{\bm{w}}_H) \Big) \leq \epsilon + C_\mu^{\textnormal{Clarke}}\cdot\delta_t.
\]
This search scheme also works for the Fr\'echet NAS test and we will not repeat that.
\end{Remark}

\subsubsection{Testing Fr\'echet NAS}

\begin{algorithm}[htp]  
	\caption{Neural Rounding (Fr\'echet)}  
	\label{alg:rounding-f}
	\begin{algorithmic}[1]  
		\Procedure{Rnd-F}{$\bm{w}_1, \dots, \bm{w}_H$, $\delta$, $\bm{x}_1, \dots, \bm{x}_N$} %
		\State compute $R = \max_{1 \leq i \leq N} \|\bm{x}_i \|$ and $C_u=L_{\ell'}(4HRB^2+1)$;
		\For{$k\in\{1,\dots,H\}$}
		\State compute $\widehat{\bm{w}}_k$ by solving the following QP %
		\begin{alignat*}{2}
			\widehat{\bm{w}}_k = \argmin_{ \bm{z} \in \mathbb{R}^d} &\ \| \bm{z} - \bm{w}_k \|^2 \\
			\textnormal{s.t.}\ \  &\ \bm{z}^\top \bm{x}_i \geq 2R\cdot\delta, && \forall i\in [N]: \bm{x}_i^\top \bm{w}_k > R\cdot \delta, \\
			&\ \bm{z}^\top \bm{x}_i \leq -2R\cdot\delta, \qquad&& \forall i\in [N]: \bm{x}_i^\top \bm{w}_k < -R\cdot \delta, \\
			&\ \bm{z}^\top \bm{x}_i =0, && \forall i\in [N]: \left|\bm{x}_i^\top \bm{w}_k\right| \leq R\cdot \delta.
		\end{alignat*}
		\State set $\widehat{u}_k =  u_k$;
		\If{$\min_{i:\bm{w}_k^\top\bm{x}_i = 0}u_k\cdot \rho_i\leq 2C_u\cdot \delta$}
		\State set $\widehat{u}_k = 0$;
		\EndIf
		\EndFor
		\State \Return{$(\widehat{u}_1,\widehat{\bm{w}}_1, \dots, \widehat{u}_H,\widehat{\bm{w}}_H)$;}
		\EndProcedure
	\end{algorithmic}  
\end{algorithm}

Unlike the Clarke case, we need the following extra  nondegeneracy condition on $\ell_i$ to identify the pattern of $\{u_k^*\}_k$ and avoid the Fr\'echet subdifferential being empty.

\begin{Assumption}\label{assu:frechet-nondegenerate}
Given a point $(u_1^*, \bm{w}_1^*, \dots, u_H^*,\bm{w}_H^*)$, we assume that for any $i\in[N]$ such that $\min_{k\in[H]} |\bm{x}_i^\top\bm{w}_k^*| = 0$, we have $\ell_i'\left( \sum_{k=1}^H u_k^*\cdot \max\left\{ (\bm{w}_k^*)^\top \bm{x}_i, 0 \right\} \right) \neq 0$.
\end{Assumption}

The following two constants will be used in the analysis.
\begin{Definition}[Fr\'echet]
	Given a point $(u_1^*, \bm{w}_1^*, \dots, u_H^*,\bm{w}_H^*)$ with a Euclidean norm $B \in [0,+\infty)$, we define two constants concerning the separation and curvature of pieces around this point:
	\begin{itemize}
		\item Separation: $%
		C_{\tau}^{\textnormal{Fr\'echet}} \coloneqq \min\left\{ \min_{\substack{i \in [N], k \in [H],\\ \bm{x}_i^\top \bm{w}_k^* \neq 0 \\}} \frac{\left|\bm{x}_i^\top \bm{w}^*_k \right|}{4R}, \min_{\substack{i \in [N], k \in [H], \\ \bm{x}_i^\top \bm{w}_k^* = 0, u_k^*\cdot \rho_i^* > 0 }} \frac{u_k^*\cdot \rho_i^*}{L_{\ell'}(4HRB^2+1)}\right\};
			$	
		\item Curvature: $C_\mu^{\textnormal{Fr\'echet}} \coloneqq \textnormal{poly}(B,R,L_\ell, L_{\ell'}, N, H)$.\footnote{See \Cref{sec:appd-test-f} for the exact value.}
	\end{itemize}
\end{Definition}

Then, for Fr\'echet NAS test, we have the following result similar to \Cref{thm:robust}.

\begin{Theorem}[Robust Fr\'echet test]\label{thm:robust-f}
Let an $\epsilon$-Fr\'echet stationary point $(u_1^*, \bm{w}_1^*, \dots, u_H^*,\bm{w}_H^*)$ satisfying SQ be given. For any $0 < \delta \leq C_\tau^{\textnormal{Fr\'echet}}$ and any 
\[(u_1, \bm{w}_1, \dots, u_H,\bm{w}_H) \in \mathbb{B}_\delta\big( (u_1^*, \bm{w}_1^*, \dots, u_H^*,\bm{w}_H^*) \big),
\] if the output point $(\widehat{u}_1, \widehat{\bm{w}}_1, \dots, \widehat{u}_H,\widehat{\bm{w}}_H)$ of \Cref{alg:rounding-f} satisfies SQ, then we have
\[
\dist\Big( \bm{0}, \widehat{\partial} L(\widehat{u}_1, \widehat{\bm{w}}_1, \dots, \widehat{u}_H,\widehat{\bm{w}}_H) \Big) \leq \epsilon + C_\mu^{\textnormal{Fr\'echet}}\cdot\delta.
\]
\end{Theorem}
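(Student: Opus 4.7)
My plan is to adapt the strategy of Theorem~\ref{thm:robust} to the Fr\'echet setting, with two additional subtleties specific to the Fr\'echet subdifferential: Theorem~\ref{thm:reluMain-CR-frechet} requires $G_k^F(\widehat{\bm{w}}_k) \neq \emptyset$, which demands $\mathcal{I}_k^-(\widehat{\bm{w}}_k) = \emptyset$ at the rounded point; and the rounding of $u_k$ in Algorithm~\ref{alg:rounding-f} must be calibrated against the separation of the products $u_k^* \rho_i^*$ on kink data. Using the $\epsilon$-Fr\'echet stationarity, SQ, and Theorem~\ref{thm:reluMain-CR-frechet} at the reference point, I will first fix an element $\bm{s}^* \in \widehat{\partial} L(u_1^*,\bm{w}_1^*,\dots,u_H^*,\bm{w}_H^*)$ with $\|\bm{s}^*\| \leq \epsilon$, whose block-product structure follows from $\prod_k \{\sum_i \rho_i^* \max\{(\bm{w}_k^*)^\top \bm{x}_i, 0\}\} \times G_k^F(\bm{w}_k^*)$.

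The first step will mirror the Clarke rounding analysis verbatim: the first term of $C_\tau^{\textnormal{Fr\'echet}}$ ensures that for every datum with $\bm{x}_i^\top \bm{w}_k^* \neq 0$ the gap $|\bm{x}_i^\top \bm{w}_k^*| > 4R\delta$ combined with $\|\bm{w}_k - \bm{w}_k^*\| \leq \delta$ makes $\bm{w}_k^*$ feasible for the QP in Algorithm~\ref{alg:rounding-f}. The projection property then yields $\|\widehat{\bm{w}}_k - \bm{w}_k^*\| \leq 2\delta$, and the activation pattern of $\widehat{\bm{w}}_k$ exactly matches that of $\bm{w}_k^*$; in particular $\bm{x}_i^\top \widehat{\bm{w}}_k = 0 \iff \bm{x}_i^\top \bm{w}_k^* = 0$ with matching signs otherwise.

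The second step will analyze $\widehat{u}_k$ and establish $\mathcal{I}_k^-(\widehat{\bm{w}}_k) = \emptyset$. Nonemptiness of $\widehat{\partial} L$ at the reference point forces $\mathcal{I}_k^-(\bm{w}_k^*) = \emptyset$, so $u_k^* \rho_i^* \geq 0$ on kinks; by Assumption~\ref{assu:frechet-nondegenerate}, either $u_k^* = 0$ or $u_k^* \rho_i^* > 0$, with the second term of $C_\tau^{\textnormal{Fr\'echet}}$ yielding $u_k^* \rho_i^* \geq C_u \cdot C_\tau^{\textnormal{Fr\'echet}}$ in the latter case. A routine Lipschitz estimate on $\rho_i$ via $L_\ell, L_{\ell'}, R, B, H$ gives $|u_k \rho_i - u_k^* \rho_i^*| \leq C_u \delta$, so either (a) $u_k^* = 0$, where $|u_k| \leq \delta$ forces the threshold check to trigger $\widehat{u}_k = 0$; or (b) $u_k^* \rho_i^* \geq C_u C_\tau^{\textnormal{Fr\'echet}}$, where $u_k \rho_i$ stays safely above $2C_u \delta$ so the algorithm keeps $\widehat{u}_k = u_k$ with $u_k \rho_i > 0$. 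In either case $\mathcal{I}_k^+(\widehat{\bm{w}}_k) = \mathcal{I}_k^+(\bm{w}_k^*)$, $\mathcal{I}_k^-(\widehat{\bm{w}}_k) = \emptyset$, and $|\widehat{u}_k - u_k^*| \leq \delta$.

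The final step will construct a candidate $\widehat{\bm{s}} \in \widehat{\partial} L(\widehat{u}_1,\widehat{\bm{w}}_1,\dots,\widehat{u}_H,\widehat{\bm{w}}_H)$ by invoking Theorem~\ref{thm:reluMain-CR-frechet} at the rounded point (SQ holds there by hypothesis), reusing the convex mixing coefficients of $\bm{s}^*$ on kink indices, and substituting $\widehat{u}_k, \widehat{\bm{w}}_k$ and the corresponding $\rho_i$ in every other term. Blockwise Lipschitz bookkeeping will yield $\|\widehat{\bm{s}} - \bm{s}^*\| \leq C_\mu^{\textnormal{Fr\'echet}} \cdot \delta$, with $C_\mu^{\textnormal{Fr\'echet}}$ absorbing polynomial dependencies on $B, R, L_\ell, L_{\ell'}, N, H$; the triangle inequality with $\|\bm{s}^*\| \leq \epsilon$ then delivers the claimed bound. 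The main obstacle will be the coupled bookkeeping of the two separation scales in $C_\tau^{\textnormal{Fr\'echet}}$, namely the geometric gap on $\bm{x}_i^\top \bm{w}_k^*$ and the algebraic gap on $u_k^* \rho_i^*$, since a single radius $\delta$ must trigger the correct behavior simultaneously in both the QP rounding of $\bm{w}_k$ and the thresholded rounding of $u_k$; this is precisely what the two-term definition of $C_\tau^{\textnormal{Fr\'echet}}$ is designed to reconcile.
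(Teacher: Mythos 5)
Your proposal is correct and follows essentially the same route as the paper's proof: fix a near-zero Fr\'echet subgradient at the reference point via Theorem~\ref{thm:reluMain-CR-frechet}, show the QP rounding reproduces the activation pattern of $\bm{w}_k^*$ using the geometric part of $C_\tau^{\textnormal{Fr\'echet}}$, use the algebraic part together with Assumption~\ref{assu:frechet-nondegenerate} to show the threshold on $u_k\rho_i$ correctly distinguishes $u_k^*=0$ from $u_k^*\rho_i^*>0$ (hence $\mathcal{I}_k^-(\widehat{\bm{w}}_k)=\emptyset$ and the Fr\'echet subdifferential is nonempty), and finally transplant the subgradient with the same convex coefficients and bound the difference by Lipschitz estimates. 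The only cosmetic difference is that the paper phrases the $u_k^*=0$ case through the bound $|u_k\rho_i - u_k^*\rho_i^*|\leq C_u\delta$ on the product rather than on $|u_k|$ alone, but this is the same estimate.
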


\appendix
\crefalias{section}{appendix}
\section{Abs-Normal Form of Piecewise Differentiable Functions}\label{sec:abs-norm-form}
We briefly review the abs-normal representation of a subclass of piecewise differentiable functions. See \cite{griewank2013stable,griewank2016first} for details.
\subsection{The General Framework}
The abs-normal representation \cite{griewank2013stable} is a piecewise linearization scheme concerning a certain subclass of piecewise differentiable functions in the sense of \citet{scholtes2012introduction}. In this subclass, functions are defined as compositions of smooth functions and the absolute value function. By identifies $\max\{a,b\}=(a+b)/2+|a-b|/2, \min\{a,b\}=(a+b)/2-|a-b|/2,$ and $\max\{x,0\}=x/2+|x|/2$, composition with these nonsmooth elemental functions can also be represented in the abs-normal form.

Let $\varphi:\mathbb{R}^d\rightarrow\mathbb{R}$ be a function in such subclass. By numbering all input to the absolute value functions in the evaluation order as ``switching variables'' $z_i$ for $i\in \{1, \dots, s\}$, the function $\bm{x}\mapsto y=\varphi(\bm{x})$ can be written in the following abs-normal form:
\[
\bm{z} = F(\bm{x}, \bm{p}),\qquad y = f(\bm{x}, \bm{p}),
\]
where $\bm{x}\in\mathbb{R}^d, \bm{p}\in\mathbb{R}_+^s$, the smooth mapping $F:\mathbb{R}^d\times \mathbb{R}^s_+ \rightarrow \mathbb{R}^s$, and the smooth function $f:\mathbb{R}^d\times \mathbb{R}^s \rightarrow \mathbb{R}$. As the numbering of $\{z_i\}_i$ is in the evaluation order, $z_i$ is a function of $z_j$ only if $j < i$. In sum, we have
\[
y = \varphi(\bm{x}) = f(\bm{x}, |\bm{z}(\bm{x})|),
\]
where $\bm{z}(\bm{x})$ a successive evaluation of $\{z_i\}_{i=1}^s$ with given $\bm{x}$. To see such an evaluation of $\bm{z}(\bm{x})$ is well-defined, note that $z_1 = F_1(\bm{x})$ and for any $1 < i \leq s$,
\[ 
z_i = F_i(\bm{x}, |z_1|, \cdots, |z_{i-1}|).
\]
We remark that, similar to the Difference of Convex (DC) decomposition in DC programming, the function $\varphi$ may have many different abs-normal decomposition. The following vectors and matrices are useful when study the function in abs-normal form:
\begin{alignat*}{2}
\bm{a} &\coloneqq \frac{\partial}{\partial \bm{x}} f(\bm{x},\bm{p}) \in \mathbb{R}^d, \qquad
&&\bm{Z} \coloneqq \frac{\partial}{\partial \bm{x}} F(\bm{x},\bm{p}) \in \mathbb{R}^{s\times d}, \\
\bm{b} &\coloneqq \frac{\partial}{\partial \bm{p}} f(\bm{x},\bm{p}) \in \mathbb{R}^s, 
&&\bm{L} \coloneqq \frac{\partial}{\partial \bm{p}} F(\bm{x},\bm{p}) \in \mathbb{R}^{s\times s}.
\end{alignat*}
For any $\bm{\sigma} \in \{-1,1\}^s$, we will denote by $\bm{\Sigma } \coloneqq \diag(\bm{\sigma }) \in \{-1,0,1\}^{s\times s}$. Let us define (see also \cite[Equation (11)]{griewank2016first})
\[
\nabla \bm{z}^\sigma \coloneqq (\bm{I} - \bm{L\Sigma})^{-1}\bm{Z} \in \mathbb{R}^{s\times d},
\]
which will play a key role in the definition of LIKQ (see \Cref{def:detailed-regularity}).
\subsection{Abs-Normal Form of Shallow ReLU Networks}\label{sec:abs-relu}
We rewrite the empirical loss of the shallow ReLU network with absolute value functions as
\[
L(u_1, \bm{w}_1, \cdots, u_H, \bm{w}_H)=\sum_{i=1}^N \ell_i\left( \sum_{k=1}^H \frac{u_k}{2} \cdot \left( \bm{w}_k^\top \bm{x}_i + \left|\bm{w}_k^\top \bm{x}_i\right|  \right) \right).
\]
Then, as there are $N\cdot H$ absolute value evaluations in total, we define the switching variable $\bm{z}\in\mathbb{R}^{NH}$ and the smooth mapping $F$ as
\[
z_{N(k-1)+i} = F_{N(k-1)+i}(u_1, \bm{w}_1, \cdots, u_H, \bm{w}_H) = \bm{w}_k^\top \bm{x}_i, \qquad \forall k \in [H], i \in [N].
\]
The smooth function $f$ in the abs-normal form can be written as
\[
y= f(u_1, \bm{w}_1, \cdots, u_H, \bm{w}_H, \bm{p}) = \sum_{i=1}^N \ell_i\left( \sum_{k=1}^H \frac{u_k}{2} \cdot \left( \bm{w}_k^\top \bm{x}_i + p_{N(k-1)+i}  \right) \right),
\]
where $\bm{p} \in \mathbb{R}_+^{NH}$. Consequently, the matrix $\bm{L}=\bm{0}$, which implies the function $L$ is ``simply switched'' in the sense of \citet{griewank2016first}. For the matrix $\bm{Z}$ and any $k \in [H], i \in [N]$, the $(N(k-1)+i)$-th row of $\bm{Z} \in \mathbb{R}^{NH\times H(d+1)}$ can be written as
\[
\prod_{k'=1}^H 0 \times \bm{1}_{k'=k}\cdot \bm{x}_i^\top \in \mathbb{R}^{1\times H(d+1)}.
\]

\section{Proofs for \Cref{sec:hardness}}

\subsection{The Problems}
\begin{Problem}[3SAT]\label{prob:3sat}
	Given a collection of clauses $\{C_i(\bm{x})\}_{i=1}^n$ on Boolean variables $\bm{x} \in \{0,1\}^m$ such that clause $C_i(\bm{x})$ is limited to a disjunction of at most three literals for any $1 \leq i \leq n$. Let the following formula of $C(\bm{x})$ in conjunctive normal form be given
	\[
	C(\bm{x}) \coloneqq \bigwedge_{i=1}^n  C_i(\bm{x}).
	\]
	Is there an $\bm{x} \in \{0,1\}^m$ satisfying $C(\bm{x}) = 1$? 
\end{Problem}

\begin{Problem}[Piecewise Linear Test, PLT]\label{prob:plt} Suppose $\epsilon \in [0,\frac{1}{\sqrt{m}})$ and 
 the input data $\{\bm{y}_i\}_{i=1}^{3n} \subseteq \mathbb{Z}^{m}$ be given. Let us define a function $f_{\textsf{PLT}}: \mathbb{R}^m \rightarrow \mathbb{R}$ as
	\[
	f_{\textsf{PLT}}(\bm{d})\coloneqq \max_{1\leq i \leq n}- \sum_{j=1}^3 \max\left\{ \bm{d}^\top \bm{y}_{3(i-1)+j},0 \right\}.
	\]
	Is there a vector  $\bm{g} \in \mathbb{R}^m$ satisfying $ \|\bm{g}\| \leq \epsilon$ and 
	\[
	f_{\textsf{PLT}}(\bm{d}) \geq \langle \bm{g}, \bm{d} \rangle, \quad \forall \bm{d} \in \mathbb{R}^m? \tag{PLT}
	\]
	Its complement is given by
	\[
	\forall \bm{g} \in \mathbb{B}_\epsilon(\bm{0}), \exists \bm{d} \in \mathbb{R}^m: f_{\textsf{PLT}}(\bm{d}) < \langle \bm{g}, \bm{d} \rangle. \tag{$\overline{\textnormal{PLT}}$}
	\]
\end{Problem}

\begin{Problem}[Neural Network Test, NNT]\label{prob:nnt} Suppose $\epsilon \in [0, \frac{1}{\sqrt{m}}]$.
	Let the input data $\bm{Y}=\left[ \begin{array}{c|c|c}
		\bm{y}_1 & \cdots & \bm{y}_{3n} \end{array} \right]\subseteq \mathbb{Z}^{m\times 3n}$ be given. Let us define  $f_{\textsf{NNT}}:\mathbb{R}^{3n}\times\mathbb{R}^m \rightarrow \mathbb{R}$ as
	\[
	f_{\textsf{NNT}}(\bm{u}, \bm{w})\coloneqq \max_{1\leq i \leq n} \sum_{j=1}^3 u_{3(i-1)+j} \cdot \max\left\{ \bm{w}^\top \bm{y}_{3(i-1)+j},0 \right\}.
	\]
	Is $(-\mathbf{1}_{3n}, \bm{0}_m)$ an $\epsilon$-Fr\'echet stationary point of $f_{\textsf{NNT}}$, i.e., $\dist\big(\bm{0}, \widehat{\partial} f_{\textsf{NNT}}(-\mathbf{1}_{3n}, \bm{0}_m)\big) \leq \epsilon$?%
\end{Problem}

\begin{Problem}[Abs-Normal Form Test, ANFT]\label{prob:anft}
	Suppose a piecewise linear function is given in the abs-linear form with vectors and matrices $\bm{a}\in\mathbb{R}^n, \bm{b}\in\mathbb{R}^s, \bm{Z} \in \mathbb{R}^{s\times n}, \bm{L}\in\mathbb{R}^{s\times s}$. Is there a definite signature vector $\bm{\sigma} \in \{ -1, 1 \}^s$ such that the following system with respect to $\bm{\mu}_\sigma \in \mathbb{R}^s$ is incompatible 
	\[
	\bm{a}^\top + (\bm{b} - \bm{\mu}_\sigma)^\top \big(\diag(\bm{\sigma}) - \bm{L}\big)^{-1} \bm{Z} = 0, \qquad 0\leq \bm{\mu}_\sigma \in \mathbb{R}^s?
	\]
\end{Problem}

\subsection{Hardness of Piecewise Linear Test}

\begin{Lemma}\label{lem:plt-np-hard}
	\Cref{prob:plt} (PLT) is co-NP-hard.
\end{Lemma}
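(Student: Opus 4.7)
}

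The plan is to exhibit a polynomial-time reduction from 3SAT (\Cref{prob:3sat}) to the complement $\overline{\textnormal{PLT}}$, which will establish the co-NP-hardness of PLT. Given an instance of 3SAT with $m$ Boolean variables $\bm{x}\in\{0,1\}^m$ and $n$ three-literal clauses $\{C_i\}_{i=1}^n$, I propose to set the PLT dimension to $m$, the number of data blocks to $n$, and construct the $3n$ integer data vectors encoding each literal by
\[
\bm{y}_{3(i-1)+j} \;=\; \begin{cases} \ \bm{e}_k, & \textnormal{if } \ell_{i,j}=x_k, \\ -\bm{e}_k, & \textnormal{if } \ell_{i,j}=\neg x_k, \end{cases}
\]
for $i\in[n]$ and $j\in\{1,2,3\}$. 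The key identification is that under a $\pm1$ assignment $\bm{d}\in\{\pm 1\}^m$ (interpreting $d_k=+1$ as $x_k$ true), we have $\bm{d}^\top\bm{y}_{3(i-1)+j}=+1$ when literal $\ell_{i,j}$ is true and $-1$ when it is false. So $\max\{\bm{d}^\top\bm{y}_{3(i-1)+j},0\}$ is the indicator of literal $\ell_{i,j}$ being true. Pick any admissible $\epsilon\in[0,1/\sqrt{m})$; this will produce a PLT instance computable in polynomial time.

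Next I would verify the equivalence ``3SAT satisfiable $\iff$ $\overline{\textnormal{PLT}}$ holds''. For the ``$\Rightarrow$'' direction, suppose $\bm{x}^\star$ satisfies 3SAT and let $\bm{d}^\star\coloneqq 2\bm{x}^\star-\mathbf{1}_m\in\{\pm 1\}^m$. Every clause has at least one true literal, so $\sum_{j=1}^3\max\{(\bm{d}^\star)^\top\bm{y}_{3(i-1)+j},0\}\geq 1$ for all $i$, hence $f_{\textsf{PLT}}(\bm{d}^\star)\leq -1$. For any $\bm{g}\in\mathbb{B}_\epsilon(\bm 0)$, Cauchy--Schwarz yields $\langle\bm g,\bm d^\star\rangle\geq -\epsilon\sqrt{m}>-1\geq f_{\textsf{PLT}}(\bm d^\star)$, so the PLT inequality fails at $\bm d^\star$ for every admissible $\bm g$, which is exactly $\overline{\textnormal{PLT}}$. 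For the ``$\Leftarrow$'' direction I will show that $\bm g=\bm 0$ already certifies PLT when 3SAT is unsatisfiable. Since each clause term $g_i(\bm d)=-\sum_j\max\{\bm d^\top\bm y_{3(i-1)+j},0\}\leq 0$, I only need $f_{\textsf{PLT}}(\bm d)\geq 0$ for all $\bm d\in\mathbb{R}^m$. Fix any $\bm d$ and build an assignment $\sigma\in\{\pm 1\}^m$ by $\sigma_k\coloneqq +1$ if $d_k\geq 0$ and $\sigma_k\coloneqq -1$ otherwise. Unsatisfiability gives a clause $i$ falsified by $\sigma$: every literal $\ell_{i,j}=x_k$ in it has $\sigma_k=-1$ (hence $d_k<0$, so $\bm d^\top\bm y_{3(i-1)+j}=d_k\leq 0$), and every literal $\ell_{i,j}=\neg x_k$ has $\sigma_k=+1$ (hence $d_k\geq 0$, so $\bm d^\top\bm y_{3(i-1)+j}=-d_k\leq 0$). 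Thus $g_i(\bm d)=0$, giving $f_{\textsf{PLT}}(\bm d)\geq 0$ and hence $f_{\textsf{PLT}}(\bm d)=0$, which certifies PLT with $\bm g=\bm 0$.

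The step I expect to require the most care is the ``unsat $\Rightarrow$ PLT'' direction, specifically the treatment of coordinates with $d_k=0$, since the encoding uses strict satisfaction of literals. The tie-breaking rule $\sigma_k=+1$ if $d_k\geq 0$ handles this cleanly: when $d_k=0$ the corresponding term vanishes regardless of sign choice, so the argument does not care which way we extend $\sigma$, but we must use the same $\sigma$ for all literals of the chosen clause. Everything else—$3$-Lipschitzness of $f_{\textsf{PLT}}$ (each piece is a sum of three hinge losses in directions $\pm\bm e_k$, giving gradient norm at most $3$), integrality of data, and polynomial size of the reduction—is immediate from the construction, so the reduction will be a valid polynomial-time many-one reduction $\textnormal{3SAT}\leq_p\overline{\textnormal{PLT}}$.
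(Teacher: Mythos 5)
Your proposal is correct and is essentially the paper's own reduction: the same encoding of literals as $\pm\bm{e}_k$, the same $\pm 1$ witness direction with the Cauchy--Schwarz bound $\langle\bm{g},\bm{d}\rangle\geq-\epsilon\sqrt{m}>-1$ for the satisfiable case, and the same observation that $\bm{g}=\bm{0}$ certifies PLT in the unsatisfiable case (you argue both directions in contrapositive form relative to the paper, but the witnesses and computations are identical).
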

\begin{proof}
We have to show that $\overline{\textnormal{PLT}}$  is an element of the complexity class NP-hard.
3SAT in \Cref{prob:3sat} is known to be strongly NP-complete \cite{garey1979computers}. We give a polynomial-time reduction from 3SAT to $\overline{\textnormal{PLT}}$. Given any instance of 3SAT, we get clauses $\{C_i(\bm{x})\}_{i=1}^n$ for $\bm{x}\in \{0,1\}^m$. We will refer literals in $C_t(\bm{x})$ by their positions. For example, given $C_t(\bm{x}) = x_i \vee (^\neg x_j) \vee x_k$, we say the literal $x_i$ occurs in $C_t(\bm{x})$ at position $1$, the literal $^\neg x_j$ occurs in $C_t(\bm{x})$ at position $2$, and the literal $x_k$ occurs in $C_t(\bm{x})$ at position $3$. We construct the data $\{\bm{y}_i\}_{i=1}^{3n} \subseteq \mathbb{Z}^m$ as follows
\[
\bm{y}_i =     \left\{ \begin{array}{rcl}
         \bm{e}_k & \mbox{if}
         & \textnormal{Boolean } x_k \textnormal{ occurs in } C_{\lfloor (i-1)/3 \rfloor+1}(\bm{x}) \textnormal{ at position } i - 3\lfloor (i-1)/3 \rfloor \\ 
         -\bm{e}_k & \mbox{if}
         & \textnormal{Boolean } ^\neg x_k \textnormal{ occurs in } C_{\lfloor (i-1)/3 \rfloor+1}(\bm{x}) \textnormal{ at position } i - 3\lfloor (i-1)/3 \rfloor 
                \end{array}\right..
\]
Note the following positive $1$-homogeneous function in the construction of PLT
\[
	f_{\textsf{PLT}}(\bm{d})= \max_{1\leq i \leq n} -\sum_{j=1}^3 \max\left\{ \bm{d}^\top \bm{y}_{3(i-1)+j},0 \right\}.
	\]
	
Suppose that for any $0\leq \|\bm{g}\| \leq \epsilon$, there exists $\bm{d} \in \mathbb{R}^m$ such that $f_{\textsf{PLT}}(\bm{d}) < \langle \bm{g}, \bm{d} \rangle$. We will exhibit an $\bm{x} \in \{0,1\}^m$ such that the given 3SAT is satisfied.
Let $\bm{g} = \bm{0}$ and there  exists $\bm{d} \in \mathbb{R}^m$ such that $f_{\textsf{PLT}}(\bm{d}) < 0$.
 For any $i \in [m]$, let
\[
x_i = \left\{ \begin{array}{rcl}
         1 & \mbox{if}
         & d_i > 0 \\ 
         0 & \mbox{if}
         & d_i \leq 0
                \end{array}\right..
\]
We show $C(\bm{x})=1$. By $f_{\textsf{PLT}}(\bm{d}) < 0$, we get for any $i \in [n]$
\[
\sum_{j=1}^3 \max\left\{ \bm{d}^\top \bm{y}_{3(i-1)+j},0 \right\} > 0,
\]
which implies that there exists a $j' \in \{1,2,3\}$ such that $\bm{d}^\top \bm{y}_{3(i-1)+j'} > 0$. 
Let the index of the Boolean literal occurs in  $C_i(\bm{x})$ at position $j'$ be $k$. Now we consider two cases. If $x_k$ occurs in  $C_i(\bm{x})$ at position $j'$, then $\bm{y}_{3(i-1)+j'} = \bm{e}_k$. We get $\bm{d}^\top \bm{y}_{3(i-1)+j'} = \bm{d}^\top\bm{e}_k = d_k > 0$. So, by definition, $x_k = 1$ which implies $C_i(\bm{x}) = 1$. Otherwise, if $^\neg x_k$ occurs in  $C_i(\bm{x})$ at position $j'$, then $\bm{y}_{3(i-1)+j'} = -\bm{e}_k$. We get $\bm{d}^\top \bm{y}_{3(i-1)+j'} = -\bm{d}^\top\bm{e}_k = -d_k > 0$. So $^\neg x_k = 1$ by definition, which implies $C_i(\bm{x}) = 1$. This shows that $C(\bm{x})=\bigwedge_{i=1}^n  C_i(\bm{x}) = 1$ and the given 3SAT is satisfied.

Conversely, we show that if there exists a vector $\bm{g}$ such that $0\leq \|\bm{g}\| \leq \epsilon$ and  $\inf_{\bm{d}} f_{\textsf{PLT}}(\bm{d}) \geq \langle \bm{g}, \bm{d} \rangle$, then 3SAT cannot be satisfied. Suppose to the contrary that there exists $\bm{x} \in \{0,1\}^m$ such that $C(\bm{x}) = 1$. 
For any $i \in [m]$, let 
\[
d_i = \left\{ \begin{array}{rcl}
         1 & \mbox{if}
         & x_i = 1 \\ 
         -1 & \mbox{if}
         & x_i = 0
                \end{array}\right..
\]
As $\bigwedge_{i=1}^n  C_i(\bm{x}) = 1$, for any $i \in [n]$, there exists a literal of clause $C_i(\bm{x})$ that is satisfied. Let the index of this literal be $k'$ and the position of it in $C_i(\bm{x})$ be $j'$. We consider two cases. If literal $x_{k'}$ occurs in $C_i(\bm{x})$ at position $j'$, then $\bm{y}_{3(i-1)+j'}=\bm{e}_{k'}$. As $C_i(\bm{x})=1$ due to literal $x_{k'}$, we get  $x_{k'} = 1$ and $d_{k'}=1$ by definition. Then, for such $i\in[n]$, we get
\[
\sum_{j=1}^3 \max\left\{ \bm{d}^\top \bm{y}_{3(i-1)+j},0 \right\} \geq \max\left\{ \bm{d}^\top \bm{y}_{3(i-1)+j'},0 \right\} = 
  \max\{d_{k'},0\} = 1.
\] 
Otherwise, if literal $^\neg x_{k'}$ occurs in $C_i(\bm{x})$ at position $j'$, then $\bm{y}_{3(i-1)+j'}=-\bm{e}_{k'}$. As $C_i(\bm{x})=1$ due to literal $^\neg x_{k'}$, we get  $x_{k'} = 0$ and $d_{k'}=-1$ by definition. Then, for any $i\in[n]$, we get
\[
\sum_{j=1}^3 \max\left\{ \bm{d}^\top \bm{y}_{3(i-1)+j},0 \right\} \geq \max\left\{ \bm{d}^\top \bm{y}_{3(i-1)+j'},0 \right\} = 
  \max\{-d_{k'},0\} = 1.
\] 
This gives 
\[
\langle \bm{g}, \bm{d} \rangle \leq f_{\textsf{PLT}}(\bm{d}) \leq -1 < -\epsilon\cdot \sqrt{m} \leq -\|\bm{g}\|\cdot \|\bm{d}\| \leq -|\langle \bm{g}, \bm{d} \rangle| \leq \langle \bm{g}, \bm{d} \rangle, 
\]
a contradiction. Hence \Cref{prob:plt} is in the class co-NP-hard.
\end{proof}

While it is not clear whether the \Cref{prob:plt} with a positive $\epsilon$ is an element of the complexity class co-NP, we show that, when $\epsilon = 0$, \Cref{prob:plt} is in co-NP.

\begin{Lemma}\label{lem:co-NP}
	If $\epsilon = 0$, then \Cref{prob:plt} is in the complexity class of co-NP.
\end{Lemma}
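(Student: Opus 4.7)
The plan is to show that the complement $\overline{\textnormal{PLT}}$ lies in NP. When $\epsilon = 0$, the constraint $\|\bm{g}\| \leq 0$ forces $\bm{g} = \bm{0}$, so the PLT instance collapses to the universal question ``does $f_{\textsf{PLT}}(\bm{d}) \geq 0$ hold for every $\bm{d} \in \mathbb{R}^m$?''. Its complement therefore asks whether there exists $\bm{d}$ with $f_{\textsf{PLT}}(\bm{d}) < 0$. Unfolding the outer maximum in the definition of $f_{\textsf{PLT}}$, this is equivalent to asserting that for every clause index $i \in [n]$ there exists a position $j_i \in \{1,2,3\}$ with $\bm{d}^\top \bm{y}_{3(i-1)+j_i} > 0$.

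This suggests the natural NP certificate for $\overline{\textnormal{PLT}}$: the purely combinatorial tuple $(j_1,\ldots,j_n) \in \{1,2,3\}^n$, of bit length $O(n)$ and hence polynomial in the input size. Given such a tuple, the verifier must decide feasibility of the strict linear system $\bm{d}^\top \bm{y}_{3(i-1)+j_i} > 0$ for all $i\in[n]$. Because every $\bm{y}_i$ is an integer vector, this strict system is feasible if and only if the non-strict system $\bm{d}^\top \bm{y}_{3(i-1)+j_i} \geq 1$ is feasible, by positive rescaling of any feasible $\bm{d}$. Feasibility of a rational linear program lies in $\textnormal{P}$ (e.g., by Khachiyan's ellipsoid method), so the verifier runs in polynomial time in the total input length of the PLT instance plus the certificate.

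Combining a polynomial-size certificate with a polynomial-time verifier yields $\overline{\textnormal{PLT}} \in \textnormal{NP}$, hence $\textnormal{PLT} \in \textnormal{co-NP}$ as claimed. The main point that has to be handled carefully is the reduction from strict to non-strict inequalities, since one cannot hand the verifier a real-valued $\bm{d}$ and directly check $\bm{d}^\top \bm{y}_{3(i-1)+j_i} > 0$ in the Turing model; using the integrality of $\{\bm{y}_i\}_{i=1}^{3n}$ together with the positive $1$-homogeneity of the constraints turns this into standard rational LP feasibility, which is the only nontrivial ingredient in the argument.
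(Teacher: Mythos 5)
Your proposal is correct and follows essentially the same route as the paper: guess the combinatorial selection of one active position per clause, use positive homogeneity of $f_{\textsf{PLT}}$ to replace the strict inequalities $\bm{d}^\top \bm{y}_{3(i-1)+j_i} > 0$ by $\bm{d}^\top \bm{y}_{3(i-1)+j_i} \geq 1$, and reduce verification to polynomial-time rational LP feasibility. The only cosmetic difference is that you take the tuple $(j_1,\dots,j_n)$ alone as the certificate and let the verifier solve the LP, whereas the paper phrases it as the nondeterministic algorithm producing both $\bm{s}$ and a rational witness $\bm{d}'$; these are interchangeable.
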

\begin{proof}
For $\epsilon = 0$, we only need to test $f_{\textsf{PLT}}(\bm{d}) \geq 0,  \forall \bm{d} \in \mathbb{R}^m$.
	Given any $\bm{d} \in \mathbb{R}^m$ checking whether $f_{\textsf{PLT}}(\bm{d}) < 0$ can be done in $O(mn\log n)$ time. If the answer to \Cref{prob:plt} is yes, by homogeneity in  $f_{\textsf{PLT}}$,  there exist a direction $\bm{d}$ and a vector $\bm{s} \in \{1,2,3\}^n$ such that $f_{\textsf{PLT}}(\bm{d}) \leq -1$ and $\bm{d}^\top \bm{y}_{3(i-1)+s_i} \geq 1$ for any $i \in [n]$.
	There are only $3^n$ elements in the set $\{1,2,3\}^n$ and all resulting  $\left[ \begin{array}{c|c|c}
		\bm{y}_{s_1} & \cdots & \bm{y}_{3n-3+s_n} \end{array} \right]$ are integer matrix of polynomial length relative to the input size of \Cref{prob:plt}. So the certificate $\bm{d}$ can be obtained by solving a linear program in polynomial time. Therefore, if there exists $\bm{d} \in \mathbb{R}^m$ such that $f_{\textsf{PLT}}(\bm{d}) < 0$, then a nondeterministic algorithm can find $\bm{s} \in \{1,2,3\}^n$ and $\bm{d}'\in \mathbb{Q}^m$ satisfying $f_{\textsf{PLT}}(\bm{d}') \leq -1 < 0$ in polynomial time. Thus, \Cref{prob:plt} with $\epsilon = 0$ is an element of the complexity class co-NP.
\end{proof}

\begin{proof}[Proof of \Cref{thm:hard-general}]
	We first note that \Cref{prob:plt} can be written in the standard max-min form in polynomial time by the following elementary identify:
	\[
	-\sum_{i=1}^3 \max\{t_i, 0\} = \min\left\{\sum_{i=1}^3 s_i\cdot t_i : s_k \in \{-1,0\}, \forall k \in \{1,2,3\} \right\}.
	\]
	Besides, it holds $f_{\textsf{PLT}}(\bm{d}) = f_{\textsf{PLT}}(\bm{0}) + f_{\textsf{PLT}}'(\bm{0}; \bm{d}) = f_{\textsf{PLT}}'(\bm{0}; \bm{d})$.
	By \Cref{def:subd-f}, we know $\dist\big(\bm{0}, \widehat{\partial} f_{\textsf{PLT}}(\bm{0})\big) \leq \epsilon$ if and only if there
	exists a vector  $\bm{g} \in \mathbb{R}^m$ satisfying $0 \leq \|\bm{g}\| \leq \epsilon$ and $f_{\textsf{PLT}}(\bm{d}) \geq \langle \bm{g}, \bm{d} \rangle, \forall \bm{d} \in \mathbb{R}^m$, which is the definition of \Cref{prob:plt}.
	Note that if $\epsilon = 0$, in the reduction from 3SAT in the proof of \Cref{lem:plt-np-hard}, all numerical parameters are bounded by a polynomial of the input size.
	 The proof completes by \Cref{lem:plt-np-hard}.%
\end{proof}

\subsection{Hardness of Abs-Normal Form Test}

\begin{proof}[Proof of \Cref{coro:abs-norm-hard}]
We first show that PLT in \Cref{prob:plt} can be written in the abs-normal form in polynomial time.
For ease of notation, let $q_i(\bm{d}) \coloneqq -\sum_{j=1}^3 \max\left\{ \bm{d}^\top \bm{y}_{3(i-1)+j},0 \right\}$ for any $i \in [n]$. Then, we can rewrite every $q_i$ in the abs-linear form as
\begin{alignat*}{2}
z_i(\bm{d}) &= \bm{y}_i^\top \bm{d}, &\forall i \in [n]. \\	
q_i(\bm{d}, \bm{p}) &= -\frac{1}{2}\sum_{j=1}^3\bm{d}^\top \bm{y}_{3(i-1)+j} - \frac{1}{2}\sum_{j=1}^3 p_{3(i-1)+j}, \qquad &\forall i \in [n].
\end{alignat*}
Note that the function $f_{\textsf{PLT}}$ can be expressed as
\[
y\coloneqq f_{\textsf{PLT}}(\bm{d})=\max_{1 \leq i \leq n} q_i(\bm{d}, |\bm{z}|) = \max\{\dots, \max\{q_1(\bm{d},|\bm{z}|),q_2(\bm{d},|\bm{z}|)\},\dots,q_n(\bm{d},|\bm{z}|)\},
\]
which can be written in abs-normal form as
\begin{alignat*}{2}
	z_i &= F_i(\bm{q}, |\bm{z}|) = \frac{1}{2^{i-2}}\cdot q_1 + \sum_{t=2}^{i-1}\big( q_t + p_{3n+t-1} \big) - q_i, &\qquad\forall 3n+1 \leq i \leq 4n-1,\\
	y&=f(\bm{q}, \bm{p}) = \frac{1}{2^{n-1}}\cdot q_1 + \sum_{t=2}^n\big( q_t + p_{3n+t-1} \big).
\end{alignat*}
In sum, we have
\[
z_i = \left\{ \begin{array}{rcl}
         \displaystyle \bm{y}_i^\top \bm{w} & \mbox{for}
         & 1 \leq i \leq 3n \\ 
         \displaystyle\frac{1}{2^{i-3n-1}}\cdot q_1+\sum_{t=2}^{i-3n-1} \frac{1}{2^{i-3n-t}}\cdot \big( q_t + p_{3n+t-1} \big)  & \mbox{for} & 3n+1 \leq i \leq 4n-1                \end{array}\right..
\]
Then, we know
\[
f_{\textsf{PLT}}(\bm{d})=f(\bm{d}, |\bm{z}(\bm{d})|) = \frac{1}{2^{n-1}}\cdot q_1 + \sum_{t=2}^n\big( q_t + |z_{3n+t-1}(\bm{d})| \big).
\]
Then, the matrices $\bm{L}, \bm{Z}, \bm{a}, \bm{b}$ can be computed in polynomial time.

We note that $\inf_{\bm{d}} f_{\textsf{PLT}} (\bm{d}) \geq 0$ if and only 
if the function $f_{\textsf{PLT}}$ is first-order minimal in abs-normal form and this is shown in the discussion below \cite[Equation (2)]{griewank2019relaxing} (see also \cite[p3]{griewank2016first}). Then, the answer of ANFT in \Cref{prob:anft} for the abs-normal form of $f_{\textsf{PLT}}$ is No if and only if $\bm{0}$ is a Fr\'echet stationary point of $f_{\textsf{PLT}}$. Then, by \Cref{lem:plt-np-hard}, ANFT in \Cref{prob:anft} is NP-hard. To see ANFT is in NP, for any given $\bm{\sigma} \in \{-1,1\}^s$, the computation of the vector $\bm{a}^\top + \bm{b}^\top \big(\diag(\bm{\sigma}) - \bm{L}\big)^{-1}\bm{Z}$ and the matrix $\big(\diag(\bm{\sigma}) - \bm{L}\big)^{-1}\bm{Z}$ can be done in polynomial time. Then, ANFT for a given $\bm{\sigma}$ reduces to check the infeasibility of a linear system, which is in P. In sum, we have shown ANFT in \Cref{prob:anft} is NP-complete, which implies a general test of FOM without kink qualification in \cite[Theorem 4.1]{griewank2019relaxing} is co-NP-complete.
\end{proof}

\subsection{Hardness of Neural Network Test}

\begin{Lemma}\label{lem:nnt-co-np-hard}
	\Cref{prob:nnt} (NNT) is co-NP-hard. If $\epsilon=0$, \Cref{prob:nnt} is co-NP-complete.
\end{Lemma}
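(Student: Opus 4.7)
The plan is to give a polynomial-time reduction from \Cref{prob:plt} (PLT) to \Cref{prob:nnt} (NNT) that preserves the tolerance $\epsilon$, and then invoke \Cref{lem:plt-np-hard}. In fact, the reduction is trivial at the level of inputs: given PLT data $\{\bm{y}_i\}_{i=1}^{3n}\subseteq\mathbb{Z}^{m}$ and tolerance $\epsilon$, feed the same data and the same $\epsilon$ into NNT, testing $\epsilon$-Fr\'echet stationarity of the distinguished point $(-\mathbf{1}_{3n},\bm{0}_m)$. All the work is in verifying that the two decision problems are equivalent, which I would do by computing the directional derivative of $f_{\textsf{NNT}}$ at $(-\mathbf{1},\bm{0})$ and then reading off $\widehat\partial f_{\textsf{NNT}}(-\mathbf{1},\bm{0})$ via \Cref{def:subd-f}.

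The key calculation is the following. At $(\bm{u},\bm{w})=(-\mathbf{1},\bm{0})$, each inner piece $h_i(\bm{u},\bm{w})\coloneqq \sum_{j=1}^3 u_{3(i-1)+j}\max\{\bm{w}^\top\bm{y}_{3(i-1)+j},0\}$ vanishes, so every index $i$ is active in the outer maximum and the directional derivative is the pointwise maximum of the $h_i'$. For each summand, positive $1$-homogeneity of $\bm{w}\mapsto\max\{\bm{w}^\top\bm{y},0\}$ gives, for $t\downarrow 0$,
\[
\frac{(-1+td_u)\,\max\{t\bm{d}_w^\top\bm{y},0\}-0}{t}=(-1+td_u)\max\{\bm{d}_w^\top\bm{y},0\}\;\longrightarrow\;-\max\{\bm{d}_w^\top\bm{y},0\},
\]
so the $d_u$ contribution is a higher-order term. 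Summing and maximizing,
\[
f_{\textsf{NNT}}'\big((-\mathbf{1},\bm{0});(\bm{d}_u,\bm{d}_w)\big)=\max_{1\leq i\leq n}\Big(-\sum_{j=1}^{3}\max\{\bm{d}_w^\top\bm{y}_{3(i-1)+j},0\}\Big)=f_{\textsf{PLT}}(\bm{d}_w),
\]
which depends only on $\bm{d}_w$.

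Plugging this into \Cref{def:subd-f}, a pair $(\bm{g}_u,\bm{g}_w)$ lies in $\widehat\partial f_{\textsf{NNT}}(-\mathbf{1},\bm{0})$ iff $\langle\bm{g}_u,\bm{d}_u\rangle+\langle\bm{g}_w,\bm{d}_w\rangle\leq f_{\textsf{PLT}}(\bm{d}_w)$ for every direction. Choosing $\bm{d}_w=\bm{0}$ and using $f_{\textsf{PLT}}(\bm{0})=0$ forces $\bm{g}_u=\bm{0}$; the remaining inequality $\langle\bm{g}_w,\bm{d}_w\rangle\leq f_{\textsf{PLT}}(\bm{d}_w)$ is exactly the defining condition of PLT. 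Consequently $\dist(\bm{0},\widehat\partial f_{\textsf{NNT}}(-\mathbf{1},\bm{0}))\leq\epsilon$ iff the PLT instance answers yes, and co-NP-hardness of NNT follows from \Cref{lem:plt-np-hard}.

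For the case $\epsilon=0$, it remains to exhibit membership in co-NP. The complement of NNT at $\epsilon=0$ asks for a direction $(\bm{d}_u,\bm{d}_w)$ with $f_{\textsf{NNT}}'((-\mathbf{1},\bm{0});(\bm{d}_u,\bm{d}_w))<0$, which by the displayed identity is equivalent to $f_{\textsf{PLT}}(\bm{d}_w)<0$; this is in NP by the guess-the-active-selection argument already used in \Cref{lem:co-NP} (nondeterministically pick $\bm{s}\in\{1,2,3\}^n$ and solve a polynomial-size LP to produce a rational certifying $\bm{d}_w$). The only delicate point in the whole argument is the directional-derivative computation at a point where both factors in $u\cdot\max\{\bm{w}^\top\bm{y},0\}$ degenerate at once; positive homogeneity of the ReLU collapses the would-be cross term and is precisely what kills the $\bm{d}_u$ dependence, making the reduction tolerance-preserving and yielding co-NP-completeness.
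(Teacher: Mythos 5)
Your proposal is correct and follows essentially the same route as the paper: the same reduction from PLT with the same data and tolerance, the same key identity $f_{\textsf{NNT}}'((-\mathbf{1},\bm{0});(\bm{d}_u,\bm{d}_w))=f_{\textsf{PLT}}(\bm{d}_w)$, the same choice $\bm{d}_w=\bm{0}$ to force $\bm{g}_u=\bm{0}$, and the same appeal to the PLT lemmas for hardness and, when $\epsilon=0$, co-NP membership. The only cosmetic difference is that you compute the directional derivative by hand from the difference quotient via positive homogeneity, whereas the paper invokes the chain rule for B-differentiable functions.
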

\begin{proof}
We first prove that $(-\mathbf{1}_{3n}, \mathbf{0}_m)$ is an $\epsilon$-Fr\'echet stationary point of $f_{\textsf{NNT}}$ if and only if there exists $\bm{g}^w \in \mathbb{B}_\epsilon^m(\bm{0})$ such that $\inf_{\bm{d}\in\mathbb{R}^m} f_{\textsf{PLT}}(\bm{d}) \geq \langle \bm{g}^w, \bm{d}\rangle$ with the same input data $\{\bm{y}_i\}_{i=1}^{3n} \subseteq \mathbb{Z}^{m}$. By \cite[Exercise 8.4]{rockafellar2009variational} and $f_{\textsf{NNT}}$ is B-differentiable; see \cite[Definition 4.1.1]{cui2021modern}, we get $\dist\big(\bm{0}, \widehat{\partial} f_{\textsf{NNT}}(-\mathbf{1}_{3n}, \bm{0}_m)\big) \leq \epsilon$ if and only if there exists $(\bm{g}^u, \bm{g}^w) \in \mathbb{B}_\epsilon^{3n+m}(\bm{0}_{3n+m})$ such that
\[\label{eq:prf-nnt}
f_{\textsf{NNT}}'(-\bm{1}_{3n}, \bm{0}_m; \bm{d}^u, \bm{d}^w)\geq \langle \bm{d}^u, \bm{g}^u \rangle + \langle \bm{d}^w, \bm{g}^w \rangle,\qquad \forall \bm{d}^u \in\mathbb{R}^{3n}, \bm{d}^w \in \mathbb{R}^m. \tag{$\sharp$}
\]
Using the chain rule of directional derivative for B-differentiable function \cite[Proposition 4.1.2(a)]{cui2021modern}, we have
\[
f_{\textsf{NNT}}'(-\bm{1}_{3n}, \bm{0}_m; \bm{d}^u, \bm{d}^w)= 
\max_{1\leq i \leq n}- \sum_{j=1}^3 \max\left\{ \bm{y}_{3(i-1)+j}^\top \bm{d}^w,0 \right\}= f_{\textsf{PLT}}(\bm{d}^w).
\]
For any $\bm{g}^u,\bm{g}^{w}$, consider $\bm{d}^w = \bm{0}$ and $\bm{d}^u = \bm{g}^u$. We get that \Cref{eq:prf-nnt} holds if and only if  $\bm{g}^u = \bm{0}_{3m}$ and $\inf_{\bm{d}^w\in\mathbb{R}^m} f_{\textsf{PLT}}(\bm{d}^w) \geq \langle \bm{g}^w, \bm{d}^w\rangle$,
which completes the proof by the co-NP-hardness of \Cref{prob:plt} in \Cref{lem:plt-np-hard} and co-NP-completeness if $\epsilon = 0$ in  \Cref{lem:co-NP}.
\end{proof}

\begin{proof}[Proof of \Cref{coro:nnhard}]
Note that \Cref{prob:nnt} can be represented by the empirical loss of a convolutional neural network with $N=1$ and architecture
	\[
	 \ell_1\Big(\quad\textnormal{\tt{max-pooling}}\quad\circ\quad \textnormal{\tt{conv}}^{\bm{u}}\quad\circ\quad \textnormal{\tt{ReLU}}\quad \circ\quad\textnormal{\tt conv}^{\bm{w}}(\bm{Y})\quad\Big),
	\]
	where $\ell_1(t)=t$ and $\bm{Y}=\left[ \begin{array}{c|c|c}
		\bm{y}_1 & \cdots & \bm{y}_{3n} \end{array} \right]\subseteq \mathbb{Z}^{m\times 3n}$.
If $\epsilon = 0$, in the reduction from 3SAT to PLT, then to NNT, all numerical parameters are bounded by a polynomial of the input size. The proof completes by \Cref{lem:nnt-co-np-hard}.
\end{proof}

\section{Proofs for \Cref{sec:regularity}}

\subsection{Proof Roadmap}\label{sec:roadmap}
Recall the loss function $L$ of shallow ReLU neural network:
\[
L(u_1,\bm{w}_1,\dots,u_H,\bm{w}_H) \coloneqq \sum_{i=1}^N \ell_i \left( \sum_{k=1}^H u_k\cdot \max\left\{ \bm{w}_k^\top \bm{x}_i, 0 \right\} \right).
\]
Set constants $\rho_i \coloneqq \ell_i'\left( \sum_{k=1}^H u_k\cdot \max\left\{ \bm{w}_k^\top \bm{x}_i, 0 \right\} \right) $ for any $i \in [N]$. Let us first consider a partially linearized loss function $\overline{L}$ defined by
\[
\begin{aligned}
\overline{L}(u_1,\bm{w}_1,\dots,u_H,\bm{w}_H) &\coloneqq \sum_{i=1}^N \rho_i \cdot \left( \sum_{k=1}^H u_k\cdot \max\left\{ \bm{w}_k^\top \bm{x}_i, 0 \right\} \right) \\
&= \sum_{k=1}^H\overline{L}_k(u_k, \bm{w}_k)\coloneqq\left( u_k \cdot \sum_{i=1}^N \rho_i \cdot \max\left\{ \bm{w}_k^\top \bm{x}_i, 0 \right\} \right).
\end{aligned}
\]
By exploiting the smoothness of $\{\ell_i\}_{i=1}^N$ and a Lagrange scalarization technique in \Cref{thm:linearization}, we will show that
\[
\partial_\triangleleft L(u_1,\bm{w}_1,\dots,u_H,\bm{w}_H) = \partial_\triangleleft \overline{L}(u_1,\bm{w}_1,\dots,u_H,\bm{w}_H).
\]
Then, we focus on the linearized $\overline{L}$. By separation of $\{(u_k, \bm{w}_k)\}_k$ and using again the Lagrange scalarization technique in form of \Cref{coro:prod}, we have
\begin{align*}
\partial_\triangleleft \overline{L}(u_1,\bm{w}_1,\dots,u_H,\bm{w}_H) &\overset{(a)}{=}
\prod_{k=1}^H \partial_\triangleleft \overline{L}_k(u_k, \bm{w}_k) \\
&\overset{(b)}{=} \prod_{k=1}^H \left\{  \sum_{i=1}^N \rho_i \cdot \max\left\{ \bm{w}_k^\top \bm{x}_i, 0 \right\} \right\} \times \partial_\triangleleft \Big[ \overline{L}_k(u_k, \cdot) \Big](\bm{w}_k), %
\end{align*}
where (a) is due to \cite[Proposition 2.5]{rockafellar1985extensions} and \cite[Proposition 10.5]{rockafellar2009variational}; (b) is by \Cref{coro:prod}.
Therefore, it holds
\[
\partial_\triangleleft L(u_1,\bm{w}_1,\dots,u_H,\bm{w}_H)
=
\prod_{k=1}^H \left\{   \sum_{i=1}^N \rho_i \cdot \max\left\{ \bm{w}_k^\top \bm{x}_i, 0 \right\}  \right\} \times \partial_\triangleleft \Big[ \overline{L}_k(u_k, \cdot) \Big](\bm{w}_k),
\]
which implies that the validity of exact chain rule of $L$ rely on a careful study of $\overline{L}_k(u_k, \cdot)$.
In particular, if we have the exact chain rule for any $k\in[H]$ as follows
\begin{equation}\label{eq:Lk-CR}
	\partial_\triangleleft \Big[ \overline{L}_k(u_k, \cdot) \Big](\bm{w}_k) =G_k^\triangleleft,
\end{equation}
then we get the validity of exact chain rule for $L$. That is $\partial_\triangleleft L(u_1,\bm{w}_1,\dots,u_H,\bm{w}_H) = $
\[
\prod_{k=1}^H \left\{  \sum_{i=1}^N \rho_i \cdot \max\left\{ \bm{w}_k^\top \bm{x}_i, 0 \right\} \right\} \times 
G_k^\triangleleft.
\]
To prove \Cref{eq:Lk-CR}, we need a fine-grained analysis of $\overline{L}_k(u_k, \cdot)$. First, we isolate the nonsmooth part out by rewritting
\[
\Big[\overline{L}_k(u_k, \cdot) \Big] (\bm{w}_k)=\sum_{i \in [N]\backslash (\mathcal{I}_k^+\cup\mathcal{I}_k^-)} u_k\rho_i\cdot \max\left\{ \bm{w}_k^\top \bm{x}_i, 0 \right\} + f_k(\bm{w}_k),
\]
where we define
\[
f_k(\bm{w}_k)\coloneqq \sum_{i \in \mathcal{I}_k^+} u_k\rho_i\cdot \max\left\{ \bm{w}_k^\top \bm{x}_i, 0 \right\} - \sum_{j \in \mathcal{I}_k^-} (-u_k\rho_j)\cdot \max\left\{ \bm{w}_k^\top \bm{x}_j, 0 \right\}.
\]
What remaining is to study the subdifferential of this non-separable piecewise linear function $f_k$ for any $k\in[H]$ and figure out conditions, under which
\[
\partial_\triangleleft f_k(\bm{w}_k) = \widetilde{G}_k^\triangleleft \coloneqq G_k^\triangleleft - \sum_{i \in [N]\backslash (\mathcal{I}_k^+\cup\mathcal{I}_k^-)} u_k\rho_i\cdot \mathbf{1}_{\bm{w}_k^\top\bm{x}_i > 0} \cdot\bm{x}_i.
\]
This will be done in \Cref{sec:prf-CR-pl}.

\subsection{Technical Lemmas}

\begin{Lemma}[Gordan, {cf.~\cite[Exercise 4.26]{bertsimas1997introduction}}]\label{lem:gordan}
	Let $\bm{A}\in\mathbb{R}^{n\times m}$ be given. Then, exactly one of the following statements is true:
	\begin{itemize}
		\item There exists an $\bm{x}\in \mathbb{R}^m$ such that $\bm{Ax} < \bm{0}$.
		\item There exists a $\bm{y} \in \mathbb{R}^n$ such that $\bm{A}^\top \bm{y} = \bm{0}$ with $\bm{y} \geq \bm{0}, \bm{y}\neq \bm{0}$.
	\end{itemize}
\end{Lemma}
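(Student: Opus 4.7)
The plan is standard: split into two halves, first showing that the two alternatives are mutually exclusive, and then showing that at least one of them always holds. Mutual exclusivity is an immediate contradiction. If $\bm{x}$ witnesses the first alternative and $\bm{y}$ witnesses the second, then on the one hand $\bm{y}^\top(\bm{A}\bm{x}) = (\bm{A}^\top\bm{y})^\top \bm{x} = 0$, while on the other hand $\bm{y} \geq \bm{0}$ with $\bm{y} \neq \bm{0}$, combined with the strict componentwise inequality $\bm{A}\bm{x} < \bm{0}$, yields $\bm{y}^\top(\bm{A}\bm{x}) < 0$, a contradiction.

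For the existence of at least one alternative, I would invoke linear programming duality on the auxiliary program
\[
\min_{\bm{x} \in \mathbb{R}^m,\ t \in \mathbb{R}} t \quad \text{subject to} \quad \bm{A}\bm{x} - t \cdot \mathbf{1} \leq \bm{0},
\]
where $\mathbf{1} \in \mathbb{R}^n$ denotes the all-ones vector. The pair $(\bm{0}, 0)$ is feasible, so the primal optimal value is at most $0$. If the first alternative fails, then for every $\bm{x}$ some coordinate of $\bm{A}\bm{x}$ is nonnegative, so any feasible $(\bm{x}, t)$ must satisfy $t \geq 0$; hence the primal optimum equals $0$ and is attained. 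The dual reads $\max 0$ subject to $\bm{A}^\top \bm{y} = \bm{0}$, $\mathbf{1}^\top \bm{y} = 1$, $\bm{y} \geq \bm{0}$. By strong LP duality, the dual is feasible, and any dual feasible $\bm{y}$ satisfies the conditions of the second alternative; note that the normalization $\mathbf{1}^\top \bm{y} = 1$ in particular guarantees $\bm{y} \neq \bm{0}$.

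The only real subtlety is invoking the appropriate form of LP duality (or Farkas' lemma, which is equivalent). A self-contained alternative replaces this step by a separating-hyperplane argument: if the first alternative fails then $\bm{0}$ lies outside the convex set $\{\bm{z}\in \mathbb{R}^n : \exists \bm{x},\ \bm{A}\bm{x} < \bm{z}\}$, and any supporting hyperplane at $\bm{0}$, after normalization, yields the multiplier $\bm{y}$ witnessing the second alternative. Either route is routine, and the proof then concludes by combining the two halves.
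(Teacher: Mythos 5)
The paper does not actually prove this lemma: it is stated as a known fact and deferred entirely to the citation of Bertsimas and Tsitsiklis, so there is no in-paper argument to compare against. Your proposal supplies a correct, self-contained proof. The exclusivity half is right: $\bm{y}^\top(\bm{A}\bm{x}) = (\bm{A}^\top\bm{y})^\top\bm{x} = 0$ while $\bm{y}\geq\bm{0}$, $\bm{y}\neq\bm{0}$ and the strict componentwise inequality $\bm{A}\bm{x}<\bm{0}$ force $\bm{y}^\top(\bm{A}\bm{x})<0$. The existence half via the auxiliary program $\min\,t$ subject to $\bm{A}\bm{x}-t\mathbf{1}\leq\bm{0}$ is also sound: feasibility of $(\bm{0},0)$ bounds the value by $0$, failure of the first alternative forces $t\geq 0$ on the feasible set (since $t<0$ would give $\bm{A}\bm{x}\leq t\mathbf{1}<\bm{0}$), and strong duality yields a dual-feasible $\bm{y}$ with $\bm{A}^\top\bm{y}=\bm{0}$, $\bm{y}\geq\bm{0}$, $\mathbf{1}^\top\bm{y}=1$, the normalization ruling out $\bm{y}=\bm{0}$. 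This is one of the standard routes to Gordan's theorem (the separating-hyperplane variant you sketch is the other), and it is exactly the kind of argument the cited exercise expects; nothing is missing.
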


\begin{Lemma}\label{lem:localization}
	Let $A, B, C $ be sets in $\mathbb{R}^n$. Suppose further that $A$ is convex and closed, and $C$ is nonempty and bounded. If the strict inclusion $A \subsetneq B$ holds, then we can assert $A+C \subsetneq B + C$.
\end{Lemma}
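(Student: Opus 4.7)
The plan is to establish strict inclusion by a separation-plus-scalarization argument. The easy direction, $A+C \subseteq B+C$, is immediate from $A \subseteq B$ and the definition of Minkowski sum, so the entire content is to exhibit some element of $B+C$ that does not lie in $A+C$. Pick $b \in B \setminus A$, which exists by the hypothesis $A \subsetneq B$; any witness of strict inclusion of this type must involve $b$ shifted by an appropriate element of $C$.

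Since $A$ is closed, convex, and $b \notin A$, the strong separation theorem supplies $p \in \mathbb{R}^n$ and $\alpha \in \mathbb{R}$ with
\[
\sup_{a \in A} p^\top a \leq \alpha < p^\top b.
\]
Because $C$ is nonempty and bounded, $\gamma := \sup_{c \in C} p^\top c$ is finite, and one can choose a sequence $(c_n) \subseteq C$ with $p^\top c_n \to \gamma$. The idea is that any point of the form $b + c_n$, sitting in $B + C$, cannot be matched by $a + c'$ with $a \in A, c' \in C$ once $n$ is large, because the linear functional $p^\top(\cdot)$ distinguishes the two.

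Suppose for contradiction that $A+C = B+C$. Then for every $n$ we can write $b + c_n = a_n + c'_n$ with $a_n \in A$ and $c'_n \in C$. Applying $p^\top(\cdot)$ and invoking the two suprema,
\[
p^\top b + p^\top c_n \;=\; p^\top a_n + p^\top c'_n \;\leq\; \alpha + \gamma.
\]
Passing to the limit $n \to \infty$ yields $p^\top b + \gamma \leq \alpha + \gamma$, i.e., $p^\top b \leq \alpha$, contradicting the strict separation. Hence some $b + c_n$ belongs to $(B + C) \setminus (A + C)$, and strict inclusion follows.

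The only subtlety worth flagging is that neither supremum need be attained: $A$ may be unbounded, so $\sup_A p^\top a$ is only bounded above by $\alpha$, and $C$ need not be closed, so $\gamma$ may fail to be realized. Both issues are handled cleanly by using a maximizing sequence inside $C$ combined with the uniform upper bound $\alpha$ on $A$. Convexity and closedness of $A$ enter exactly once, in the strong separation step; without convexity the conclusion genuinely fails (e.g., $A=\{-1,1\}, B=[-1,1], C=[-1,1]$ in $\mathbb{R}$ give $A+C=B+C=[-2,2]$).
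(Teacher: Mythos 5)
Your proof is correct and follows essentially the same route as the paper's: both arguments separate the point $b\in B\setminus A$ from the closed convex set $A$ by a linear functional and then exploit the boundedness of $C$ through near-maximizers of that functional on $C$ (the paper builds the functional explicitly as $\bm{d}=\bm{x}_b-\mathrm{proj}_A(\bm{x}_b)$ and uses a single $\epsilon$-argmax with $\epsilon<\|\bm{d}\|^2$, whereas you invoke strong separation and pass to the limit along a maximizing sequence). The only point to add is the degenerate case $A=\emptyset$, where strong separation is unavailable but the conclusion is immediate since $A+C=\emptyset\subsetneq B+C$.
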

\begin{proof}
	Let $\bm{x}_b \in B\backslash A$. The claim is trivial when $A=\emptyset$. Choose $\bm{x}_a' \in A$ and set $\delta\coloneqq \|\bm{x}_b-\bm{x}_a'\|$. As $A$ is closed, the following $\bm{x}_a$ is well-defined
	\[
	\bm{x}_a \coloneqq \argmin_{\bm{a} \in A} \|\bm{a} - \bm{x}_b \|=\argmin_{\bm{a} \in A\cap\mathbb{B}_{\delta}(\bm{x}_b)} \|\bm{a} - \bm{x}_b \|. 
	\]
	Let $\bm{d} \coloneqq \bm{x}_b - \bm{x}_a$. As $\bm{x}_b \notin A$ and $A$ is closed, we know $\|\bm{d}\| > 0$.
	By the optimality condition and convexity of $A$, we know $\langle \bm{a} - \bm{x}_a, \bm{d}\rangle \leq 0, \forall \bm{a}\in A$, which implies $\langle  \bm{d}, \bm{a} \rangle \leq \langle\bm{d}, \bm{x}_a \rangle, \forall \bm{a} \in A$. As $C$ is bounded, we know $\langle \bm{c}, \bm{d} \rangle \leq \|\bm{c}\|\cdot \|\bm{d}\| < +\infty,\forall \bm{c} \in C$. Let $\bm{x}_c $ be
	\[
	\bm{x}_c \in \eargmax_{\bm{c} \in C}\ \langle \bm{c}, \bm{d} \rangle,
	\]
	where $0<\epsilon<\|\bm{d}\|^2$.
	We claim $\bm{x}_b + \bm{x}_c \notin A + C$. Suppose not. Therefore, there exist $\bm{y}_a \in A, \bm{y}_c \in C$ such that $\bm{y}_a + \bm{y}_c = \bm{x}_b + \bm{x}_c$. However, we compute
	\[
	\begin{aligned}
		\langle \bm{d}, \bm{x}_b + \bm{x}_c \rangle &=  \langle \bm{d}, \bm{d} + \bm{x}_a \rangle + \langle \bm{d}, \bm{x}_c \rangle \\
		& \geq \|\bm{d}\|^2 + \langle\bm{d}, \bm{y}_a \rangle + \langle\bm{d}, \bm{y}_c \rangle - \epsilon\\
		& >  \langle\bm{d}, \bm{y}_a + \bm{y}_c \rangle,
	\end{aligned}
	\]
	which gives the contradiction.
\end{proof}
\begin{Remark}
Though the claim seems straightforward,
\Cref{lem:localization} is indeed non-trivial. We record the following counterexamples when different conditions are removed.
\begin{itemize}
	\item $C$ is empty: $A+C = B + C = \emptyset$. 
	\item $C$ is unbounded: if $C=\mathbb{R}^n$ and $A,B$ are nonempty, then $A+C = B + C = \mathbb{R}^n$.
	\item $A$ is nonconvex: if $A=\mathbb{B}\backslash\mathbb{B}_{1/4}, B=\mathbb{B}, C=\mathbb{B}$, then $A+C = B + C = \mathbb{B}_2$.
	\item $A$ is not closed: if $A=\mathbb{B}^\circ, B=\mathbb{B}, C = \mathbb{B}^\circ$, then $A+C = B + C = \mathbb{B}^\circ_2$.
\end{itemize}	
\end{Remark}

\begin{Lemma}\label{lem:extremept}
Let $\{\bm{p}_i\}_{i=1}^n$ be linearly independent. Define a convex set $C = \sum_{i=1}^n \bm{p}_i\cdot[0,1]$. For any $\bm{s} \in \{0,1\}^n$, the point $\bm{p} = \sum_{i=1}^n s_i\cdot\bm{p}_i$ is an extreme point of $C$.
\end{Lemma}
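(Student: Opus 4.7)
The plan is to proceed directly from the definition of an extreme point, exploiting the uniqueness of the representation in a linearly independent basis. Fix $\bm{s}\in\{0,1\}^n$ and set $\bm{p}=\sum_{i=1}^n s_i\bm{p}_i$. Suppose, toward a characterization, that $\bm{p}=(1-\lambda)\bm{a}+\lambda \bm{b}$ for some $\lambda\in(0,1)$ and $\bm{a},\bm{b}\in C$; the goal is to conclude $\bm{a}=\bm{b}=\bm{p}$.

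First I would expand $\bm{a}$ and $\bm{b}$ in the generators of $C$: by definition of $C$, there exist coefficients $a_i,b_i\in[0,1]$ with $\bm{a}=\sum_{i=1}^n a_i\bm{p}_i$ and $\bm{b}=\sum_{i=1}^n b_i\bm{p}_i$. Substituting into the convex combination gives
\[
\sum_{i=1}^n s_i\bm{p}_i \;=\; \sum_{i=1}^n \bigl((1-\lambda)a_i+\lambda b_i\bigr)\bm{p}_i.
\]
Because $\{\bm{p}_i\}_{i=1}^n$ is linearly independent, the expansion in this family is unique, so componentwise
\[
s_i \;=\; (1-\lambda)a_i+\lambda b_i,\qquad \forall\, i\in[n].
\]

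Next I would use a case analysis on $s_i\in\{0,1\}$. If $s_i=0$, then $(1-\lambda)a_i+\lambda b_i=0$ with $a_i,b_i\ge 0$ and $1-\lambda,\lambda>0$, forcing $a_i=b_i=0$. If $s_i=1$, then $(1-\lambda)a_i+\lambda b_i=1$ with $a_i,b_i\le 1$ and $1-\lambda,\lambda>0$, so both nonnegative summands must attain their maxima, giving $a_i=b_i=1$. In either case $a_i=b_i=s_i$, hence $\bm{a}=\bm{b}=\bm{p}$, and $\bm{p}$ is extreme.

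There is no real obstacle here: the argument is essentially two lines once the uniqueness of coordinates in a linearly independent family is invoked. The only subtlety to flag is that $C$ is indeed a bounded convex set (a parallelepiped-type zonotope spanned by the $\bm{p}_i$), so the notion of an extreme point is the standard one, and the coefficients $a_i,b_i$ must genuinely lie in $[0,1]$ rather than in some larger range; this is ensured directly by the Minkowski-sum definition $C=\sum_i \bm{p}_i\cdot[0,1]$.
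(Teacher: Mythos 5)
Your proof is correct and follows essentially the same route as the paper's: expand the two points of the convex combination in the linearly independent family, use uniqueness of coordinates to get $s_i=(1-\lambda)a_i+\lambda b_i$, and conclude $a_i=b_i=s_i$ by the case analysis on $s_i\in\{0,1\}$. The only cosmetic difference is that you work with a general convex combination $\lambda\in(0,1)$ while the paper uses the equivalent midpoint formulation $\bm{p}=\tfrac12\bm{x}_1+\tfrac12\bm{x}_2$.
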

\begin{proof}
	Suppose not and $\bm{p} = \frac{1}{2}\bm{x}_1 + \frac{1}{2}\bm{x}_2 = \sum_{i=1}^n s_i\cdot\bm{p}_i$ with $\bm{p} \neq \bm{x}_1 = \sum_{i=1}^n\alpha_i\cdot \bm{p}_i \in C$ and 
	$\bm{p} \neq \bm{x}_2 = \sum_{i=1}^n\beta_i\cdot \bm{p}_i \in C$. We know $\alpha_i \in [0,1]$ and $\beta_i \in [0,1]$ for any $i\in[n]$ by definition. Thus, it holds
	\[
	\sum_{i=1}^n s_i \cdot \bm{p}_i = \sum_{i=1}^n \left(\frac{\alpha_i + \beta_i}{2} \right)\cdot \bm{p}_i.
	\]
	As $\{\bm{p}_i\}_{i=1}^n$ are linearly independent, we know that, for any $i \in [n]$, it holds $s_i = \left(\frac{\alpha_i + \beta_i}{2} \right) \in \{0,1\}$. If $s_i = 0$, we have $\alpha_i = \beta_i = 0$. Meanwhile, we know $\alpha_i = \beta_i = 1$ if $s_i = 1$. Therefore, it holds $\bm{x}_1 = \bm{x}_2 = \bm{p}$, a contradiction.
\end{proof}

\begin{Lemma}\label{lem:frechet-empty}
Let a function $g:\mathbb{R}^d \rightarrow \mathbb{R}$ be $\bm{w} \mapsto -\sum_{j=1}^m \max\{\bm{y}_j^\top \bm{w}, 0\}$. If there exists $j \in [m]$ such that $\bm{w}^\top \bm{y}_j = 0$ and $\bm{y}_j \neq \bm{0}$, then we have $\widehat{\partial} g(\bm{w}) = \emptyset$.
\end{Lemma}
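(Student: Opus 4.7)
The plan is to compute the directional derivative $g'(\bm{w};\bm{d})$ explicitly, then derive a contradiction from the defining inequality of the Fr\'echet subdifferential by plugging in both a direction $\bm{d}$ and its negation $-\bm{d}$.

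First, since $g$ is a finite sum of negated ReLU expressions, I compute
\[
g'(\bm{w};\bm{d}) = -\sum_{j:\bm{y}_j^\top\bm{w}>0}\bm{y}_j^\top\bm{d}\ -\sum_{j:\bm{y}_j^\top\bm{w}=0}\max\{\bm{y}_j^\top\bm{d},0\},
\]
using the standard chain rule for directional derivatives of a composition with $\max\{\cdot,0\}$ (the terms with $\bm{y}_j^\top\bm{w}<0$ contribute zero). Note that $g'(\bm{w};\cdot)$ is concave and positively homogeneous.

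Next, suppose towards contradiction that some $\bm{s}\in\widehat{\partial}g(\bm{w})$ exists. Then by \Cref{def:subd-f}, $\bm{s}^\top\bm{d}\leq g'(\bm{w};\bm{d})$ for all $\bm{d}$. Applying this to $\bm{d}$ and to $-\bm{d}$ and adding the two inequalities yields
\[
0 = \bm{s}^\top\bm{d}+\bm{s}^\top(-\bm{d})\leq g'(\bm{w};\bm{d})+g'(\bm{w};-\bm{d}).
\]
Using the explicit formula, the linear terms from the indices with $\bm{y}_j^\top\bm{w}>0$ cancel, and the nonsmooth terms at the kink indices combine via $\max\{t,0\}+\max\{-t,0\}=|t|$ to give
\[
g'(\bm{w};\bm{d})+g'(\bm{w};-\bm{d}) = -\sum_{j:\bm{y}_j^\top\bm{w}=0}|\bm{y}_j^\top\bm{d}|.
\]

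Finally, I choose the direction $\bm{d}=\bm{y}_{j^*}$, where $j^*$ is the index given by hypothesis with $\bm{w}^\top\bm{y}_{j^*}=0$ and $\bm{y}_{j^*}\neq\bm{0}$. The right-hand side is then upper bounded by $-|\bm{y}_{j^*}^\top\bm{y}_{j^*}| = -\|\bm{y}_{j^*}\|^2<0$, contradicting the chain of inequalities above. Hence no such $\bm{s}$ exists, and $\widehat{\partial}g(\bm{w})=\emptyset$. There is no substantial obstacle here; the only subtlety is verifying that the directional-derivative formula above is the correct pointwise expression, which follows routinely from the piecewise linear structure and the fact that $\max\{\cdot,0\}$ is B-differentiable.
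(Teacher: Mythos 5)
Your proof is correct. It rests on the same two pillars as the paper's argument: the characterization $\bm{s}^\top\bm{d}\leq g'(\bm{w};\bm{d})$ for all $\bm{d}$ from \Cref{def:subd-f}, and the final test direction $\bm{d}=\bm{y}_{j^*}$, which forces the kink term to contribute $-\|\bm{y}_{j^*}\|^2<0$. The difference is in how the smooth (non-kink) part is disposed of. The paper first splits $g$ into a differentiable part plus the kink sum $g_0$, invokes a sum rule to reduce to $\widehat{\partial}g_0(\bm{w})$, observes $g_0'(\bm{w};\cdot)\leq 0$, and then needs an intermediate step (plugging $\bm{d}=\bm{u}'$) to conclude the residual subgradient is $\bm{0}$ before reaching the contradiction. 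You instead symmetrize, adding the subgradient inequalities at $\bm{d}$ and $-\bm{d}$ so that the linear terms from indices with $\bm{y}_j^\top\bm{w}>0$ cancel and $\max\{t,0\}+\max\{-t,0\}=|t|$ collapses the kink terms; this removes both the sum-rule citation and the intermediate ``$\bm{u}'=\bm{0}$'' step in one stroke. Your route is marginally more self-contained; the paper's route has the side benefit of explicitly identifying $\widehat{\partial}g(\bm{w})$ as a translate of $\widehat{\partial}g_0(\bm{w})$, which mirrors the decompositions used elsewhere in its chain-rule analysis. Your directional-derivative formula and the claimed cancellation both check out.
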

\begin{proof}
	Suppose not and let $\bm{u} \in \widehat{\partial} g(\bm{w})$. We write
	\[
	g(\bm{w}) = -\sum_{j:\bm{w}^\top \bm{y}_j \neq 0} \max\{\bm{w}^\top \bm{y}_j,0\} +g_0(\bm{w}),
	\]
	where we define $g_0(\bm{w})\coloneqq -\sum_{k:\bm{w}^\top \bm{y}_k = 0} \max\{\bm{w}^\top \bm{y}_k,0\} $. Then, by \cite[Exercise 8.8(c)]{rockafellar2009variational}, we have 
	\[
	\widehat{\partial} g(\bm{w}) = -\sum_{j:\bm{w}^\top \bm{y}_j \neq 0} \mathbf{1}_{\bm{w}^\top \bm{y}_j > 0} \cdot \bm{y}_j + \widehat\partial g_0(\bm{w}).
	\]
	Let $\bm{u}' = \bm{u} + \sum_{j:\bm{w}^\top \bm{y}_j \neq 0} \mathbf{1}_{\bm{w}^\top \bm{y}_j > 0} \cdot  \bm{y}_j$ and we know $\bm{u}' \in \widehat\partial g_0(\bm{w})$. By \cite[Exercise 8.4]{rockafellar2009variational}, for any $\bm{d}\in\mathbb{R}^d$, it holds
	\[
	\bm{u}'^\top \bm{d} \leq g_0'(\bm{w};\bm{d}) = -\sum_{k:\bm{w}^\top \bm{y}_k = 0} \mathbf{1}_{\bm{d}^\top \bm{y}_k>0} \cdot \bm{d}^\top \bm{y}_k \leq 0.
	\] 
	Let $\bm{d}=\bm{u}'$ and we know $\|\bm{u}'\|^2 \leq g_0'(\bm{w};\bm{u}') \leq 0$. Thus, $\bm{u}'=\bm{0}$. Let $\bm{d}$ be any $\bm{y}_j$ such that $\bm{w}^\top \bm{y}_j = 0$ and $\bm{y}_j \neq 0$. Then, we have
	\[
	0 = \bm{u}'^\top \bm{y}_j \leq g_0'(\bm{w};\bm{y}_j) \leq - \|\bm{y}_j\|^2 < 0,
	\]
	a contradiction.
\end{proof}

\begin{Definition}[Bouligand subdifferential, c.f.~{\cite[Definition 4.3.1]{cui2021modern}}]
	\label{def:subd-g}
	Given a point $\bm{x}$, the Bouligand subdifferential of a locally Lipschitz function $f$ at $\bm{x}$ is defined by
	\[
	\partial_B f (\bm{x}) := \big\{ \bm{g}: \exists \{\bm{x}_\nu\} \rightarrow \bm{x} \textnormal{ and } \{\nabla f(\bm{x}_\nu)\} \rightarrow \bm{g} \textnormal{ s.t. } \nabla f(\bm{x}_\nu) \textnormal{ exists for any }\nu \big\}.
	\]

\end{Definition}

\subsection{Partial Linearization via Lagrange Scalarization}
The following theorem is a powerful and general principle.%
\begin{Theorem}[Partial linearization]\label{thm:linearization}
	Let a point $\bm{x}\in\mathbb{R}^d$ and a locally Lipschitz  $f:\mathbb{R}^d \rightarrow \mathbb{R}$ be given in form of composition $f(\bm{x}) = h\circ G(\bm{x})$, where the gradient of $h:\mathbb{R}^n \rightarrow \mathbb{R}$ is locally Lipschitz near $G(\bm{x})$ and $G:\mathbb{R}^d\rightarrow\mathbb{R}^n$ is locally Lipschitz near $\bm{\bm{x}}$. Suppose $h$ and $G$ are directionally differentiable. Then, we have
	\[
	\partial_\triangleleft f(\bm{x}) = \partial_\triangleleft \Big[\left\langle \nabla h\big(G(\bm{x})\big), G(\cdot ) \right\rangle \Big](\bm{x}).
	\]
\end{Theorem}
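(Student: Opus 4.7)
The strategy is to linearize the outer function $h$ at the base point $G(\bm{x})$ and to show that the resulting remainder is a ``trivial'' perturbation in the sense that it is strictly differentiable at $\bm{x}$ with zero derivative. Concretely, I define
\[
\phi(\bm{y}) \coloneqq \left\langle \nabla h\big(G(\bm{x})\big),\, G(\bm{y}) \right\rangle
\quad \text{and} \quad
r(\bm{y}) \coloneqq f(\bm{y}) - \phi(\bm{y}) = h\big(G(\bm{y})\big) - \left\langle \nabla h\big(G(\bm{x})\big),\, G(\bm{y}) \right\rangle.
\]
The two moves are: (i) prove that $r$ is strictly differentiable at $\bm{x}$ with $\nabla r(\bm{x}) = \bm{0}$, and (ii) invoke the sum rules for each of the three subdifferentials to pass from $f = r + \phi$ to the claimed equality $\partial_\triangleleft f(\bm{x}) = \partial_\triangleleft \phi(\bm{x})$.

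For step (i), I would write, for $\bm{y},\bm{y}'$ close to $\bm{x}$,
\[
r(\bm{y}) - r(\bm{y}') = \int_0^1 \Big\langle \nabla h\big(G(\bm{y}') + t\big(G(\bm{y}) - G(\bm{y}')\big)\big) - \nabla h\big(G(\bm{x})\big),\ G(\bm{y}) - G(\bm{y}') \Big\rangle \, \d t,
\]
and then bound this by combining the local Lipschitz property of $\nabla h$ near $G(\bm{x})$ with the local Lipschitz property of $G$ near $\bm{x}$. Denoting the respective local constants by $L_h$ and $L_G$, the integrand lives in a ball of radius $L_G \max\{\|\bm{y}-\bm{x}\|,\|\bm{y}'-\bm{x}\|\}$ around $G(\bm{x})$, so
\[
|r(\bm{y}) - r(\bm{y}')| \;\leq\; L_h L_G^2 \cdot \max\{\|\bm{y}-\bm{x}\|, \|\bm{y}'-\bm{x}\|\} \cdot \|\bm{y}-\bm{y}'\|,
\]
which implies $\lim_{\bm{y},\bm{y}'\to\bm{x}}|r(\bm{y}) - r(\bm{y}')|/\|\bm{y}-\bm{y}'\| = 0$; that is strict differentiability at $\bm{x}$ with zero gradient.

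For step (ii), observe that $\phi$ inherits local Lipschitzness and directional differentiability from $G$, so all three subdifferentials of $\phi$ at $\bm{x}$ are well-defined objects. Since $r$ is strictly differentiable at $\bm{x}$ with $\nabla r(\bm{x}) = \bm{0}$, the sum rule for the Clarke subdifferential (Fact~\ref{fact:rule}, first bullet) yields
\[
\partial_C f(\bm{x}) = \partial_C (r+\phi)(\bm{x}) = \nabla r(\bm{x}) + \partial_C \phi(\bm{x}) = \partial_C \phi(\bm{x});
\]
the analogous strict-differentiability sum rules for the Fr\'echet and limiting subdifferentials (\cite[Exercise~8.8(c) and Proposition~10.5]{rockafellar2009variational}) give the same identity with $\widehat{\partial}$ and $\partial$ in place of $\partial_C$. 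Collecting the three cases yields $\partial_\triangleleft f(\bm{x}) = \partial_\triangleleft \phi(\bm{x})$, which is exactly the asserted partial linearization.

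\textbf{Expected obstacle.} No real conceptual barrier arises once the linearization is chosen; the computation is uniform across all three subdifferentials because strict differentiability is their common ``clean'' case. The only subtlety is cosmetic: one must carefully verify that the sum rules being invoked genuinely apply without a regularity hypothesis on $\phi$, which they do precisely because $r$ is the strictly differentiable summand. Directional differentiability of $\phi$ (required for $\widehat{\partial}\phi$ and $\partial\phi$ to be defined via Definitions~\ref{def:subd-f}--\ref{def:subd-l}) is immediate from that of $G$ and linearity of $\bm{v}\mapsto\langle\nabla h(G(\bm{x})),\bm{v}\rangle$, so the argument closes cleanly.
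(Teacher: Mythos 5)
Your decomposition $f=r+\phi$ with $r(\bm{y})=h(G(\bm{y}))-\langle\nabla h(G(\bm{x})),G(\bm{y})\rangle$ is a genuinely different route from the paper's. The paper works with the same linearization $\phi$ but never isolates a strictly differentiable remainder: it proves $f^\circ(\bm{x};\cdot)=\phi^\circ(\bm{x};\cdot)$ directly via difference quotients for the Clarke case, matches directional derivatives via the B-differentiable chain rule for the Fr\'echet case, and outsources the limiting case to a margin-function chain rule of Mordukhovich. Your integral estimate $|r(\bm{y})-r(\bm{y}')|\leq L_hL_G^2\max\{\|\bm{y}-\bm{x}\|,\|\bm{y}'-\bm{x}\|\}\,\|\bm{y}-\bm{y}'\|$ is correct and is essentially the same $C^{1,1}$ bound the paper deploys inside its Clarke computation; packaging it as ``$r$ is strictly differentiable at $\bm{x}$ with $\nabla r(\bm{x})=\bm{0}$'' makes the Clarke and Fr\'echet conclusions immediate (\Cref{fact:rule} for Clarke; for Fr\'echet, $(r+\phi)'(\bm{x};\bm{d})=\phi'(\bm{x};\bm{d})$ and \Cref{def:subd-f} suffice). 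This is a cleaner and more uniform presentation than the paper's for those two cases.

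The one genuine gap is the limiting subdifferential. The results you cite do not deliver the sum rule you need: \cite[Proposition 10.5]{rockafellar2009variational} concerns separable sums $f_1(\bm{x}_1)+f_2(\bm{x}_2)$, and \cite[Exercise 8.8(c)]{rockafellar2009variational} requires the smooth summand to be $C^1$ on a \emph{neighborhood} of $\bm{x}$. Your $r$ is built from the merely locally Lipschitz map $G$, so it is strictly differentiable at the single point $\bm{x}$ and in general nowhere differentiable nearby; since $\partial(r+\phi)(\bm{x})$ is an outer limit of Fr\'echet subdifferentials at points $\bm{x}'\to\bm{x}$ where $\widehat{\partial}r(\bm{x}')$ may be empty, the naive pointwise argument does not close. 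The equality $\partial(r+\phi)(\bm{x})=\nabla r(\bm{x})+\partial\phi(\bm{x})$ under strict differentiability of $r$ at $\bm{x}$ alone is true, but its proof goes through the $\epsilon$-Fr\'echet-subdifferential representation of $\partial$ (see Mordukhovich, \emph{Variational Analysis and Generalized Differentiation I}, Proposition 1.107(ii)) --- exactly the machinery the paper's authors describe as ``somehow complicated'' and sidestep by citing the margin-function chain rule. Replace the two Rockafellar--Wets citations with that result (or reproduce its $\epsilon$-subdifferential argument) and your proof is complete.
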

\begin{proof} Let the partially linearized $f$ at $\bm{x}$ be $\bar{f}:\mathbb{R}^d\rightarrow\mathbb{R}$ defined as
	\[
	\bar{f}(\bm{y})\coloneqq \left\langle \nabla h\big(G(\bm{x})\big), G(\bm{y} ) \right\rangle.
	\]
	For the limiting subdifferential version, the claim directly follows from a margin function chain rule \cite[Theorem 6.5]{mordukhovich1996nonsmooth}. The Clarke subdifferential version directly follows from the relation between Clarke and limiting subdifferential \cite[Theorem 8.49]{rockafellar2009variational} and \cite[Theorem 6.5]{mordukhovich1996nonsmooth}. However, as the proof of \cite[Theorem 6.5]{mordukhovich1996nonsmooth} uses a perturbation argument to approximate $\partial f$ with $\epsilon$-Fr\'echet subdifferential, the machinery is somehow complicated. Here
	we give an elementary proof for the Clarke version from the primal perspective using tools from convex analysis. We show $f^\circ(\bm{x}; \bm{v}) = \bar{f}^\circ(\bm{x}; \bm{v})$ for any $\bm{v}\in\mathbb{R}^d$. Note that the Clarke generalized subderivative can be written as
\[
\begin{aligned}
f^\circ(\bm{x};\bm{v}) &= \limsup_{\substack{\bm{x}'\rightarrow \bm{x} \\ t \searrow 0}} \Delta_t f(\bm{x}')(\bm{v})
 \\
&= \lim_{\epsilon\searrow 0} \sup_{\|\bm{x}'- \bm{x}\|\leq \epsilon} \sup_{0 < t < \epsilon} \Delta_t f(\bm{x}')(\bm{v}),
\end{aligned}
\]
where the difference quotient function $\Delta_t f(\bm{x}'):\mathbb{R}^d \rightarrow\mathbb{R}$ of $f$ at $\bm{x}'$ and direction $\bm{v}$ is defined by
\[
\Delta_t f(\bm{x}')(\bm{v})\coloneqq \frac{f(\bm{x}'+t\bm{v}) - f(\bm{x}')}{t}.
\]
We assume $h$ is $L_h$-smooth near $g(\bm{x})$ and $G$ is $L_G$-Lipschitz near $\bm{x}$. We will use the following estimation (see \cite[Lemma 1.2.3]{nesterov2003introductory}) if $h$ is $L_h$-smooth at $\bm{z} \in \mathbb{R}^n$:
\[
-\frac{L_h}{2}\|\bm{z}'-\bm{z}\|^2 \leq h(\bm{z}') - h(\bm{z})- \left\langle \nabla h(\bm{z}), \bm{z}'-\bm{z}\right\rangle
\leq \frac{L_h}{2}\|\bm{z}'-\bm{z}\|^2.
\]
To prove $f^\circ(\bm{x};\bm{v}) \geq \bar{f}^\circ(\bm{x};\bm{v})$, we compute as follows
\begin{align*}
	\Delta_t f(\bm{x}')(\bm{v})&= \frac{h\big(G(\bm{x}'+t\bm{v})\big) - h\big(G(\bm{x}')\big)}{t} \\
	&\geq \frac{1}{t}\left\langle \nabla h\big(G(\bm{x}')\big), G(\bm{x}'+t\bm{v}) - G(\bm{x}') \right\rangle -\frac{L_h}{2t} \left\| G(\bm{x}'+t\bm{v}) - G(\bm{x}') \right\|^2 \\
	&\geq \frac{1}{t}\left\langle \nabla h\big(G(\bm{x})\big), G(\bm{x}'+t\bm{v}) - G(\bm{x}') \right\rangle  -\frac{L_h L_g^2}{2}\left\| \bm{v} \right\|^2 \cdot t 
	 -L_hL_g^2 \|\bm{v}\|\cdot \|\bm{x}-\bm{x}'\| \\
	 &= \Delta_t \bar{f}(\bm{x}')(\bm{v})  -\frac{L_h L_g^2}{2}\left\| \bm{v} \right\|^2 \cdot t 
	 -L_hL_g^2 \|\bm{v}\|\cdot \|\bm{x}-\bm{x}'\| 
\end{align*}
Therefore, for any $\bm{v}\in\mathbb{R}^d$, we know 
\[
\begin{aligned}
f^\circ(\bm{x};\bm{v}) &= \lim_{\epsilon\searrow 0} \sup_{\|\bm{x}'- \bm{x}\|\leq \epsilon} \sup_{0 < t < \epsilon} \Delta_t f(\bm{x}')(\bm{v}), \\
&\overset{(i)}{\geq} \lim_{\epsilon\searrow 0} \sup_{\|\bm{x}'- \bm{x}\|\leq \epsilon} \sup_{0 < t < \epsilon} \Delta_t \bar{f}(\bm{x}')(\bm{v})  - \left(\lim_{\epsilon\searrow 0} \frac{L_h L_g^2}{2}\left\| \bm{v} \right\|^2 \cdot \epsilon\right)
	 -\left(\lim_{\epsilon\searrow 0} L_hL_g^2 \|\bm{v}\|\cdot \epsilon\right) \\
&=\lim_{\epsilon\searrow 0} \sup_{\|\bm{x}'- \bm{x}\|\leq \epsilon} \sup_{0 < t < \epsilon} \Delta_t \bar{f}(\bm{x}')(\bm{v}) \\
&=\bar{f}^\circ(\bm{x};\bm{v}),
\end{aligned}
\]
where in $(i)$ we use $\sup f-g \geq \sup f - \sup g$.
For the converse direction $f^\circ(\bm{x};\bm{v}) \leq \bar{f}^\circ(\bm{x};\bm{v})$, we just compute similarly.
We have proved $f^\circ(\bm{x};\bm{v}) = \bar{f}^\circ(\bm{x};\bm{v}), \forall \bm{v}\in\mathbb{R}^d$. The claim follows from the correspondence between sublinear $f^\circ$ and convex $\partial_C f$ \cite[Proposition 2.1.5]{clarke1990optimization}.

Now we show the relation holds for Fr\'echet subdifferential. As $h$ and $G$ are locally Lipschitz and directional differentiable, they are Bouligand-differentiable (B-differentiable) according to \cite[Definition 4.1.1]{cui2021modern}. Then, by \cite[Proposition 4.1.2(a)]{cui2021modern}, we know that
\[
f'(\bm{x};\bm{d}) = h'\left( G(\bm{x}); G'(\bm{x};\bm{d}) \right) = \left\langle \nabla h\big(G(\bm{x})\big), G'(\bm{x};d) \right\rangle,
\]
where the directional derivative $G'(\bm{x};\bm{d})$ is defined element-wise as $\left( G'_i(\bm{x}; \bm{v}) \right)_{i=1}^n$ according to \cite[Definition 1.1.4]{cui2021modern}. Thus, combined with $\bar{f}'(\bm{x};\bm{d}) = \left\langle \nabla h\big(G(\bm{x})\big), G'(\bm{x};d) \right\rangle$, we have shown $\bar{f}'(\bm{x};\bm{d}) = f'(\bm{x};\bm{d})$ for any $\bm{d}$, which implies 
\[
\widehat{\partial} f(\bm{x}) = \widehat{\partial} \Big[\left\langle \nabla h\big(G(\bm{x})\big), G(\cdot ) \right\rangle \Big](\bm{x})
\]
by \cite[Exercise 8.4]{rockafellar2009variational} (note that for B-differentiable $f$, the subderivative $df(\bm{x})(\bm{d})$ in \cite[Exercise 8.4]{rockafellar2009variational} is equal to the directional derivative $f'(\bm{x};\bm{d})$  by \cite[Exercise 9.15]{rockafellar2009variational}).
\end{proof}

\begin{Remark}
\Cref{thm:linearization} is fundamentally different from the classic exact chain rule as the exact chain rule does not hold even for very simple function. Consider $h(a,b) = a-b$ and $G(x) = (|x|, |x|)$. We have $\partial_C [h\circ G](0) = \{0\} \subsetneq [-1,1]+[-1,1]=[-2,2]$. In contrast, by \Cref{thm:linearization}, we have $\partial_C [h\circ G](0) = \partial_C [|\cdot|-|\cdot|](0)=\{0\}$. One should compare \Cref{thm:linearization} with \cite[Theorem 2.3.9, Theorem 2.3.10]{clarke1990optimization}. Besides, \Cref{thm:linearization} implies \cite[Theorem 2.3.9(ii)]{clarke1990optimization}.
\end{Remark}

\begin{Corollary}\label{coro:prod}
	Let $f:\mathbb{R}\times \mathbb{R}^d \rightarrow \mathbb{R}$ be $f(u,\bm{x})=u\cdot g(\bm{x})$, where $g:\mathbb{R}^d\rightarrow\mathbb{R}$ is a Lipschitz function. Then, we have $\partial_\triangleleft f(u,\bm{x}) = \{g(\bm{x})\}\times \partial_\triangleleft [u\cdot g](\bm{x})$.
\end{Corollary}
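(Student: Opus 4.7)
The plan is to realize $f(u,\bm{x})=u\cdot g(\bm{x})$ as a composition to which \Cref{thm:linearization} applies, and then observe that the resulting partially linearized function is separable in $u'$ and $\bm{x}'$, so its subdifferential factorizes as a Cartesian product.

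Concretely, I would set $h:\mathbb{R}\times\mathbb{R}\rightarrow\mathbb{R}$, $h(a,b)\coloneqq a\cdot b$, and $G:\mathbb{R}\times\mathbb{R}^d\rightarrow\mathbb{R}\times\mathbb{R}$, $G(u,\bm{x})\coloneqq(u,g(\bm{x}))$, so that $f=h\circ G$. The function $h$ is $C^\infty$ with locally Lipschitz gradient $\nabla h(a,b)=(b,a)$, and $G$ is locally Lipschitz (and directionally differentiable under the paper's running hypotheses on $g$), so the premises of \Cref{thm:linearization} are met. Evaluating at $(u,\bm{x})$ gives $\nabla h(G(u,\bm{x}))=(g(\bm{x}),u)$, so the partially linearized surrogate is
\[
\Phi(u',\bm{x}')\coloneqq \langle\nabla h(G(u,\bm{x})),\, G(u',\bm{x}')\rangle \;=\; g(\bm{x})\cdot u' \;+\; u\cdot g(\bm{x}').
\]

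The key observation is that $\Phi$ is separable: it is the sum of a smooth linear function of $u'$ alone and the function $\bm{x}'\mapsto u\cdot g(\bm{x}')$ depending only on $\bm{x}'$. By the elementary product rule for subdifferentials of separable sums (for Clarke via \cite[Proposition 2.3.15]{clarke1990optimization}, and for Fr\'echet and limiting via \cite[Proposition 10.5]{rockafellar2009variational}; both are already invoked in \Cref{sec:roadmap}), together with the smooth sum rule to absorb the linear term $g(\bm{x})\cdot u'$, one obtains
\[
\partial_\triangleleft \Phi(u,\bm{x}) \;=\; \{g(\bm{x})\}\times \partial_\triangleleft[u\cdot g(\cdot)](\bm{x}).
\]
Combining this with the identity $\partial_\triangleleft f(u,\bm{x})=\partial_\triangleleft\Phi(u,\bm{x})$ supplied by \Cref{thm:linearization} yields the claim in all three subdifferential flavors ($\partial_C$, $\partial$, $\widehat{\partial}$) uniformly.

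There is no real obstacle: the only subtle point is to confirm the directional differentiability hypothesis in \Cref{thm:linearization} when invoking the Fr\'echet/limiting versions, but this is inherited from the assumed directional differentiability of $g$ (which, in every application of \Cref{coro:prod} in \Cref{sec:roadmap}, is a piecewise linear function of the form $\bm{w}_k\mapsto\sum_i\rho_i\max\{\bm{w}_k^\top\bm{x}_i,0\}$, hence B-differentiable). All other steps are purely calculational and use standard separability of subdifferentials on product spaces.
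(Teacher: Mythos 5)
Your proposal is correct and takes essentially the same route as the paper's proof: both realize $f$ as $h\circ G$ with $h(a,b)=ab$ and $G(u,\bm{x})=(u,g(\bm{x}))$, invoke \Cref{thm:linearization} to pass to the separable surrogate $g(\bm{x})\cdot u' + u\cdot g(\bm{x}')$, and then factor its subdifferential as a Cartesian product. Your explicit remark on the directional-differentiability hypothesis is a careful touch the paper's one-line proof leaves implicit, but it does not change the argument.
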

\begin{proof}
	Let $h:\mathbb{R}\times \mathbb{R}\rightarrow \mathbb{R}$ be $h(a,b)=a \cdot b$. It is easy to see $h$ is smooth at any $(a,b)$. Let $C(u, \bm{x}) = (u, g(\bm{x})), \bar{u}=u$ and $\bar{\bm{x}}=\bm{x}$. As $f(u,\bm{x}) = h\circ C(u,\bm{x})$, by \Cref{thm:linearization}, we know
	\[
	\partial_\triangleleft f(u,\bm{x}) = \partial_\triangleleft \big[g(\bar{\bm{x}})\cdot u + \bar{u}\cdot g(\bm{x})\big](u,\bm{x}) = \{g(\bm{x})\}\times \partial_\triangleleft [u\cdot g](\bm{x}),
	\]
	as required.
\end{proof}

\subsection{Exact Chain Rule of a Non-Separable Piecewise Linear Function}\label{sec:prf-CR-pl}
In this section, we consider the validity of the exact subdifferential chain rule of a simple piecewise-linear function, which is defined by
\[
f_{\textsf{PL}}(\bm{w})\coloneqq \sum_{i=1}^n \max\lB \bm{x}_i^\top \bm{w},0 \rB - \sum_{j=1}^m \max\lB \bm{y}_j^\top \bm{w}, 0 \rB.
\]

\subsubsection{Chain Rule for Clarke Subdifferential}
\begin{Theorem}[Clarke]\label{thm:pl-CR}
	Suppose $\bm{x}_i^\top \bm{w} = \bm{y}_j^\top \bm{w} = 0$ for any $i\in [n], j \in [m]$. We have the exact Clarke subdifferential chain rule
	\[
	\partial_C f_{\textsf{PL}} (\bm{w}) = G_{\textsf{PL}}^C \coloneqq \sum_{i=1}^n \bm{x}_i\cdot [0,1] + \sum_{j=1}^m (-\bm{y}_j) \cdot[0,1]
	\]
	if and only if
	$
	\spn \big(\lB \bm{x}_i \rB_{i=1}^n \big) \cap 
	\spn \big(\lB \bm{y}_j \rB_{j=1}^m \big) = \{\bm{0}\}.
	$
\end{Theorem}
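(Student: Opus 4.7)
My plan is to establish the set equality by comparing support functions, since both $\partial_C f_{\textsf{PL}}(\bm{w})$ and $G^C_{\textsf{PL}}$ are nonempty, convex, and compact. The support function of $G^C_{\textsf{PL}}$ in direction $\bm{d}$ has the closed form $\sum_i \max\{\bm{x}_i^\top \bm{d},0\}+\sum_j\max\{-\bm{y}_j^\top\bm{d},0\}$, while the support function of $\partial_C f_{\textsf{PL}}(\bm{w})$ equals the Clarke generalized derivative $f_{\textsf{PL}}^\circ(\bm{w};\bm{d})$. Because $f_{\textsf{PL}}$ is piecewise linear and all kinks pass through $\bm{w}$, I will write $f_{\textsf{PL}}^\circ(\bm{w};\bm{d})$ explicitly as the maximum of $\sum_i \mathbf{1}_{\bm{x}_i^\top \bm{d}'>0}\bm{x}_i^\top\bm{d}-\sum_j \mathbf{1}_{\bm{y}_j^\top\bm{d}'>0}\bm{y}_j^\top\bm{d}$ over generic perturbation directions $\bm{d}'$ (those avoiding every hyperplane $\bm{x}_i^\perp,\bm{y}_j^\perp$). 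The inequality $\leq$ between the two support functions is automatic from the fuzzy sum rule, so equality reduces to whether, for each $\bm{d}$, some generic $\bm{d}'$ realizes the sign pattern $(\mathbf{1}_{\bm{x}_i^\top\bm{d}>0},\mathbf{1}_{\bm{y}_j^\top\bm{d}<0})$ that attains $\sigma_{G^C_{\textsf{PL}}}(\bm{d})$.

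For sufficiency I will dualize: SQ is equivalent to $\spn\{\bm{x}_i\}^\perp+\spn\{\bm{y}_j\}^\perp=\mathbb{R}^d$. Given a generic $\bm{d}$, I decompose $2\bm{d}=\bm{u}_Y-\bm{u}_X$ with $\bm{u}_X\in\spn\{\bm{x}_i\}^\perp$ and $\bm{u}_Y\in\spn\{\bm{y}_j\}^\perp$, and set $\bm{d}'=\bm{d}+\bm{u}_X$. Then $\bm{x}_i^\top\bm{d}'=\bm{x}_i^\top\bm{d}$ and $\bm{y}_j^\top\bm{d}'=-\bm{y}_j^\top\bm{d}$ for all $i,j$, so $\bm{d}'$ inherits genericity from $\bm{d}$ and realizes exactly the optimal sign pattern. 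The support-function identity then holds on the open dense set of generic $\bm{d}$, and extends to all of $\mathbb{R}^d$ by continuity (support functions of compact convex sets are Lipschitz).

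For necessity I will argue contrapositively. Assume SQ fails, so there exist $\bm{a},\bm{b}$, not both zero, with $X\bm{a}=Y\bm{b}\neq\bm{0}$, where $X,Y$ collect the $\bm{x}_i$'s and $\bm{y}_j$'s as columns. By \Cref{lem:gordan}, the strict sign system requiring $\sgn(\bm{x}_i^\top\bm{d}')=\sgn(a_i)$ for $a_i\neq 0$ and $\sgn(\bm{y}_j^\top\bm{d}')=-\sgn(b_j)$ for $b_j\neq 0$ is infeasible: indeed, splitting $(\bm{a},-\bm{b})$ into its positive and negative parts supplies precisely the required nontrivial nonnegative combination. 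I will then select a generic $\bm{d}$ whose signs on $X^\top\bm{d}$ and $Y^\top\bm{d}$ agree with $\sgn(\bm{a})$ and $\sgn(\bm{b})$ on the nonzero-coordinate indices, using a second Gordan/perturbation step within the subspace $\spn\{\bm{x}_i\}+\spn\{\bm{y}_j\}$. For this $\bm{d}$, the sign pattern attaining $\sigma_{G^C_{\textsf{PL}}}(\bm{d})$ is exactly the infeasible one, and every realizable pattern differs in at least one sign, incurring a strictly positive loss at that coordinate because $\bm{d}$ is generic. This yields $f_{\textsf{PL}}^\circ(\bm{w};\bm{d})<\sigma_{G^C_{\textsf{PL}}}(\bm{d})$, hence $\partial_C f_{\textsf{PL}}(\bm{w})\subsetneq G^C_{\textsf{PL}}$.

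The crux is the necessity half. Two subtleties demand care: first, guaranteeing the existence of a generic $\bm{d}$ matching the prescribed signs, which may fail in degenerate subconfigurations and forces me to restrict attention to the subspace spanned by the involved data, then lift back; second, ruling out accidental ties, where an unrelated realizable pattern happens to reproduce the same gradient-in-direction inner product as the infeasible target, which I will exclude by further perturbing $\bm{d}$ within the same top-dimensional cell of the hyperplane arrangement so that distinct sign patterns on $(X,Y)$ translate into distinct values of $\sum_i s_i\bm{x}_i^\top\bm{d}-\sum_j t_j\bm{y}_j^\top\bm{d}$. Once these are handled, the strict inequality of support functions at a single direction suffices to contradict the hypothesized set equality.
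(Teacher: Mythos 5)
Your sufficiency argument is correct and takes a genuinely different route from the paper's. The paper proves sufficiency by an SVD-based reparametrization that separates the $\bm{x}$-block from the $\bm{y}$-block and then invokes the smooth chain rule; you instead compare support functions, using $\big(\spn\{\bm{x}_i\}\cap\spn\{\bm{y}_j\}\big)^\perp=\spn\{\bm{x}_i\}^\perp+\spn\{\bm{y}_j\}^\perp$ and the decomposition $2\bm{d}=\bm{u}_Y-\bm{u}_X$, $\bm{d}'=\bm{d}+\bm{u}_X$, to produce for every generic $\bm{d}$ a generic $\bm{d}'$ realizing exactly the sign pattern attaining $\sigma_{G^C_{\textsf{PL}}}(\bm{d})$. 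This is elementary and clean, and additivity of support functions over Minkowski sums would even let you bypass the paper's \Cref{lem:localization} in the reduction steps.

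The necessity half, however, has a genuine gap at the step ``select a generic $\bm{d}$ whose signs on $X^\top\bm{d}$ and $Y^\top\bm{d}$ agree with $\sgn(\bm{a})$ and $\sgn(\bm{b})$ on the nonzero coordinates.'' For an arbitrary witness $(\bm{a},\bm{b})$ of the failure of SQ no such $\bm{d}$ need exist, and then your argument produces no direction at which the two support functions differ. Concretely, take $\bm{x}_1=\bm{e}_1$, $\bm{x}_2=\bm{e}_2$, $\bm{y}_1=\bm{e}_1+\bm{e}_2$, $\bm{y}_2=-\bm{e}_1-2\bm{e}_2$ in $\mathbb{R}^2$ (so SQ fails) with the witness $2\bm{x}_1+\bm{x}_2=3\bm{y}_1+\bm{y}_2\neq\bm{0}$, i.e.\ $\bm{a}=(2,1)$, $\bm{b}=(3,1)$: the prescribed sign system is $d_1>0$, $d_2>0$, $d_1+d_2>0$, $-d_1-2d_2>0$, which is infeasible --- the Gordan certificate is the witness dependence itself, but now with the ``wrong'' signs. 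Restricting to the subspace spanned by the data does not help, since this example already spans it, and your ``second Gordan/perturbation step'' addresses genericity and ties, not this obstruction. What is actually needed is a careful \emph{choice of the witness}: one must pass to a minimal, circuit-like dependence meeting both blocks (here $\bm{x}_1+\bm{x}_2-\bm{y}_1=\bm{0}$, i.e.\ $\bm{a}=(1,1)$, $\bm{b}=(1,0)$, for which $\bm{d}=(1,1)$ exists, the target pattern is unrealizable by \Cref{lem:gordan}, and every realizable pattern strictly loses on a constrained coordinate). Producing such a witness --- dropping generators with zero coefficients, enforcing linear independence of what remains, and transferring strictness back to the full configuration --- is exactly the bulk of the paper's \Cref{lem:s-nece}, and it is the piece your proposal is missing; the two subtleties you flag (genericity and accidental ties) are in fact the easy parts, since the optimal pattern is coordinatewise optimal and any deviation at a generic constrained coordinate incurs a strictly positive loss.
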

\begin{proof}
We have divided the proof into \Cref{lem:s-nece} and \Cref{lem:s-suff}.
\end{proof}

\begin{Lemma}[Necessary]\label{lem:s-nece}
	If there exists $\bm{v}\in\mathbb{R}^d$ such that
	\[
	\bm{0}\neq \bm{v}\in\spn \big(\lB \bm{x}_i \rB_{i=1}^n \big) \cap 
	\spn \big(\lB \bm{y}_j \rB_{j=1}^m \big),
	\]
	then $
	\partial_C f_{\textsf{PL}} (\bm{w}) \subsetneq G_{\textsf{PL}}^C.
	$
\end{Lemma}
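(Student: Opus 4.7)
The plan is to distinguish the two convex compact sets $\partial_C f_{\textsf{PL}}(\bm{w})$ and $G_{\textsf{PL}}^C$ through their support functions; once a direction $\bm{d}^*$ with $\sigma_{\partial_C f_{\textsf{PL}}(\bm{w})}(\bm{d}^*) < \sigma_{G_{\textsf{PL}}^C}(\bm{d}^*)$ is exhibited, the strict inclusion follows because support functions separate nonempty convex compact sets. A direct computation gives
\[
\sigma_{G_{\textsf{PL}}^C}(\bm{d}) = \sum_{i=1}^n \max\{\bm{x}_i^\top \bm{d},0\} + \sum_{j=1}^m \max\{-\bm{y}_j^\top \bm{d},0\},
\]
attained by the extreme pattern $S = \{i:\bm{x}_i^\top\bm{d}>0\}$, $T = \{j:\bm{y}_j^\top\bm{d}<0\}$. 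Since $f_{\textsf{PL}}$ is piecewise linear with every kink hyperplane passing through $\bm{w}$, standard facts give $\partial_C f_{\textsf{PL}}(\bm{w}) = \conv \partial_B f_{\textsf{PL}}(\bm{w})$ (see \Cref{def:subd-g}), so
\[
\sigma_{\partial_C f_{\textsf{PL}}(\bm{w})}(\bm{d}) = \max_{\bm{e}} \left[ \sum_{i=1}^n \mathbf{1}_{\bm{x}_i^\top \bm{e}>0}\,\bm{x}_i^\top \bm{d} - \sum_{j=1}^m \mathbf{1}_{\bm{y}_j^\top \bm{e}>0}\,\bm{y}_j^\top \bm{d} \right],
\]
with the maximum taken over generic $\bm{e}$ avoiding the finite union $\{\bm{x}_i^\top\bm{e}=0\}\cup\{\bm{y}_j^\top\bm{e}=0\}$.

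From the hypothesis, fix $\bm{v} = \sum_i \alpha_i\bm{x}_i = \sum_j \beta_j\bm{y}_j \neq \bm{0}$ and pick $\bm{d}^*$ so that $\sgn(\bm{x}_i^\top \bm{d}^*) = \sgn(\alpha_i)$ whenever $\alpha_i\neq 0$ and $\sgn(\bm{y}_j^\top \bm{d}^*) = \sgn(\beta_j)$ whenever $\beta_j\neq 0$. If some generic $\bm{e}$ were to realize the optimal pattern for $\sigma_{G_{\textsf{PL}}^C}(\bm{d}^*)$, then $\sgn(\bm{x}_i^\top\bm{e}) = \sgn(\bm{x}_i^\top\bm{d}^*) = \sgn(\alpha_i)$ on every index with $\alpha_i\neq 0$, while $\sgn(\bm{y}_j^\top\bm{e}) = -\sgn(\bm{y}_j^\top\bm{d}^*) = -\sgn(\beta_j)$ on every index with $\beta_j\neq 0$. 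Evaluating $\bm{v}^\top\bm{e}$ through the two representations of $\bm{v}$ then produces $\bm{v}^\top\bm{e} = \sum_i \alpha_i(\bm{x}_i^\top\bm{e}) > 0$ and $\bm{v}^\top\bm{e} = \sum_j \beta_j(\bm{y}_j^\top\bm{e}) < 0$, a contradiction. Consequently no realizable sign pattern attains the support of $G_{\textsf{PL}}^C$ at $\bm{d}^*$, the support gap is strictly positive, and $\partial_C f_{\textsf{PL}}(\bm{w}) \subsetneq G_{\textsf{PL}}^C$ follows.

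The main obstacle is guaranteeing the existence of a direction $\bm{d}^*$ simultaneously satisfying both sign-matching requirements. This is a strict linear feasibility problem whose solvability, by Gordan's alternative (\Cref{lem:gordan}), is equivalent to ruling out a nontrivial nonnegative combination of $\{\sgn(\alpha_i)\bm{x}_i\}_{\alpha_i\neq 0} \cup \{-\sgn(\beta_j)\bm{y}_j\}_{\beta_j\neq 0}$ summing to $\bm{0}$. The remedy is to replace $(\alpha,\beta)$ by an irreducible representative of $\bm{v}$ modulo $\k X\oplus \k Y$ in the kernel of $[X\mid -Y]$, so that no such combination can vanish nontrivially; together with a small generic perturbation of $\bm{d}^*$ ensuring $\bm{x}_i^\top \bm{d}^*$ and $\bm{y}_j^\top \bm{d}^*$ remain nonzero on the relevant indices, this gives both the existence of $\bm{d}^*$ and the strict support gap.
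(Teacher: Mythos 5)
Your support-function strategy is genuinely different from the paper's route and, where it is complete, it is sound: the formula for $\sigma_{G_{\textsf{PL}}^C}$, the description of $\sigma_{\partial_C f_{\textsf{PL}}(\bm{w})}$ as a maximum over the finitely many activation patterns realizable by generic $\bm{e}$ (via $\partial_C f_{\textsf{PL}}(\bm{w})=\conv\partial_B f_{\textsf{PL}}(\bm{w})$), and the sign-matching contradiction $\bm{v}^\top\bm{e}>0$ versus $\bm{v}^\top\bm{e}<0$ are all correct, and together they would bypass the paper's reduction to linearly independent subfamilies, its localization lemma (\Cref{lem:localization}), and its extreme-point argument (\Cref{lem:extremept}). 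The gap is exactly where you flag ``the main obstacle'': the existence of $\bm{d}^*$. Your remedy --- pass to an ``irreducible representative of $\bm{v}$ modulo $\k X\oplus\k Y$'' --- is not only left undefined but is false as a sufficient condition for Gordan feasibility. Take $d=2$, $\bm{x}_1=(1,0)$, $\bm{x}_2=(1,1)$, $\bm{y}_1=(1,0)$, $\bm{y}_2=(1,-1)$, $\bm{w}=\bm{0}$, and $\bm{v}=(0,1)$. Both $[\bm{x}_1\,|\,\bm{x}_2]$ and $[\bm{y}_1\,|\,\bm{y}_2]$ are invertible, so $\bm{v}$ has the \emph{unique} (hence irreducible by any reasonable definition) representation $\alpha=(-1,1)$, $\beta=(1,-1)$; the sign system then demands $d_1<0$ from $\sgn(\alpha_1)\,\bm{x}_1^\top\bm{d}>0$ and $d_1>0$ from $\sgn(\beta_1)\,\bm{y}_1^\top\bm{d}>0$, and is infeasible --- even though the lemma's hypothesis and conclusion both hold in this instance.

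What the example shows is that no representation of a \emph{fixed} $\bm{v}$ need work: you must also be free to replace $\bm{v}$ by a different nonzero element of $\spn(\{\bm{x}_i\})\cap\spn(\{\bm{y}_j\})$ (here $\bm{v}'=(1,0)=\bm{x}_1=\bm{y}_1$ succeeds). A correct completion is to minimize $|\supp\alpha|+|\supp\beta|$ jointly over all nonzero $\bm{v}$ in the intersection and all of their representations, and then show that any nontrivial Gordan certificate $\sum_i\lambda_i\sgn(\alpha_i)\bm{x}_i+\sum_j\mu_j\sgn(\beta_j)\bm{y}_j=\bm{0}$ with $\lambda,\mu\geq\bm{0}$ permits the sign-preserving shift $\alpha_i\mapsto\alpha_i-t\lambda_i\sgn(\alpha_i)$, $\beta_j\mapsto\beta_j+t\mu_j\sgn(\beta_j)$, which at a suitable $t^*>0$ annihilates a coordinate of $\alpha$ while the resulting element of the intersection stays nonzero (if it vanished, the dual certificate would instead yield a support reduction for $\beta$ at the original $\bm{v}$), contradicting minimality. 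This exchange argument is a substantive missing piece --- roughly as long as the rest of your proof --- so as written the proposal does not yet constitute a proof; with it supplied, you would have a legitimately shorter and more conceptual alternative to the paper's Bouligand/extreme-point construction.
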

\begin{proof}
	We first prove that assuming certain regularity on $\{\bm{x}_i\}_{i=1}^n$ and $\{\bm{y}_j\}_{j=1}^{m}$ is without loss of generality. Let the indices set $\mathcal{J}_x \subseteq [n]$  be a selection from $\{\bm{x}_i\}_{i=1}^n$ such that $\lB \bm{x}_i \rB_{i\in\mathcal{J}_x}$ are linearly independent and satisfy
	\[
	\spn \Big(\lB \bm{x}_i \rB_{i\in\mathcal{J}_x} \Big) = \spn \Big(\lB \bm{x}_i \rB_{i=1}^n \Big).
	\]
	Similarly, we define $\mathcal{J}_y \subseteq [m]$  for $\{\bm{y}_j\}_{j=1}^m$.
	Then, we write
	\[
f_{\textsf{PL}}(\bm{w})= f_{\textsf{PL}1}(\bm{w}) + f_{\textsf{PL}2}(\bm{w}),
\]
where
\[
\begin{aligned}
f_{\textsf{PL}1}(\bm{w})&\coloneqq\sum_{i\in[n]\backslash\mathcal{J}_x} \max\lB \bm{x}_i^\top \bm{w},0 \rB - \sum_{j\in[m]\backslash\mathcal{J}_y } \max\lB \bm{y}_j^\top \bm{w}, 0 \rB,\\
f_{\textsf{PL}2}(\bm{w})&\coloneqq\sum_{i\in\mathcal{J}_x} \max\lB \bm{x}_i^\top \bm{w},0 \rB - \sum_{j\in\mathcal{J}_y } \max\lB \bm{y}_j^\top \bm{w}, 0 \rB.	
\end{aligned}
\]
By the fuzzy sum rule \cite[Proposition 2.3.3]{clarke1990optimization}, we know
\[
\partial_C f_{\textsf{PL}}(\bm{w}) \subseteq \partial_C f_{\textsf{PL}1}(\bm{w}) + \partial_C f_{\textsf{PL}2}(\bm{w}) \subseteq \partial_C f_{\textsf{PL}1}(\bm{w}) + G_{\textsf{PL}2}^C \subseteq G_{\textsf{PL}}^C,
\]
where we define $G_{\textsf{PL}2}^C\coloneqq \sum_{i\in\mathcal{J}_x} \bm{x}_i\cdot [0,1] + \sum_{j\in\mathcal{J}_y} (-\bm{y}_j) \cdot[0,1]$.
Thus, to prove $\partial_C f_{\textsf{PL}}(\bm{w}) \subsetneq G_{\textsf{PL}}^C$, by \Cref{lem:localization} and \cite[Proposition 2.1.2(a)]{clarke1990optimization}, we only need to show $\partial_C f_{\textsf{PL}2}(\bm{w}) \subsetneq G_{\textsf{PL}2}^C$. So, by abuse of notation and focus on $f_{\textsf{PL}2}$, we assume $\{\bm{x}_i\}_{i=1}^n$ are linearly independent. Similarly, we assume $\{\bm{y}_j\}_{j=1}^m$ are linearly independent. In the following, we may use \Cref{lem:localization} and above argument implicitly to assume regularity for simplicity. As $\bm{v}\in\spn \big(\lB \bm{x}_i \rB_{i=1}^n \big) \cap 
	\spn \big(\lB \bm{y}_j \rB_{j=1}^m \big)$, we write
	\begin{equation}\label{eq:prf-def-of-v}
			\bm{0}\neq \bm{v} = \sum_{i=1}^n a_i \cdot \bm{x}_i = \sum_{j=1}^m b_i \cdot \bm{y}_j.
	\end{equation}
	It is safe to assume $a_i\neq 0, b_j \neq 0, \forall i \in [n],j\in[m]$. Fix $\bm{y}_m$. We can further assume $\{\bm{x}_i\}_{i=1}^n \cup \{\bm{y}_j\}_{j=1}^{m-1}$ are linearly independent. To see this, suppose to the contrary $\{\bm{x}_i\}_{i=1}^n \cup \{\bm{y}_j\}_{j=1}^{m-1}$ are not linearly independent, we get
	$
	\bm{0} = \sum_{i=1}^n p_i\cdot \bm{x}_i + \sum_{j=1}^{m-1}q_j\cdot \bm{y}_j.
	$ 
	We know that there exist $j' \in [m-1]$ such that $q_{j'}\neq 0$, as otherwise by linear independence of $\{\bm{x}_i\}_{i=1}^m$, for any $i\in[n]$, it holds that $p_i=0$, hence that $\{\bm{x}_i\}_{i=1}^n \cup \{\bm{y}_j\}_{j=1}^{m-1}$ are linearly independent.
	As $q_{j'}\neq 0$, we have 
	\[
	\bm{y}_{j'} = -\sum_{i=1}^n (p_i/q_{j'})\cdot \bm{x}_i-\sum_{j\in [m-1]\backslash\{j'\}} (q_j/q_{j'})\cdot \bm{y}_j.
	\]
	Plug in to \Cref{eq:prf-def-of-v} and $\bm{y}_{j'}$ is removed. Repeat this procedure and by abuse of notation, we have $\{\bm{x}_i\}_{i=1}^n \cup \{\bm{y}_j\}_{j=1}^{m-1}$ are linearly independent. After that, we exam $\{a_i\}_i$ and $\{b_j\}_j$. We remove $\bm{x}_i$ if $a_i=0$ and remove $\bm{y}_j$ if $b_j=0$, which is without of generality by \Cref{lem:localization}.	
	It is possible that all $\{\bm{y}_j\}_{j=1}^{m-1}$ are removed and we get $m=1$ and $\bm{y}_m \in \spn\big(\{\bm{x}_i\}_{i=1}^n\big)$. But as $\bm{y}_m \neq \bm{0}$, we always have $n\geq 1$. Then, we can write
	\begin{equation}\label{eq:ym}
			\bm{y}_m = \sum_{i=1}^n \alpha_i \bm{x}_i + \sum_{j=1}^{m-1} \beta_j \bm{y}_j,
	\end{equation}
	with $\alpha_i \neq 0, \beta_j \neq 0$ for any $i \in [n], j \in [m-1]$. Note that, for such $\{\bm{x}_i\}_{i=1}^n$ and $\{\bm{y}_j\}_{j=1}^{m-1}$, we have the exact chain rule
		\[
	\partial_C \left[ \sum_{i=1}^n \max\lB \bm{x}_i^\top \cdot,0 \rB - \sum_{j=1}^{m-1} \max\lB \bm{y}_j^\top \cdot, 0 \rB\right](\bm{w}) = \sum_{i=1}^n \bm{x}_i\cdot [0,1] + \sum_{j=1}^{m-1} (-\bm{y}_j) \cdot[0,1]
	\]
	by using \cite[Theorem 2.3.10]{clarke1990optimization} and linear independence. We proceed to show that $\partial_C f_{\textsf{PL}}(\bm{w}) \subsetneq G_{\textsf{PL}}^C$ by exhibiting an element in $G_{\textsf{PL}}^C \backslash \partial_C f_{\textsf{PL}}(\bm{w})$. Let $\bm{\theta}\in\mathbb{R}_+^{n+m}$ and we define
	\[\theta_i =     \left\{ \begin{array}{rcl}
         |\alpha_i| & \mbox{for}
         & 1\leq i \leq n \\ |\beta_{i-n}|  & \mbox{for} & n+1 \leq i \leq n+m-1, \\
         1 & \mbox{for} & i = m+n
                \end{array}\right. \quad
	\bm{A} = \left[ \begin{array}{c}
     \sgn(\alpha_1)\cdot \bm{x}_1^\top \\
     \vdots \\
     \sgn(\alpha_n)\cdot \bm{x}_n^\top \\
      \sgn(\beta_1)\cdot \bm{y}_1^\top \\
     \vdots \\
     \sgn(\beta_{m-1})\cdot \bm{y}_{m-1}^\top \\
     -\bm{y}_m^\top
     \end{array} \right]
     \in \mathbb{R}^{(n+m)\times d}.
     	\]
	Note that 
	\[
	\bm{A}^\top \bm{\theta} = \sum_{i=1}^n \alpha_i \bm{x}_i + \sum_{j=1}^{m-1} \beta_j \bm{y}_j - \bm{y}_m = \bm{0}.
	\]
	By Gordan's Theorem in \Cref{lem:gordan}, we have certified the nonexistence of direction $\bm{d}\in\mathbb{R}^d$ such that
	\begin{equation}\label{eq:sgn-gordan-1}
	\left\{ \begin{array}{rcl}
         \sgn(\alpha_i)\cdot \bm{d}^\top \bm{x}_i < 0 & \mbox{for}
         & i \in [n] \\
         \sgn(\beta_j)\cdot \bm{d}^\top \bm{y}_j < 0 & \mbox{for}
         & j \in [m-1]\\
          \bm{d}^\top \bm{y}_m > 0 &
         & 
             \end{array}\right..	
	\end{equation}
	By $-\bm{A}^\top \bm{\theta} = \bm{0}$, similarly, we certify the nonexistence of direction $\bm{d}\in\mathbb{R}^d$ such that
	\begin{equation}\label{eq:sgn-gordan-2}
	\left\{ \begin{array}{rcl}
         \sgn(\alpha_i)\cdot \bm{d}^\top \bm{x}_i > 0 & \mbox{for}
         & i \in [n] \\
         \sgn(\beta_j)\cdot \bm{d}^\top \bm{y}_j > 0 & \mbox{for}
         & j \in [m-1]\\
          \bm{d}^\top \bm{y}_m < 0 &
         & 
             \end{array}\right..
	\end{equation}
	Let the Bouligand subdifferential of $f_{\textsf{PL}}$ at $\bm{w}$ be $\partial_B f_{\textsf{PL}}(\bm{w})$; see \cite[Definition 4.3.1]{cui2021modern}. Define
	\begin{align*}
	\bm{\nabla}_1 &\coloneqq \sum_{i=1}^m \mathbf{1}_{\alpha_i > 0} \cdot \bm{x}_i - \sum_{j=1}^{m-1}\mathbf{1}_{\beta_j > 0}\cdot \bm{y}_j, \tag{compare \eqref{eq:sgn-gordan-1}} \\
	\bm{\nabla}_2 &\coloneqq \sum_{i=1}^m \mathbf{1}_{\alpha_i < 0} \cdot \bm{x}_i - \sum_{j=1}^{m-1}\mathbf{1}_{\beta_j < 0}\cdot \bm{y}_j - \bm{y}_m. \tag{compare \eqref{eq:sgn-gordan-2}} 
	\end{align*}
	By \cite[Proposition 4.4.8(c)]{cui2021modern} and the nonexistences of $\bm{d}$ for \eqref{eq:sgn-gordan-1} and \eqref{eq:sgn-gordan-2}, we have proved that $\bm{\nabla}_1, \bm{\nabla}_2 \notin \partial_B f_{\textsf{PL}}(\bm{w})$. Let us define a set
	\[
	\overline{G}_{\textsf{PL}}^C \coloneqq \sum_{i=1}^n \bm{x}_i \cdot \{0,1\} + \sum_{j=1}^m (-\bm{y}_j)\cdot \{0,1\} \subseteq \mathbb{R}^d.
	\]
	Besides, using \cite[Proposition 4.4.8(c)]{cui2021modern}, we have $\partial_B f_{\textsf{PL}}(\bm{w}) \subseteq \overline{G}_{\textsf{PL}}^C\backslash\{\bm{\nabla}_1, \bm{\nabla}_2\}$.
	Then, with \cite[Theorem 9.61]{rockafellar2009variational}, it follows that
	\[
	\partial_C f_{\textsf{PL}}(\bm{w}) = \conv(\partial_B f_{\textsf{PL}}(\bm{w})) \subseteq \conv(\overline{G}_{\textsf{PL}}^C\backslash\{\bm{\nabla}_1, \bm{\nabla}_2\}).
	\]
	 Therefore, to prove $\partial_C f_{\textsf{PL}}(\bm{w}) \subsetneq G_{\textsf{PL}}^C$, we only need to show
	\[
	\bm{\nabla}_1 \in G_{\textsf{PL}}^C\backslash \conv\left(\overline{G}_{\textsf{PL}}^C\backslash\{\bm{\nabla}_1, \bm{\nabla}_2\}\right).
	\]
	To this end, we define two sets satisfying  $\overline{G}_{\textsf{PL}}^C = P_1 \cup P_2$ as
	\[
	\begin{aligned}
		P_1 &\coloneqq \sum_{i=1}^n \bm{x}_i \cdot \{0,1\} + \sum_{j=1}^{m-1} (-\bm{y}_j)\cdot \{0,1\}, \\
		P_2 &\coloneqq \sum_{i=1}^n \bm{x}_i \cdot \{0,1\} + \sum_{j=1}^{m-1} (-\bm{y}_j)\cdot \{0,1\} - \bm{y}_m.
	\end{aligned}
	\]
	Thus, we can write $\overline{G}_{\textsf{PL}}^C\backslash\{\bm{\nabla}_1, \bm{\nabla}_2\} \subseteq \left(P_1 \backslash \{\bm{\nabla}_1\} \right)\cup\left( P_2 \backslash \{\bm{\nabla}_2\}  \right)$. 
	It is evident that $\bm{\nabla}_1 \in G_{\textsf{PL}}$.
	If $\bm{\nabla}_1 \in \conv\left(\overline{G}_{\textsf{PL}}^C\backslash\{\bm{\nabla}_1, \bm{\nabla}_2\}\right)$, we have
	\[
	\bm{\nabla}_1 = \lambda \bm{g}^{P_1} + (1-\lambda)\bm{g}^{P_2}, \quad\textnormal{with}\quad 
	 \bm{g}^{P_1} \in \conv\left( P_1 \backslash \{\bm{\nabla}_1\} \right),
	 \bm{g}^{P_2} \in \conv\left( P_2 \backslash \{\bm{\nabla}_2\} \right).
	\]
	We now show that it must be $\lambda = 1$ by considering three cases:
	
	\paragraph{Case 1.} $\exists i \in [n]: \alpha_i > 0$. Without loss of generality, we assume $\alpha_1 > 0$. Note that for any $\bm{g}^{P_2} \in \conv\left( P_2 \backslash \{\bm{\nabla}_2\} \right)$, using the representation of $\bm{y}_m$ in \Cref{eq:ym}, we have 
	\[
	\bm{g}^{P_2} = \sum_{i=1}^n (\gamma_i - \alpha_i)\cdot \bm{x}_i + \sum_{j=1}^{m-1} (\gamma_{n+j}+\beta_j)\cdot(-\bm{y}_j),
	\]
	where $\gamma_k \in [0,1], \forall k \in [n+m-1]$. Similarly, we write $\bm{g}^{P_1} \in \conv\left( P_1 \backslash \{\bm{\nabla}_1\} \right)$ as
	\[
	\bm{g}^{P_1} = \sum_{i=1}^n \mu_i\cdot \bm{x}_i + \sum_{j=1}^{m-1} \mu_{n+j}\cdot(-\bm{y}_j),
	\]
	where $\mu_k \in [0,1], \forall k \in [n+m-1]$. Therefore, we know
	\begin{align*}
	\bm{\nabla}_1&= \lambda \bm{g}^{P_1} + (1-\lambda)\bm{g}^{P_2} \\
	&= 
	\sum_{i=1}^n \big(\lambda\cdot\mu_i+(1-\lambda)\cdot(\gamma_i - \alpha_i)\big)\cdot \bm{x}_i + \sum_{j=1}^{m-1} \big(\lambda\cdot\mu_{n+j}+(1-\lambda)\cdot(\gamma_{n+j}+\beta_j)\big)\cdot(-\bm{y}_j) \\
	&= \sum_{i=1}^m \mathbf{1}_{\alpha_i > 0} \cdot \bm{x}_i - \sum_{j=1}^{m-1}\mathbf{1}_{\beta_j > 0}\cdot \bm{y}_j. \tag{by the definition of $\bm{\nabla}_1$}
	\end{align*}
	As $\{\bm{x}_i\}_{i=1}^n \cup \{\bm{y}_j\}_{j=1}^{m-1}$ are linearly independent, it holds 
	\[
	\lambda\cdot\mu_1+(1-\lambda)\cdot(\gamma_1 - \alpha_1)=\mathbf{1}_{\alpha_1 > 0} = 1.
	\]
	If $0\leq \lambda < 1$, we have
	\[
	1=\lambda\cdot\mu_1+(1-\lambda)\cdot(\gamma_1 - \alpha_1) \leq 1-(1-\lambda)\cdot \alpha_1 < 1,
	\]
	which gives the contradiction.
	\paragraph{Case 2.} $\forall i \in [n]: \alpha_i < 0$ but $\exists j \in [m-1]: \beta_j > 0$. Suppose $\beta_1 > 0$. 
	Then, we write
	\[
	\bm{y}_1 = \sum_{i=1}^n (-\alpha_i/\beta_1)\cdot \bm{x}_1 + \sum_{j=2}^{m-1} (-\beta_j/\beta_1)\cdot \bm{y}_j - (1/\beta_1)\cdot \bm{y}_m.
	\]
	Note that $\{\bm{x}_i\}_{i=1}^n \cup \{\bm{y}_j\}_{j=1}^{m}$ are linearly independent.
	By abuse of notation and swapping $\bm{y}_m$ and $\bm{y}_1$, we still write $	\bm{y}_m = \sum_{i=1}^n \alpha_i \bm{x}_i + \sum_{j=1}^{m-1} \beta_j \bm{y}_j
$.  Then, we have $\forall i \in [n]: \alpha_i > 0$ and the situation reduces to the \textbf{Case 1}.
	
	\paragraph{Case 3.} $\forall i \in [n], j \in [m-1]: \alpha_i < 0, \beta_j < 0$. In that case, we have $\bm{\nabla}_1 = \bm{0}$. By a similar manipulation as these in \textbf{Case 1}, we have
	\[
	\begin{aligned}
	\bm{\nabla}_1&=  
	\sum_{i=1}^n \big(\lambda\cdot\mu_i+(1-\lambda)\cdot(\gamma_i - \alpha_i)\big)\cdot \bm{x}_i + \sum_{j=1}^{m-1} \big(\lambda\cdot\mu_{n+j}+(1-\lambda)\cdot(\gamma_{n+j}+\beta_j)\big)\cdot(-\bm{y}_j) \\
	&= \bm{0}.
	\end{aligned}
	\]
	As $\{\bm{x}_i\}_{i=1}^n \cup \{\bm{y}_j\}_{j=1}^{m-1}$ are linearly independent, it holds 
	\[
	\lambda\cdot\mu_1+(1-\lambda)\cdot(\gamma_1 - \alpha_1)=\mathbf{1}_{\alpha_1 > 0} = 0.
	\]
	If $0\leq \lambda < 1$, we have
	\[
	0=\lambda\cdot\mu_1+(1-\lambda)\cdot(\gamma_1 - \alpha_1) \geq -(1-\lambda)\cdot \alpha_1 > 0,
	\]
	which gives the contradiction.
	
	Therefore, we have shown  $\lambda=1$ which implies $
	\bm{\nabla}_1  \in \conv\left( P_1 \backslash \{\bm{\nabla}_1\} \right).
	$ However, as $\{\bm{x}_i\}_{i=1}^n \cup \{\bm{y}_j\}_{j=1}^{m-1}$ are linearly independent, $\bm{\nabla}_1$ is an extreme point of $\conv (P_1)$ by \Cref{lem:extremept}. Thus, we know
	$
	\bm{\nabla}_1  \notin \conv\left( P_1 \backslash \{\bm{\nabla}_1\} \right)
	$ by definition,
	a contradiction.
\end{proof}

\begin{Lemma}[Sufficient]\label{lem:s-suff}
	If the following condition holds
	\[
	\spn \big(\lB \bm{x}_i \rB_{i=1}^n \big) \cap 
	\spn \big(\lB \bm{y}_j \rB_{j=1}^m \big) = \{\bm{0}\},
	\]
	then $
	\partial_C f_{\textsf{PL}} (\bm{w}) = G_{\textsf{PL}}^C.
	$
\end{Lemma}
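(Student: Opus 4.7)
I plan to prove the reverse inclusion $G_{\textsf{PL}}^C \subseteq \partial_C f_{\textsf{PL}}(\bm{w})$; the forward inclusion follows immediately from Clarke's fuzzy sum rule applied to the summands of $f_{\textsf{PL}}$. Since both sets are nonempty, convex and compact, the reverse inclusion is equivalent to the support-function bound $f_{\textsf{PL}}^\circ(\bm{w};\bm{v}) \geq \sigma_{G_{\textsf{PL}}^C}(\bm{v})$ for every $\bm{v} \in \mathbb{R}^d$, where a direct computation yields $\sigma_{G_{\textsf{PL}}^C}(\bm{v}) = \sum_{i=1}^n \max\{\bm{x}_i^\top\bm{v},0\} + \sum_{j=1}^m \max\{-\bm{y}_j^\top\bm{v},0\}$.

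The heart of the argument is to exhibit, for each $\bm{v}$, a single direction $\bm{d}$ along which perturbing $\bm{w}$ simultaneously activates exactly the $\bm{x}_i$'s with $\bm{x}_i^\top\bm{v}>0$ and the $\bm{y}_j$'s with $\bm{y}_j^\top\bm{v}<0$. Write $V_x := \spn\{\bm{x}_i\}_{i=1}^n$ and $V_y := \spn\{\bm{y}_j\}_{j=1}^m$. The span condition $V_x \cap V_y = \{\bm{0}\}$ dualizes via the standard identity $(V_x \cap V_y)^\perp = V_x^\perp + V_y^\perp$ to $V_x^\perp + V_y^\perp = \mathbb{R}^d$. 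Hence every $\bm{v}$ admits a (non-unique) decomposition $\bm{v} = \bm{d}_1 + \bm{d}_2$ with $\bm{d}_1 \in V_x^\perp$ and $\bm{d}_2 \in V_y^\perp$; putting $\bm{d} := \bm{d}_2 - \bm{d}_1$ and using that $\bm{d}_1$ is orthogonal to every $\bm{x}_i$ while $\bm{d}_2$ is orthogonal to every $\bm{y}_j$ gives the sharp identities $\bm{x}_i^\top\bm{d} = \bm{x}_i^\top\bm{v}$ and $\bm{y}_j^\top\bm{d} = -\bm{y}_j^\top\bm{v}$ for all $i,j$.

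With this $\bm{d}$ in hand, I would take $\bm{w}_k := \bm{w} + \epsilon_k \bm{d}$ and $t_k$ with $0 < t_k < \epsilon_k$ and $\epsilon_k \searrow 0$. Since $\bm{x}_i^\top \bm{w} = \bm{y}_j^\top \bm{w} = 0$ by hypothesis, the two identities reduce the relevant max-operators to explicit signed expressions; a short case split on the signs of $\bm{x}_i^\top\bm{v}$ and $\bm{y}_j^\top\bm{v}$ then yields the \emph{exact} equality $f_{\textsf{PL}}(\bm{w}_k + t_k \bm{v}) - f_{\textsf{PL}}(\bm{w}_k) = t_k \cdot \sigma_{G_{\textsf{PL}}^C}(\bm{v})$ for every sufficiently large $k$ (the sign-matching identity on the $\bm{x}$'s contributes $t_k \max\{\bm{x}_i^\top\bm{v},0\}$ per index, and the sign-flipping identity on the $\bm{y}$'s, combined with the leading minus sign in $f_{\textsf{PL}}$, contributes $t_k \max\{-\bm{y}_j^\top\bm{v},0\}$). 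Passing to the limsup in the definition of $f_{\textsf{PL}}^\circ(\bm{w};\bm{v})$ then delivers $f_{\textsf{PL}}^\circ(\bm{w};\bm{v}) \geq \sigma_{G_{\textsf{PL}}^C}(\bm{v})$ and closes the proof.

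The only nontrivial ingredient is the construction of $\bm{d}$. The naive choice $\bm{d}=\bm{v}$ matches signs on the $\bm{x}_i$'s but not on the $\bm{y}_j$'s, and the non-orthogonality of $V_x$ and $V_y$ rules out any direct orthogonal-projector construction inside $V_x \oplus V_y$. The dual identity $V_x^\perp + V_y^\perp = \mathbb{R}^d$ is precisely what \emph{decouples} the two linear systems on $\bm{X}^\top\bm{d}$ and $\bm{Y}^\top\bm{d}$; when the span condition fails this decoupling breaks, and the resulting obstruction is exactly what \Cref{lem:s-nece} exploits from the other side.
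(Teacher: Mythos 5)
Your proposal is correct, but it takes a genuinely different route from the paper's. The paper proves the nontrivial inclusion by a change of variables: it factors $\bm{X}=\bm{U}_x\bm{\Sigma}_x\bm{V}_x^\top$ and $\bm{Y}=\bm{U}_y\bm{\Sigma}_y\bm{V}_y^\top$, uses the span condition to make $\bm{U}=\left[\bm{U}_x\,|\,\bm{U}_y\right]$ full column rank, writes $f_{\textsf{PL}}=h_{\textsf{PL}}\circ\bm{U}^\top$ with $h_{\textsf{PL}}(\bm{z}_1,\bm{z}_2)$ separable in $(\bm{z}_1,\bm{z}_2)$, and then combines the separability product rule, the chain rule for surjective linear maps, and the convexity of each block to obtain $G_{\textsf{PL}}^C$. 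You instead work entirely on the primal side: you identify $f_{\textsf{PL}}^\circ(\bm{w};\cdot)$ with the support function of $\partial_C f_{\textsf{PL}}(\bm{w})$ and, for each $\bm{v}$, exhibit a witness sequence $\bm{w}_k=\bm{w}+\epsilon_k\bm{d}$, $0<t_k<\epsilon_k$, along which the difference quotient is exactly $\sigma_{G_{\textsf{PL}}^C}(\bm{v})=\sum_i\max\{\bm{x}_i^\top\bm{v},0\}+\sum_j\max\{-\bm{y}_j^\top\bm{v},0\}$. The decoupling that the paper achieves through separability after the coordinate change, you achieve through the dual identity $V_x^\perp+V_y^\perp=\mathbb{R}^d$; your resulting $\bm{d}$ with $\bm{x}_i^\top\bm{d}=\bm{x}_i^\top\bm{v}$ and $\bm{y}_j^\top\bm{d}=-\bm{y}_j^\top\bm{v}$ is precisely the direction that simultaneously realizes the worst case for both sums, and I verified your sign analysis (which uses $t_k<\epsilon_k$ and the standing hypothesis $\bm{x}_i^\top\bm{w}=\bm{y}_j^\top\bm{w}=0$); the identity in fact holds for every $k$, not just large $k$. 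Your argument is more elementary and self-contained -- it needs neither the SVD, nor the surjective chain rule, nor the convex sum formula from Hiriart-Urruty--Lemar\'echal -- and it constructively displays which nearby differentiability points generate each extreme subgradient. What the paper's route buys in exchange is reusability: its coordinate-change preparation is carried over verbatim to the limiting and Fr\'echet versions in \Cref{thm:pl-CR-limiting} and \Cref{thm:pl-CR-frechet}, whereas your support-function argument is specific to the Clarke subdifferential, since the limiting and Fr\'echet subdifferentials are not characterized by $f^\circ$.
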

\begin{proof}
We first do a general preparation that will be reused in other developments.
Let $\bm{X} =  \left[ \begin{array}{c|c|c}
		\bm{x}_1 &\cdots &\bm{x}_n \end{array} \right] \in \mathbb{R}^{d\times n}$ and $\bm{Y} = \left[ \begin{array}{c|c|c}
		\bm{y}_1 &\cdots &\bm{y}_m \end{array} \right] \in \mathbb{R}^{d\times m}$ be given. 
The thin-SVD of $\bm{X}$ can be written as $\bm{X} = \bm{U}_x \bm{\Sigma}_x \bm{V}_x^\top$ with $\bm{U}_x \in \St(d, r_x), \bm{\Sigma}_x \in \mathbb{R}^{r_x \times r_x}, \bm{V}_x \in \St(n, r_x)$, and $r_x = \rk(\bm{X})$. Similarly, for $\bm{Y}$, we have $\bm{Y} = \bm{U}_y \bm{\Sigma}_y \bm{V}_y^\top$ with $\bm{U}_y \in \St(d, r_y), \bm{\Sigma}_y \in \mathbb{R}^{r_y \times r_y}, \bm{V}_y \in \St(m, r_y)$, and $r_y = \rk(\bm{Y})$. As $\spn \big(\lB \bm{x}_i \rB_{i=1}^n \big) \cap 
	\spn \big(\lB \bm{y}_j \rB_{j=1}^m \big) = \{\bm{0}\}$, we know $\bm{U}_x^\top \bm{U}_y = \bm{0}$. Therefore, we can write
	\[
	\mathbb{R}^{n+m}\ni\left[ \begin{array}{c}
     \bm{X}^\top \\ \bm{Y}^\top \end{array} \right] \cdot \bm{w} = 
     \left[ \begin{array}{cc}
     \bm{V}_x \bm{\Sigma}_x & \\
     & \bm{V}_y \bm{\Sigma}_y \end{array} \right] \cdot \left( \bm{z} \coloneqq
     \left[ \begin{array}{c}
     \bm{z}_1 \\ \bm{z}_2 \end{array} \right]\in\mathbb{R}^{r_x+r_y}\right),
	\]
	where $\bm{z} = \bm{U}^\top \bm{w}$ and $\bm{U}\coloneqq \left[ \begin{array}{c|c}
     \bm{U}_x & \bm{U}_y \end{array} \right] \in \St(d, r_x+r_y).$
     
	Let an auxiliary function $h_{\textsf{PL}}:\mathbb{R}^{r_x}\times \mathbb{R}^{r_y}\rightarrow \mathbb{R}$ be
	\[
			h_{\textsf{PL}}(\bm{z}_1, \bm{z}_2)\coloneqq 
	\underbrace{\sum_{i=1}^n \max\lB \bm{e}_i^\top\bm{V}_{x} \bm{\Sigma}_x \bm{z}_1,0 \rB}_{h_{\textsf{PL}1}(\bm{z}_1)} - \underbrace{\sum_{j=1}^m \max\lB \bm{e}_j^\top\bm{V}_{y} \bm{\Sigma}_y \bm{z}_2, 0 \rB}_{h_{\textsf{PL}2}(\bm{z}_2)}. 
	\]
	As $h_{\textsf{PL}}$ is separable with respect to $\bm{z}_1$ and $\bm{z}_2$, by \cite[Proposition 2.5]{rockafellar1985extensions} and \citep[Proposition 10.5]{rockafellar2009variational}, we know 
	\[
	\partial_\triangleleft h_{\textsf{PL}}(\bm{z_1},\bm{z_2}) = \partial_\triangleleft h_{\textsf{PL}1}(\bm{z}_1)\times  \partial_\triangleleft [-h_{\textsf{PL}2}](\bm{z}_2).
	\]
	Note that $f_{\textsf{PL}}(\bm{w}) = h_{\textsf{PL}}(\bm{U}_x^\top \bm{w},\bm{U}_y^\top \bm{w})$.
	We compute
	\begin{align*}
	\partial_\triangleleft f_{\textsf{PL}}(\bm{w}) &= \partial_\triangleleft \Big[ h_{\textsf{PL}}(\bm{U}_x^\top \cdot,\bm{U}_y^\top \cdot)\Big](\bm{w}) \\
	&\overset{(a)}{=} \bm{U}\partial_\triangleleft \big[ h_{\textsf{PL}}( \cdot, \cdot)\big]\left(\bm{U}_x^\top\bm{w}, \bm{U}_y^\top\bm{w}\right)  \\
	&\overset{(b)}{=} \bm{U}_x\partial_\triangleleft \big[h_{\textsf{PL}1}\big]\left(\bm{U}_x^\top\bm{w}\right)+ \bm{U}_y\partial_\triangleleft [-h_{\textsf{PL}2}]\left(\bm{U}_y^\top\bm{w}\right) \\
	&\overset{(c)}{=} \partial_\triangleleft \left[h_{\textsf{PL}1}\left(\bm{U}_x^\top \cdot \right) \right](\bm{w}) + \partial_\triangleleft  \left[-h_{\textsf{PL}2}\left(\bm{U}_y^\top \cdot \right) \right](\bm{w}), \tag{$\diamondsuit$}\label{eq:sufficient-prep-end} %
	\end{align*}
	where (a) is using \cite[Theorem 2.3.10]{clarke1990optimization}, 
	\cite[Exercise 10.7]{rockafellar2009variational}, and $\bm{U}$ is full column rank; (b) is from $\partial_\triangleleft h_{\textsf{PL}}(\bm{z_1},\bm{z_2}) = \partial_\triangleleft h_{\textsf{PL}1}(\bm{z}_1)\times  \partial_\triangleleft [-h_{\textsf{PL}2}](\bm{z}_2)$; (c) is using the reasoning in (a) for $h_1,h_2$ separately.
	
	In particular for Clarke subdifferential, we know $\partial_C [-h_{\textsf{PL}2}] = -\partial_C [h_{\textsf{PL}2}]$ using \cite[Proposition 2.3.1]{clarke1990optimization}. As $h_{\textsf{PL}2}$ is convex, $\partial_C [h_{\textsf{PL}2}]$ is equal to the convex subdifferential of $h_{\textsf{PL}2}$ by \cite[Proposition 2.2.7]{clarke1990optimization}. Then, by \cite[\S D, Corollary 4.3.2]{hiriart2004fundamentals}, a direct computation gives
	\[
	\partial_C f_{\textsf{PL}}(\bm{w}) = \partial_C \left[h_{\textsf{PL}1}\left(\bm{U}_x^\top \cdot \right) \right](\bm{w}) +\left(- \partial_C  \left[h_{\textsf{PL}2}\left(\bm{U}_y^\top \cdot \right) \right](\bm{w})\right) = G_{\textsf{PL}}^C,
	\]
	as required.
\end{proof}

\begin{proof}[Proof of \Cref{thm:reluMain-CR}]
	According to the argument in \Cref{sec:roadmap}, we only need to consider the Clarke subdifferential $\partial_C f_k(\bm{w}_k)$ for every $k \in [H]$. It is showed in \Cref{thm:pl-CR} that we have
\[
\partial_C f_k(\bm{w}_k) = \widetilde{G}_k^C,
\]
if and only if the following span qualification is satisfied:
\[
\spn \left(\lB \bm{x}_i \rB_{i\in\mathcal{I}_k^+} \right) \cap 
	\spn \left(\lB \bm{x}_j \rB_{j\in\mathcal{I}_k^-} \right) = \{\bm{0}\}.
\]
Then, put all $k \in [H]$ cases together, and \Cref{thm:reluMain-CR} is proved.
\end{proof}

\subsubsection{Chain Rule for Limiting Subdifferential}
\begin{Theorem}[Limiting]\label{thm:pl-CR-limiting}
	Suppose $\bm{x}_i^\top \bm{w} = \bm{y}_j^\top \bm{w} = 0$ and $\bm{y}_j\neq \bm{0}$ for any $i\in [n], j \in [m]$. We have the exact limiting subdifferential chain rule
	\[
	\partial f_{\textsf{PL}} (\bm{w}) = G_{\textsf{PL}}^L \coloneqq \sum_{i=1}^n \bm{x}_i\cdot [0,1] + \left\{ -\sum_{j=1}^m \bm{y}_j \cdot \mathbf{1}_{\bm{d}^\top \bm{y}_j>0}: \bm{d}\in\mathbb{R}^d, \min_{1 \leq t \leq m} \left| \bm{d}^\top \bm{y}_t \right| > 0  \right\}
	\]
	if and only if
	$
	\spn \big(\lB \bm{x}_i \rB_{i=1}^n \big) \cap 
	\spn \big(\lB \bm{y}_j \rB_{j=1}^m \big) = \{\bm{0}\}.
	$
\end{Theorem}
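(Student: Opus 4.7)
The plan is to mirror the structure of the proof of Theorem \ref{thm:pl-CR}, splitting into a sufficient and a necessary direction, and leveraging the preparatory computations in the proofs of Lemmas \ref{lem:s-suff} and \ref{lem:s-nece}.

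For the sufficient direction, I reuse the thin-SVD setup from the proof of Lemma \ref{lem:s-suff}. Under SQ one has $\bm{U}_x^\top\bm{U}_y = \bm{0}$, so the chain of identities culminating in equation \eqref{eq:sufficient-prep-end} is valid for the limiting subdifferential via the separable sum rule \cite[Proposition 10.5]{rockafellar2009variational} and the chain rule through a linear map of full column rank \cite[Exercise 10.7]{rockafellar2009variational}, yielding
\[
\partial f_{\textsf{PL}}(\bm{w}) = \partial\bigl[h_{\textsf{PL}1}(\bm{U}_x^\top\,\cdot)\bigr](\bm{w}) + \partial\bigl[-h_{\textsf{PL}2}(\bm{U}_y^\top\,\cdot)\bigr](\bm{w}).
\]
The first summand evaluates to $\sum_{i=1}^n \bm{x}_i\cdot[0,1]$ since $h_{\textsf{PL}1}$ is convex piecewise linear (so its limiting subdifferential equals its convex subdifferential) and $\bm{U}_x$ pulls this back to the ambient space. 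The second summand is the delicate one: $-h_{\textsf{PL}2}$ is concave piecewise linear, so Lemma \ref{lem:frechet-empty} gives $\widehat\partial(-h_{\textsf{PL}2}) = \emptyset$ at every kink, and hence its limiting subdifferential equals its Bouligand subdifferential, i.e., the limits of gradients on neighboring smooth pieces. These pieces are indexed by the sign patterns of $\{\bm{e}_j^\top \bm{V}_y\bm{\Sigma}_y \bm{z}_2'\}_j$, and under the substitution $\bm{z}_2' = \bm{U}_y^\top\bm{d}$ (which ranges over a full neighborhood of $\bm{0}$ in $\mathbb{R}^{r_y}$ as $\bm{d}$ ranges over $\mathbb{R}^d$) these become the sign patterns of $\{\bm{y}_j^\top\bm{d}\}_j$, giving precisely the parametric set inside $G^L_{\textsf{PL}}$. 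Nonemptiness of the parametric set is immediate from $\bm{y}_j \neq \bm{0}$ for all $j$.

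For the necessary direction, I argue by contradiction using the Clarke case already proved in Lemma \ref{lem:s-nece}. By Fact \ref{fact:relation-fermat} together with $\partial_C f_{\textsf{PL}}(\bm{w}) = \conv\bigl(\partial_B f_{\textsf{PL}}(\bm{w})\bigr)$ from \cite[Theorem 9.61]{rockafellar2009variational}, one has $\partial_C f_{\textsf{PL}}(\bm{w}) = \conv\bigl(\partial f_{\textsf{PL}}(\bm{w})\bigr)$. A short computation yields $\conv(G^L_{\textsf{PL}}) = G^C_{\textsf{PL}}$: convex hull commutes with Minkowski sum, and the convex hull of the negated set of gradients of $h_{\textsf{PL}2}$ at smooth points equals $-\partial_C h_{\textsf{PL}2}(\bm{w}) = \sum_j(-\bm{y}_j)\cdot[0,1]$ by convexity and piecewise linearity of $h_{\textsf{PL}2}$. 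If SQ failed yet $\partial f_{\textsf{PL}}(\bm{w}) = G^L_{\textsf{PL}}$ still held, taking convex hulls on both sides would give $\partial_C f_{\textsf{PL}}(\bm{w}) = G^C_{\textsf{PL}}$, contradicting Lemma \ref{lem:s-nece}. The main obstacle in the whole argument is the second step in the sufficient direction: carrying out the Bouligand-subdifferential computation for $-h_{\textsf{PL}2}\circ\bm{U}_y^\top$ and matching the reduced-coordinate parameterization via $\bm{z}_2'$ to the ambient parameterization via $\bm{d}$; the remaining steps are essentially bookkeeping on top of the Clarke-case argument.
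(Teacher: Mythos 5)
Your proposal is correct and follows essentially the same route as the paper: the sufficient direction reuses the thin-SVD decomposition up to \eqref{eq:sufficient-prep-end} and then computes the limiting subdifferential of the concave piece by observing (via \Cref{lem:frechet-empty}) that only gradients at nearby smooth points survive in the outer limit, yielding the sign-pattern parametrization; the necessary direction reduces to \Cref{lem:s-nece} by taking convex hulls and using $\partial_C = \conv(\partial)$. The paper phrases the key computation directly through the definition of the limiting subdifferential as an outer limit of Fr\'echet subdifferentials rather than through the Bouligand subdifferential, but the content is identical.
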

\begin{proof}
	(Sufficient) We begin with the general argument in the proof of \Cref{lem:s-suff} until \Cref{eq:sufficient-prep-end}. After that, we will focus on the proof of 
	\[
	\partial  \left[-h_{\textsf{PL}2}\left(\bm{U}_y^\top \cdot \right) \right](\bm{w}) = \left\{ -\sum_{j=1}^m \bm{y}_j \cdot \mathbf{1}_{\bm{d}^\top \bm{y}_j>0}: \bm{d}\in\mathbb{R}^d, \min_{1 \leq t \leq m} \left| \bm{d}^\top \bm{y}_t \right| > 0  \right\}\eqqcolon G_{\textsf{PL}2}^L.
	\]
	For the ease of notation, we denote $q(\bm{w}) \coloneqq -h_{\textsf{PL}2}\left(\bm{U}_y^\top \bm{w} \right) = -\sum_{j=1}^m \max\{\bm{y}_j^\top\bm{w},0\}$. Note that by the definition of limiting subdifferential (see \Cref{def:subd-l}), we have 
	\[
	\partial q(\bm{w}) = \limsup_{\bm{w}' \rightarrow \bm{w}} \widehat{\partial} q(\bm{w}') = \left\{ \bm{g}: \exists \{\bm{w}_\nu\} \rightarrow \bm{w} \textnormal{ and } \{\bm{g}_\nu\} \rightarrow \bm{g} \textnormal{ s.t. } \bm{g}_\nu \in \widehat{\partial} q(\bm{w}_\nu), 
	\forall \nu \right\}.
	\]
	Let $\bm{g} \in \partial q(\bm{w})$. Then, there exist $\{\bm{w}_\nu\}_\nu$ and $\{\bm{g}_\nu\}_\nu$ such that $\bm{w}_\nu \rightarrow \bm{w}, \bm{g}_\nu \in \widehat{\partial} q(\bm{w}_\nu),$ and  $\bm{g}_\nu \rightarrow \bm{g}$. We can assume for any $\nu$ and any $j\in[m]$, we have $\bm{w}_\nu^\top \bm{y}_j \neq 0$, as otherwise, by \Cref{lem:frechet-empty}, $\widehat{\partial} q(\bm{w}_k) = \emptyset$ and $\bm{g}_k$ is undefined. Then, for any $\nu$, the function $q$ is strictly differentiable at $\bm{w}_\nu$, which implies
	\[
	\{\bm{g}_\nu\} = \widehat{\partial} q(\bm{w}_\nu) = \left\{-\sum_{j=1}^m \bm{y}_j \cdot \mathbf{1}_{(\bm{w}_\nu - \bm{w})^\top \bm{y}_j>0} \right\} \subseteq G_{\textsf{PL}2}^L.
	\]
	As $G_{\textsf{PL}2}^L$ is a finite set, it is trivially closed with the usual Euclidean metric. We have $\partial q(\bm{w}) \subseteq G_{\textsf{PL}2}^L$. For the reverse direction, let $\bm{g}' \in G_{\textsf{PL}2}^L$. Then, there exists $\bm{d}$ such that
	\[
	\bm{g}' = -\sum_{j=1}^m \bm{y}_j \cdot \mathbf{1}_{\bm{d}^\top \bm{y}_j>0}
	\]
	with $\bm{d}^\top \bm{y}_j \neq \bm{0}$ for any $j \in [m]$.
	Let $\bm{w}_\nu = \bm{w} + \bm{d}/\nu$. We get $\bm{w}_\nu^\top \bm{y}_j = \nu^{-1}\bm{d}^\top \bm{y}_j \neq 0$ for any $j \in [m]$. Then, we know the function $q$ is strictly differentiable at $\bm{w}_\nu$ and $\{\bm{g}_\nu \} = \widehat\partial q(\bm{w}_\nu)$. Thus, for any $\nu$, we get $\bm{g}_\nu = \bm{g}'$. Consequently, we get $\bm{g}' \in \widehat\partial q(\bm{w})$ and $G_{\textsf{PL}2}^L \subseteq \partial q(\bm{w})$.
	
	(Necessary) Suppose $\bm{0}\neq \bm{v} \in \spn \big(\lB \bm{x}_i \rB_{i=1}^n \big) \cap 
	\spn \big(\lB \bm{y}_j \rB_{j=1}^m \big)$ and 
	\[
	\partial f_{\textsf{PL}}(\bm{w}) = \partial \left[h_{\textsf{PL}1}\left(\bm{U}_x^\top \cdot \right) \right](\bm{w}) + \partial  \left[-h_{\textsf{PL}2}\left(\bm{U}_y^\top \cdot \right) \right](\bm{w}) = G_{\textsf{PL}}^L.
	\] 
	Then, by taking a convex hull on both size and using \cite[Theorem 8.49]{rockafellar2009variational}, we get
	\[
	\partial_C f_{\textsf{PL}}(\bm{w}) = \partial_C \left[h_{\textsf{PL}1}\left(\bm{U}_x^\top \cdot \right) \right](\bm{w}) + \partial_C  \left[-h_{\textsf{PL}2}\left(\bm{U}_y^\top \cdot \right) \right](\bm{w}) = \conv (G_{\textsf{PL}}^L) = G_{\textsf{PL}}^C,
	\]
	which is a contradiction to \Cref{lem:s-nece}.
\end{proof}

\begin{proof}[Proof of \Cref{thm:reluMain-CR-limiting}]
	According to the argument in \Cref{sec:roadmap}, we only need to consider the limiting subdifferential $\partial f_k(\bm{w}_k)$ for every $k \in [H]$. It is showed in \Cref{thm:pl-CR-limiting} that we have
\[
\partial f_k(\bm{w}_k) = \widetilde{G}_k^L,
\]
if and only if the following span qualification is satisfied:
\[
\spn \left(\lB \bm{x}_i \rB_{i\in\mathcal{I}_k^+} \right) \cap 
	\spn \left(\lB \bm{x}_j \rB_{j\in\mathcal{I}_k^-} \right) = \{\bm{0}\}.
\]
Then, put all $k \in [H]$ cases together, and \Cref{thm:reluMain-CR-limiting} is proved.
\end{proof}

\subsubsection{Chain Rule for Fr\'echet Subdifferential}
\begin{Theorem}[Fr\'echet]\label{thm:pl-CR-frechet}
	Suppose $\bm{x}_i^\top \bm{w} = \bm{y}_j^\top \bm{w} = 0$ and $\bm{y}_j \neq \bm{0}$ for any $i\in [n], j \in [m]$. For any given $\bm{w}$ such that $\widehat{\partial} f_{\textsf{PL}} (\bm{w}) \neq \emptyset$, we have the following exact chain rule
		\[
	\widehat{\partial} f_{\textsf{PL}} (\bm{w}) = G_{\textsf{PL}}^F \coloneqq \sum_{i=1}^n \bm{x}_i\cdot [0,1] + \left\{ \begin{array}{rcl}
          \emptyset & \mbox{if}
         & m > 0, \\
        \{\bm{0}\} & \mbox{if} & m=0.
                \end{array}\right.
	\]
	if and only if
	$
	\spn \big(\lB \bm{x}_i \rB_{i=1}^n \big) \cap 
	\spn \big(\lB \bm{y}_j \rB_{j=1}^m \big) = \{\bm{0}\}.
	$
\end{Theorem}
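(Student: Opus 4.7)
The plan is to reuse the orthogonal decomposition \eqref{eq:sufficient-prep-end} established in the proof of \Cref{lem:s-suff} (which is stated for all three subdifferentials simultaneously via the symbol $\partial_\triangleleft$) and then to apply \Cref{lem:frechet-empty} to the subtracted block. The standing hypothesis $\widehat{\partial}f_{\textsf{PL}}(\bm{w})\neq\emptyset$ makes the two directions of the iff asymmetric: sufficiency will force $m=0$ by showing that the ``$-h_{\textsf{PL}2}$'' part of the Fréchet subdifferential vanishes, whereas necessity reduces to the bookkeeping fact $G^F_{\textsf{PL}}=\emptyset$ whenever $m>0$.

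For sufficiency, assume SQ. Specializing \eqref{eq:sufficient-prep-end} to $\partial_\triangleleft=\widehat{\partial}$ gives
\[
\widehat{\partial}f_{\textsf{PL}}(\bm{w})=\widehat{\partial}\big[h_{\textsf{PL}1}(\bm{U}_x^\top\cdot)\big](\bm{w})+\widehat{\partial}\big[-h_{\textsf{PL}2}(\bm{U}_y^\top\cdot)\big](\bm{w}).
\]
If $m\geq 1$, the second summand equals $\widehat{\partial}\big[-\sum_{j=1}^m\max\{\bm{y}_j^\top\cdot,0\}\big](\bm{w})$, which vanishes by \Cref{lem:frechet-empty} because all $\bm{y}_j^\top\bm{w}=0$ while $\bm{y}_j\neq\bm{0}$. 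Then $\widehat{\partial}f_{\textsf{PL}}(\bm{w})=\emptyset$ (Minkowski sum with the empty set), contradicting the standing hypothesis; hence $m=0$. In that case $f_{\textsf{PL}}$ is the convex function $\bm{w}\mapsto\sum_{i=1}^n\max\{\bm{x}_i^\top\bm{w},0\}$, and its Fréchet subdifferential at $\bm{w}$ coincides with its convex subdifferential $\sum_{i=1}^n\bm{x}_i\cdot[0,1]$, matching $G^F_{\textsf{PL}}$ exactly.

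For necessity, suppose SQ fails. Then there exists a nonzero vector in $\spn(\{\bm{x}_i\}_{i=1}^n)\cap\spn(\{\bm{y}_j\}_{j=1}^m)$, which forces $m\geq 1$, and hence by definition $G^F_{\textsf{PL}}=\emptyset$. Combined with the standing hypothesis $\widehat{\partial}f_{\textsf{PL}}(\bm{w})\neq\emptyset$, this immediately falsifies the claimed equality $\widehat{\partial}f_{\textsf{PL}}(\bm{w})=G^F_{\textsf{PL}}$.

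The main technical point is to verify that the chain of equalities in \eqref{eq:sufficient-prep-end} indeed remains valid for $\widehat{\partial}$. This relies on the Fréchet product rule for a variable-separated objective (immediate from \cite[Exercise 8.4]{rockafellar2009variational} together with additivity of directional derivatives) and the equality-type Fréchet chain rule through the full-row-rank map $\bm{U}^\top$ \cite[Exercise 10.7]{rockafellar2009variational}, both of which are enabled precisely by the orthonormality $\bm{U}_x^\top\bm{U}_y=\bm{0}$ that SQ delivers.
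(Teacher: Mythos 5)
Your proposal is correct and follows essentially the same route as the paper: both reuse the orthogonal decomposition \eqref{eq:sufficient-prep-end} from the proof of \Cref{lem:s-suff}, invoke \Cref{lem:frechet-empty} to kill the $-h_{\textsf{PL}2}$ block when $m\geq 1$, and dispose of necessity by noting that SQ failing forces $m>0$ and hence $G^F_{\textsf{PL}}=\emptyset$ while $\widehat{\partial}f_{\textsf{PL}}(\bm{w})\neq\emptyset$ by hypothesis. The only cosmetic difference is that you derive $m=0$ by contradiction with the standing nonemptiness hypothesis and then finish via convexity, whereas the paper simply observes that both sides are empty when $m>0$ and trivial when $m=0$; the substance is identical.
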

\begin{proof}
(Sufficient) We begin with the general argument in the proof of \Cref{lem:s-suff} until \Cref{eq:sufficient-prep-end}. We will focus on the proof of 
	\[
	\widehat\partial  \left[-h_{\textsf{PL}2}\left(\bm{U}_y^\top \cdot \right) \right](\bm{w}) = G_{\textsf{PL}2}^F \coloneqq \left\{ \begin{array}{rcl}
          \emptyset & \mbox{if}
         & m > 0, \\
        \{\bm{0}\} & \mbox{if} & m = 0.
                \end{array}\right.
	\]
	For the ease of notation, we denote $q(\bm{w}) \coloneqq -h_{\textsf{PL}2}\left(\bm{U}_y^\top \bm{w} \right) = -\sum_{j=1}^m \max\{\bm{y}_j^\top\bm{w},0\}$. 
	Then, by \Cref{lem:frechet-empty}, we know that if there exists $j \in [m]$ such that $\bm{w}^\top \bm{y}_j = 0$ and $\bm{y}_j \neq \bm{0}$, then we have $\widehat{\partial} q(\bm{w}) = \emptyset$. If $m=0$, then $q(\bm{w}) = 0$ and $G_{\textsf{PL}2}^F=\{\bm{0}\}$. The claim follows trivially.
	
	(Necessary)  Suppose $\bm{0}\neq \bm{v} \in \spn \big(\lB \bm{x}_i \rB_{i=1}^n \big) \cap 
	\spn \big(\lB \bm{y}_j \rB_{j=1}^m \big)$. There exists $\bm{y}_j \neq \bm{0}$ as otherwise $\bm{v} \notin  \{\bm{0}\} \supseteq \spn \big(\lB \bm{y}_j \rB_{j=1}^m \big)$. Then, we get $m>0$ and $G_{\textsf{PL}}^F = \emptyset$. Thus, from the assumption that $\widehat{\partial} f_{\textsf{PL}} (\bm{w}) \neq \emptyset$, we know $\widehat{\partial} f_{\textsf{PL}} (\bm{w}) \supsetneq G_{\textsf{PL}}^F = \emptyset$ by definition.
\end{proof}

\begin{proof}[Proof of \Cref{thm:reluMain-CR-frechet}]
	According to the argument in \Cref{sec:roadmap}, we only need to consider the Fr\'echet subdifferential $\widehat\partial f_k(\bm{w}_k)$ for every $k \in [H]$. It is showed in \Cref{thm:pl-CR-frechet} that we have
\[
\widehat\partial f_k(\bm{w}_k) = G_k^F,
\]
if and only if the following span qualification is satisfied:
\[
\spn \left(\lB \bm{x}_i \rB_{i\in\mathcal{I}_k^+} \right) \cap 
	\spn \left(\lB \bm{x}_j \rB_{j\in\mathcal{I}_k^-} \right) = \{\bm{0}\}.
\]
Then, put all $k \in [H]$ cases together, and \Cref{thm:reluMain-CR-frechet} is proved.
\end{proof}

\subsection{Proofs for \Cref{sec:discussion}}\label{sec:apd-prf-discussion}

\begin{Definition}[Regularities]\label{def:detailed-regularity} We consider the following regularity conditions:
\begin{itemize}
	\item General position data \cite[Assumption 2]{yun2018efficiently}: No $d$ data points $\{\tilde{\bm{x}}_i\}_i \subseteq \mathbb{R}^{d-1}$ lie on the same affine hyperplane, which is equivalent to the nonexistence of $\bm{w} \in \mathbb{R}^d$ and index set $\mathcal{J}\subseteq [N]$ with $|\mathcal{J}| \geq d$ such that $\bm{w}^\top \bm{x}_j = 0$ for any $j\in\mathcal{J}$.
	\item Linear Independence Kink Qualification (LIKQ) \cite[Definition 2]{griewank2016first}, \cite[Definition 2.6]{griewank2019relaxing}: Let the $j$-th row of the matrix $\nabla \bm{z}^\sigma$ in \Cref{sec:abs-norm-form} be $\bm{v}_j^\top$. 
	We define the following index set
	\[
	\alpha\coloneqq \left\{N(k-1)+i: \bm{w}_k^\top \bm{x}_i = 0 \right\}
	=
	\{j : z_j = 0\}.
	\]
	LIKQ is satisfied if the vectors $\{\bm{v}_i\}_{i \in \alpha}$ are linearly independent.
	\item Linearly Independent Activated Data (LIAD): Let the index set $\mathcal{J}_k \coloneqq \{j:\bm{w}_k^\top\bm{x}_j=0\}$. For any fixed $k \in [H]$, the data points $\{\bm{x}_i\}_{i \in \mathcal{J}_k}$ are linearly independent.  
\end{itemize}
\end{Definition}

\begin{proof}[Proof of {\Cref{prop:regularity-relation}}]
	For the relation general position $\Longrightarrow$ LIAD, it directly follows from \cite[Lemma 1]{yun2018efficiently}. By the analysis in \Cref{sec:abs-relu}, we know LIKQ is satisfied for the empirical loss of  two-layer ReLU network if and only if 
	\[
\left\{\prod_{k'=1}^H 0 \times \bm{1}_{k'=k}\cdot \bm{x}_i \right\}_{(N(k-1)+i) \in \alpha}
\]
are linearly independent. 
 It is easy to see that LIKQ holds if and only if, for any given $k\in[H]$, the data points $\{\bm{x}_i\}_{i \in \{j:\bm{w}_k^\top\bm{x}_j=0\}}$ are linearly independent. Thus, we have the relation LIAD $\iff$ LIKQ. Note that $\mathcal{I}_k^+\cup \mathcal{I}_k^- = \{j:\bm{w}_k^\top\bm{x}_j=0\}$. If $\{\bm{x}_j\}_{j \in \mathcal{I}_k^+\cup \mathcal{I}_k^-}$ are linearly independent, then it is evident that
\[
\spn \left(\lB \bm{x}_i \rB_{i\in\mathcal{I}_k^+} \right) \cap 
	\spn \left(\lB \bm{x}_j \rB_{j\in\mathcal{I}_k^-} \right) = \{\bm{0}\},
\]
which implies LIAD $\Longrightarrow$ SQ.
\end{proof}

\begin{proof}[Proof of {\Cref{prop:lmin-regular}}]
	Under SQ, if the Fr\'echet subdifferential is nonempty, we get $\mathcal{I}_k^-=\emptyset$ for any $k \in [H]$. By \Cref{thm:reluMain-CR} and \Cref{thm:reluMain-CR-frechet}, we have $\partial_C L$ and $\widehat{\partial} L$ are equal at that point. Then, Clarke regularity follows from \Cref{def:clarke-regular}. By \Cref{prop:regularity-relation}, if the data points are in general position, then they satisfy SQ. Using \cite[Theorem 10.1]{rockafellar2009variational}, the Fr\'echet subdifferential is nonempty at every local minimizer, which completes the proof.
\end{proof}

\section{Proofs for \Cref{sec:test}}\label{sec:appd-test}

\subsection{Testing Clarke NAS}\label{sec:prf-thm-robust}

\begin{proof}[Proof of \Cref{thm:robust}]

We consider an $\epsilon$-Clarke stationary point $(u_1^*, \bm{w}_1^*, \dots, u_H^*,\bm{w}_H^*)$ with
\[
\left\| (u_1^*, \bm{w}_1^*, \dots, u_H^*,\bm{w}_H^*) \right\| \leq B.
\]
By \Cref{thm:reluMain-CR}, we know there exists $\bm{g}^* \in \partial_C L(u_1^*, \bm{w}_1^*, \dots, u_H^*,\bm{w}_H^*)$ such that
\[
\|\bm{g}^*\eqqcolon(g_1^*, \bm{g}_1^*, \dots, g_H^*, \bm{g}_H^*)\| = \dist\Big( \bm{0}, \partial_C L(u_1^*, \bm{w}_1^*, \dots, u_H^*,\bm{w}_H^*) \Big) \leq \epsilon.
\]
Note that $\widehat{u}_i = u_i$ for any $i \in [H]$ in the returned vector $(\widehat{u}_1, \widehat{\bm{w}}_1, \dots, \widehat{u}_H,\widehat{\bm{w}}_H)$ of \Cref{alg:rounding}. In this subsection, we will write $u_i$ rather than $\widehat{u}_i$ for simplicity. 
Given a positive radius $\delta \in (0, C_\tau^\textnormal{Clarke}]$, we aim to show that, for any
\[
(u_1, \bm{w}_1, \dots, u_H,\bm{w}_H) \in \mathbb{B}_{\delta}\Big( (u_1^*, \bm{w}_1^*, \dots, u_H^*,\bm{w}_H^*) \Big),
\]
we can certify that the rounded point returned by \Cref{alg:rounding} satisfies
\[
\dist\Big( \bm{0}, \partial_C L(u_1, \widehat{\bm{w}}_1, \dots, u_H,\widehat{\bm{w}}_H) \Big) \leq \epsilon + C_{\mu}^\textnormal{Clarke}\cdot\delta,
\]
where $C_\mu^\textnormal{Clarke}<+\infty$ is a constant depending on the curvature that we will discuss later. 

We define the following shorthands for convenience
\begin{alignat*}{2}
	\rho_i^* &\coloneqq \ell_i'\left( \sum_{k=1}^H u_k^*\cdot \max\left\{ (\bm{w}_k^*)^\top \bm{x}_i, 0 \right\} \right),\qquad &&\forall i \in[N], \\
	\widehat{\rho}_i &\coloneqq \ell_i'\left( \sum_{k=1}^H u_k\cdot \max\left\{ \widehat{\bm{w}}_k^\top \bm{x}_i, 0 \right\} \right),\qquad &&\forall i \in[N].
\end{alignat*}
Recall the definition of the rounded $\{\widehat{\bm{w}}_k\}_k$ and we define indices sets $\mathcal{J}_k^<,\mathcal{J}_k^=,\mathcal{J}_k^>$ as
\begin{alignat*}{2}
  \widehat{\bm{w}}_k = \argmin_{ \bm{z} \in \mathbb{R}^d} &\ \| \bm{z} - \bm{w}_k \|^2 \\
  \textnormal{s.t.}\ \  &\ \bm{z}^\top \bm{x}_i \geq 2R\cdot\delta, && \forall i \in \mathcal{J}_k^>\coloneqq  \left\{j\in[N]: \bm{x}_j^\top \bm{w}_k > R\cdot \delta \right\}, \\
  &\ \bm{z}^\top \bm{x}_i \leq -2R\cdot\delta, \qquad&& \forall i \in \mathcal{J}_k^< \coloneqq \left\{j\in [N]: \bm{x}_j^\top \bm{w}_k < -R\cdot \delta\right\}, \\
  &\ \bm{z}^\top \bm{x}_i =0, && \forall i \in \mathcal{J}_k^=\coloneqq  \left\{j\in[N]: \left|\bm{x}_j^\top \bm{w}_k\right| \leq R\cdot \delta \right\}.
\end{alignat*}
We consider the following quantity related to the point $\bm{w}_k^*$ for any $k\in[H]$:
\[
\tau_k \coloneqq \min\left\{ \quad \min_{i :\bm{x}_i^\top\bm{w}_k^* > 0} \bm{x}_i^\top\bm{w}_k^*, \quad -\max_{i :\bm{x}_i^\top\bm{w}_k^* < 0} \bm{x}_i^\top\bm{w}_k^*\quad  \right\}.
\]
Note that $0 < \delta \leq C_\tau^{\textnormal{Clarke}} \leq  \frac{\tau_k}{4R}$.
For any $i\in[N]$ such that $\bm{x}_i^\top\bm{w}_k^*>0$, we have
\begin{align*}
\bm{x}_i^\top\bm{w}_k &= \bm{x}_i^\top\bm{w}_k^* - \bm{x}_i^\top\left(\bm{w}_k^* - \bm{w}_k\right) \\
&\geq \bm{x}_i^\top\bm{w}_k^* - \|\bm{x}_i\|\cdot \|\bm{w}_k^* - \bm{w}_k\| \\
&\geq \tau_k - R\cdot \delta  \geq 3R\cdot \delta > R\cdot \delta.
\end{align*}
Thus, we know $\left\{i:\bm{x}_i^\top\bm{w}_k^*>0\right\} \subseteq \mathcal{J}_k^>$. Similarly, for any $i\in[N]$ such that $\bm{x}_i^\top\bm{w}_k^*<0$, we have
\begin{align*}
\bm{x}_i^\top\bm{w}_k &= \bm{x}_i^\top\bm{w}_k^* + \bm{x}_i^\top\left(\bm{w}_k - \bm{w}_k^*\right) \\
&\leq \bm{x}_i^\top\bm{w}_k^* + \|\bm{x}_i\|\cdot \|\bm{w}_k^* - \bm{w}_k\| \\
&\leq -\tau_k + R\cdot \delta  \leq -3R\cdot \delta < -R\cdot \delta,
\end{align*}
which implies $\left\{i:\bm{x}_i^\top\bm{w}_k^*<0\right\} \subseteq \mathcal{J}_k^<$.
We have, for any $i\in[N]$ such that $\bm{x}_i^\top\bm{w}_k^*=0$, it holds
\begin{align*}
\left|\bm{x}_i^\top\bm{w}_k \right| &\leq \left| \bm{x}_i^\top\bm{w}_k^* \right| + \left|\bm{x}_i^\top\left(\bm{w}_k - \bm{w}_k^*\right) \right|\\
&\leq \left|\bm{x}_i^\top\bm{w}_k^*\right| + \|\bm{x}_i\|\cdot \|\bm{w}_k^* - \bm{w}_k\| \\
&\leq R\cdot \delta.
\end{align*}
So, we know $\left\{i:\bm{x}_i^\top\bm{w}_k^*=0\right\} \subseteq \mathcal{J}_k^=$. As 
$\mathcal{J}_k^<,\mathcal{J}_k^=,\mathcal{J}_k^>$ are disjoint and $[N]=\mathcal{J}_k^< \sqcup\mathcal{J}_k^= \sqcup \mathcal{J}_k^>$, we know 
\[
\left\{i:\bm{x}_i^\top\bm{w}_k^*<0\right\}=\mathcal{J}_k^<, \quad \left\{i:\bm{x}_i^\top\bm{w}_k^*=0\right\}=\mathcal{J}_k^=, \quad \left\{i:\bm{x}_i^\top\bm{w}_k^*>0\right\}=\mathcal{J}_k^>.
\]
Meanwhile, as $\widehat{\bm{w}}_k$ is feasible to the quadratic program in \Cref{alg:rounding}, we get
\[
\left\{i:\bm{x}_i^\top\widehat{\bm{w}}_k<0\right\}=\mathcal{J}_k^<, \quad \left\{i:\bm{x}_i^\top\widehat{\bm{w}}_k=0\right\}=\mathcal{J}_k^=, \quad \left\{i:\bm{x}_i^\top\widehat{\bm{w}}_k>0\right\}=\mathcal{J}_k^>,
\]
which implies $\mathcal{I}_k^+(\bm{w}_k^*)\cup\mathcal{I}_k^-(\bm{w}_k^*) = \mathcal{J}_k^= = \mathcal{I}_k^+(\widehat{\bm{w}}_k)\cup\mathcal{I}_k^-(\widehat{\bm{w}}_k)$ and $\mathbf{1}_{\bm{x}_i ^\top\bm{w}_k^*> 0} = \mathbf{1}_{i \in \mathcal{J}_k^> } = \mathbf{1}_{\bm{x}_i ^\top\widehat{\bm{w}}_k> 0}$ for any $k \in [H]$.
It is evident that 
\[
\|(\widehat{\bm{w}}_1,\dots,\widehat{\bm{w}}_H)  - (\bm{w}_1, \dots, \bm{w}_H) \| \leq \|(\bm{w}_1, \dots, \bm{w}_H) - (\bm{w}_1^*, \dots, \bm{w}_H^*) \| \leq \delta,
\]
as, for any $k \in [H]$, $\bm{w}_k^*$ is feasible to the quadratic program for computing $\widehat{\bm{w}}_k$ in \Cref{alg:rounding}. Therefore, we know 
\[
(u_1, \widehat{\bm{w}}_1, \dots, u_H,\widehat{\bm{w}}_H) \in \mathbb{B}_{\delta}\Big( (u_1, \bm{w}_1, \dots, u_H,\bm{w}_H) \Big).
\] 
By triangle inequality, it holds that
\[
(u_1, \widehat{\bm{w}}_1, \dots, u_H,\widehat{\bm{w}}_H) \in \mathbb{B}_{2\delta}\Big( (u_1^*, \bm{w}_1^*, \dots, u_H^*,\bm{w}_H^*) \Big).
\]
Using \Cref{thm:reluMain-CR}, we get
\[
\begin{aligned}
	&\dist\Big( \bm{0}, \partial_C L(u_1, \widehat{\bm{w}}_1, \dots, u_H,\widehat{\bm{w}}_H) \Big) \\
	&\leq \|\bm{g}^*\| + \dist\Big( \bm{g}^*, \partial_C L(u_1, \widehat{\bm{w}}_1, \dots, u_H,\widehat{\bm{w}}_H) \Big) \\
	&\leq \epsilon + \sum_{k=1}^H  \left| g_k^*- \sum_{i=1}^N \widehat{\rho}_i \cdot \max\left\{ \widehat{\bm{w}}_k^\top \bm{x}_i, 0 \right\}  \right| + \sum_{k=1}^H\dist\Big( \bm{g}^*_k, \partial_C \overline{L}_k(\widehat{\bm{w}}_k) \Big),
\end{aligned}
\]
where we define $\overline{L}_k(\widehat{\bm{w}}_k) = \sum_{i=1}^N u_k\widehat{\rho}_i\cdot \max\{\bm{x}_i^\top \widehat{\bm{w}}_k,0\}$.
We first compute
\begin{align*}
\sum_{i=1}^N \Big|\widehat{\rho}_i-\rho_i^*\Big|&= \sum_{i=1}^N \left|\ell_i'\left( \sum_{k=1}^H u_k\cdot \max\left\{ \widehat{\bm{w}}_k^\top \bm{x}_i, 0 \right\} \right) - \ell_i'\left( \sum_{k=1}^H u_k^*\cdot \max\left\{ (\bm{w}^*_k)^\top \bm{x}_i, 0 \right\} \right) \right| \\
&\leq L_{\ell'}\cdot \sum_{i=1}^N \left \| \sum_{k=1}^H  \left( u_k\cdot \max\left\{ \widehat{\bm{w}}_k^\top \bm{x}_i, 0 \right\} - u_k^*\cdot \max\left\{ (\bm{w}^*_k)^\top \bm{x}_i, 0 \right\}  \right)\right\| \\
&\leq L_{\ell'}\cdot \sum_{i=1}^N  \sum_{k=1}^H \big( |u_k|\cdot \|\bm{x}_i\| \cdot \| \widehat{\bm{w}}_k - \bm{w}^*_k \| + \|\bm{w}_k^*\|\cdot \|\bm{x}_i\|\cdot |u_k - u_k^*| \big) \\
&\leq 3L_{\ell'}NHBR\cdot \delta \eqqcolon C_1  \cdot  \delta.
\end{align*}
We now upper bound the second term $\sum_{k=1}^H  \left| g_k^*- \sum_{i=1}^N \widehat{\rho}_i \cdot \max\left\{ \widehat{\bm{w}}_k^\top \bm{x}_i, 0 \right\}  \right|$. Note that

\begin{align*}
	\left| g_k^* -  \sum_{i=1}^N \widehat{\rho}_i \cdot \max\left\{ \widehat{\bm{w}}_k^\top \bm{x}_i, 0 \right\}  \right| 
	&= \left| \sum_{i=1}^N \rho_i^* \cdot \max\left\{ (\bm{w}_k^*)^\top \bm{x}_i, 0 \right\}  -   \sum_{i=1}^N \widehat{\rho}_i \cdot \max\left\{ \widehat{\bm{w}}_k^\top \bm{x}_i, 0 \right\}  \right| \\
	&\leq  \underbrace{ \left|\sum_{i=1}^N \rho_i^* \cdot \max\left\{ (\bm{w}_k^*)^\top \bm{x}_i, 0 \right\}  -  \sum_{i=1}^N \widehat{\rho}_i \cdot \max\left\{ (\bm{w}_k^*)^\top \bm{x}_i, 0 \right\}  \right| }_{\eqqcolon T_1^k} \\
	&\qquad+ \underbrace{ \left|  \sum_{i=1}^N \widehat{\rho}_i \cdot \max\left\{ (\bm{w}_k^*)^\top \bm{x}_i, 0 \right\}  -   \sum_{i=1}^N \widehat{\rho}_i \cdot \max\left\{ \widehat{\bm{w}}_k^\top \bm{x}_i, 0 \right\}  \right| }_{\eqqcolon T_2^k}.
\end{align*}
Now we estimate these two terms.
For $T_1^k$, we compute
\begin{align*}
	T_1^k &\leq  \sum_{i=1}^N \|\bm{w}_k^*\|\cdot \|\bm{x}_i\| \cdot \left| \widehat{\rho}_i - \rho_i^* \right| 
	\leq BR \cdot \sum_{i=1}^N \left| \widehat{\rho}_i - \rho_i^* \right|  \leq BR \cdot C_1\cdot\delta \eqqcolon C_2 \cdot \delta.
\end{align*}
For $T_2^k$, we see that
\begin{align*}
	T_2^k &\leq  \sum_{i=1}^N \left| \widehat{\rho}_i\right| \cdot \|\bm{x}_i\|\cdot \|\bm{w}_k - \bm{w}_k^*\| \leq 2L_{\ell'}NR\cdot \delta \eqqcolon C_3 \cdot \delta.
\end{align*}
Summarizing, we have
\[
\sum_{k=1}^H  \left| g_k^*- \sum_{i=1}^N \widehat{\rho}_i \cdot \max\left\{ \widehat{\bm{w}}_k^\top \bm{x}_i, 0 \right\}  \right| \leq \sum_{k=1}^H T_1^k + T_2^k \leq H(C_2+C_3)\cdot \delta \eqqcolon C_4\cdot \delta.
\]
We proceed to upper bound $\sum_{k=1}^H\dist\Big( \bm{g}^*_k, \partial_C \overline{L}_k(\widehat{\bm{w}}_k) \Big)$.

By \Cref{thm:reluMain-CR}, we know that there exist $\xi_j \in [0,1], \forall j \in \mathcal{I}_k^+(\bm{w}_k^*)\cup\mathcal{I}_k^-(\bm{w}_k^*)$ such that the Clarke subgradient $\bm{g}_k^* \in \partial_C \overline{L}_k(\bm{w}_k^*)$ can be written as
\[
\bm{g}_k^*\coloneqq \sum_{i \in [N]\backslash \big(\mathcal{I}_k^+(\bm{w}_k^*)\cup\mathcal{I}_k^-(\bm{w}_k^*)\big)} u_k^*\rho_i^*\cdot \mathbf{1}_{\bm{x}_i ^\top\bm{w}_k^*> 0} \cdot\bm{x}_i + \sum_{j\in  \mathcal{I}_k^+(\bm{w}_k^*)\cup\mathcal{I}_k^-(\bm{w}_k^*)} u_k^*\rho_j^*\cdot \bm{x}_j\cdot \xi_j.
\]
Now, we are well prepared to upper bound $\dist\big(\bm{g}_k^*, \partial_C \overline L_k(\widehat{\bm{w}}_k)\big)$. Let
\[
\widehat{\bm{g}}_k \coloneqq \sum_{i \in [N]\backslash \big(\mathcal{I}_k^+(\widehat{\bm{w}}_k)\cup\mathcal{I}_k^-(\widehat{\bm{w}}_k)\big)} u_k\widehat{\rho}_i\cdot \mathbf{1}_{\bm{x}_i ^\top\widehat{\bm{w}}_k> 0} \cdot\bm{x}_i
+
\sum_{j\in \mathcal{I}_k^+(\widehat{\bm{w}}_k)\cup\mathcal{I}_k^-(\widehat{\bm{w}}_k)} u_k\widehat{\rho}_j\cdot \bm{x}_j\cdot \xi_j,
\]
which, by \Cref{thm:reluMain-CR}, belongs to the Clarke subdifferential $\partial_C \overline L_k(\widehat{\bm{w}}_k)$. We upper bound
\[
\dist\big(\bm{g}_k^*, \partial_C \overline L_k(\widehat{\bm{w}}_k)\big) \leq  \|\widehat{\bm{g}}_k - \bm{g}_k^*\|
\]
with
\begin{align*}
&\|\widehat{\bm{g}}_k - \bm{g}_k^*\| \\
&= \left\| 
\sum_{i \in [N]\backslash \big(\mathcal{I}_k^+(\widehat{\bm{w}}_k)\cup\mathcal{I}_k^-(\widehat{\bm{w}}_k)\big)}  \Big(u_k\widehat{\rho}_i-u_k^*\rho_i^*\Big)\cdot \mathbf{1}_{\bm{x}_i ^\top\widehat{\bm{w}}_k> 0} \cdot\bm{x}_i
+
\sum_{j\in \mathcal{I}_k^+(\widehat{\bm{w}}_k)\cup\mathcal{I}_k^-(\widehat{\bm{w}}_k)} \Big(u_k\widehat{\rho}_j-u_k^*\rho_j^*\Big)\cdot \bm{x}_j\cdot \xi_j
\right\| \\
&\leq 
\sum_{1\leq i \leq N}  \|\bm{x}_i\| \cdot \Big|u_k\widehat{\rho}_i-u_k^*\rho_i^*\Big| \\
&\leq R \cdot \sum_{1\leq i \leq N} \left(|u_k|\cdot\Big|\widehat{\rho}_i-\rho_i^*\Big| + |\rho_i^*|\cdot \Big|u_k-u_k^*\Big| \right) \\
&\leq  BR\cdot C_1\cdot \delta + NRL_{\ell'}\cdot \delta.
\end{align*}
Then, we have
\[
\sum_{k=1}^H \dist\big(\bm{g}_k^*, \partial_C \overline L_k(\widehat{\bm{w}}_k)\big) \leq  \sum_{k=1}^H  \|\widehat{\bm{g}}_k - \bm{g}_k^*\| \leq H(BR\cdot C_1 + NRL_{\ell'})\cdot \delta \eqqcolon  C_5 \cdot\delta.
\]
In sum, we have proved that
\[
\dist\Big( \bm{0}, \partial_C L(u_1, \widehat{\bm{w}}_1, \dots, u_H,\widehat{\bm{w}}_H) \Big) \leq \epsilon + C_\mu^{\textnormal{Clarke}}\cdot\delta,
\]
where $C_\mu^{\textnormal{Clarke}}\coloneqq C_4 + C_5=\textnormal{poly}(B,R,L_\ell, L_{\ell'}, N, H)$.
\end{proof}

\subsection{Testing Fr\'echet NAS}\label{sec:appd-test-f}

\begin{proof}[Proof of \Cref{thm:robust-f}]
Some steps in the computation are similar to these in the proof of \Cref{thm:robust} in \Cref{sec:prf-thm-robust}, and we may skip them for simplicity.
We consider an $\epsilon$-Fr\'echet stationary point $(u_1^*, \bm{w}_1^*, \dots, u_H^*,\bm{w}_H^*)$ with
$
\left\| (u_1^*, \bm{w}_1^*, \dots, u_H^*,\bm{w}_H^*) \right\| \leq B.
$
By \Cref{thm:reluMain-CR-frechet}, there exists a regular subgradient $\bm{g}^* \in \widehat{\partial} L(u_1^*, \bm{w}_1^*, \dots, u_H^*,\bm{w}_H^*)$ such that
\[
\|\bm{g}^*\eqqcolon(g_1^*, \bm{g}_1^*, \dots, g_H^*, \bm{g}_H^*)\| = \dist\Big( \bm{0}, \widehat{\partial} L(u_1^*, \bm{w}_1^*, \dots, u_H^*,\bm{w}_H^*) \Big) \leq \epsilon.
\]
Given a positive radius $\delta \in (0, C_\tau^{\textnormal{Fr\'echet}}]$, we aim to show that, for any
\[
(u_1, \bm{w}_1, \dots, u_H,\bm{w}_H) \in \mathbb{B}_{\delta}\Big( (u_1^*, \bm{w}_1^*, \dots, u_H^*,\bm{w}_H^*) \Big),
\]
we can certify the rounded point returned by \Cref{alg:rounding-f} satisfying 
\[
\dist\Big( \bm{0}, \widehat{\partial} L(\widehat{u}_1, \widehat{\bm{w}}_1, \dots, \widehat{u}_H,\widehat{\bm{w}}_H) \Big) \leq \epsilon + C_{\mu}^{\textnormal{Fr\'echet}}\cdot\delta,
\]
where $C_\mu^{\textnormal{Fr\'echet}}<+\infty$ is a constant depending on the curvature that we will discuss later. 

Similar to \Cref{sec:prf-thm-robust}, we define the following shorthands for convenience
\begin{alignat*}{2}
	\rho_i^* &\coloneqq \ell_i'\left( \sum_{k=1}^H u_k^*\cdot \max\left\{ (\bm{w}_k^*)^\top \bm{x}_i, 0 \right\} \right),\qquad &&\forall i \in[N], \\
	\widehat{\rho}_i &\coloneqq \ell_i'\left( \sum_{k=1}^H \widehat{u}_k\cdot \max\left\{ \widehat{\bm{w}}_k^\top \bm{x}_i, 0 \right\} \right),\qquad &&\forall i \in[N].
\end{alignat*}
We consider the following quantity related to the point $\bm{w}_k^*$ for any $k\in[H]$:
\[
\tau_k \coloneqq \min\left\{ \quad \min_{i :\bm{x}_i^\top\bm{w}_k^* > 0} \bm{x}_i^\top\bm{w}_k^*, \quad -\max_{i :\bm{x}_i^\top\bm{w}_k^* < 0} \bm{x}_i^\top\bm{w}_k^*\quad  \right\}.
\]
We use the same indices sets $\mathcal{J}_k^<,\mathcal{J}_k^=,\mathcal{J}_k^>$ for computing the rounded $\{\widehat{\bm{w}}_k\}_k$ as those in \Cref{sec:prf-thm-robust}. Note that $0 < \delta \leq C_\tau^{\textnormal{Fr\'echet}} \leq  \frac{\tau_k}{4R}$. The argument in \Cref{sec:prf-thm-robust} shows that 
\[
\left\{i:\bm{x}_i^\top\bm{w}_k^*<0\right\}=\mathcal{J}_k^<, \quad \left\{i:\bm{x}_i^\top\bm{w}_k^*=0\right\}=\mathcal{J}_k^=, \quad \left\{i:\bm{x}_i^\top\bm{w}_k^*>0\right\}=\mathcal{J}_k^>.
\]
Meanwhile, as $\widehat{\bm{w}}_k$ is feasible to the quadratic program in \Cref{alg:rounding-f}, we get
\[
\left\{i:\bm{x}_i^\top\widehat{\bm{w}}_k<0\right\}=\mathcal{J}_k^<, \quad \left\{i:\bm{x}_i^\top\widehat{\bm{w}}_k=0\right\}=\mathcal{J}_k^=, \quad \left\{i:\bm{x}_i^\top\widehat{\bm{w}}_k>0\right\}=\mathcal{J}_k^>,
\]
which implies $\mathcal{I}_k^+(\bm{w}_k^*)\cup\mathcal{I}_k^-(\bm{w}_k^*) = \mathcal{J}_k^= = \mathcal{I}_k^+(\widehat{\bm{w}}_k)\cup\mathcal{I}_k^-(\widehat{\bm{w}}_k)$ and $\mathbf{1}_{\bm{x}_i ^\top\bm{w}_k^*> 0} = \mathbf{1}_{i \in \mathcal{J}_k^> } = \mathbf{1}_{\bm{x}_i ^\top\widehat{\bm{w}}_k> 0}$ for any $k \in [H]$.
It is evident that 
\[
\|(\widehat{\bm{w}}_1,\dots,\widehat{\bm{w}}_H)  - (\bm{w}_1, \dots, \bm{w}_H) \| \leq \|(\bm{w}_1, \dots, \bm{w}_H) - (\bm{w}_1^*, \dots, \bm{w}_H^*) \|,
\]
as, for any $k \in [H]$, $\bm{w}_k^*$ is feasible to the quadratic program for computing $\widehat{\bm{w}}_k$ in \Cref{alg:rounding-f}.

However, the identification of $\bm{w}_k^*$ is not sufficient to bound $\dist\Big( \bm{g}^*_k, \widehat{\partial} \overline{L}_k(\widehat{\bm{w}}_k) \Big)$, as the index set $\mathcal{I}_k^-(\widehat{\bm{w}}_k)$ may not be an empty set, which, by \Cref{thm:reluMain-CR-frechet}, implies $G_k^F = \emptyset$ and $\dist\Big( \bm{g}^*_k, \widehat{\partial} \overline{L}_k(\widehat{\bm{w}}_k) \Big)=+\infty$.
We are thus looking for the identification of $\{u_k^*\}_{k=1}^H$. We define a constant $C_u\coloneqq L_{\ell'}(4HRB^2+1)$ and consider the following quantity related to the point $u_k^*$ for any $k\in[H]$:
\[
\tau_k' \coloneqq \min \Big\{ u_k^*\cdot \rho_i^*: i \in \mathcal{J}_k^=, u_k^*\cdot \rho_i^* > 0 \Big\}.
\]

Note that $0 < \delta \leq C_\tau^{\textnormal{Fr\'echet}} \leq \frac{\tau_k'}{4C_u}$. Fix any $k \in [H]$ and we consider two cases. If $u_k^* \neq 0$, by \Cref{thm:reluMain-CR-frechet} and \Cref{assu:frechet-nondegenerate}, we know $u_k^*\cdot \rho_i^* > 0$ for any $i \in \mathcal{J}_k^=$. Then, for any $i \in \mathcal{J}_k^=$, we have
\[
\begin{aligned}
	u_k\cdot \rho_i &= u_k^*\cdot \rho_i^* + (u_k\cdot \rho_i - u_k^*\cdot \rho_i^*) \geq u_k^*\cdot \rho_i^* - \left|u_k\cdot \rho_i - u_k^*\cdot \rho_i^*\right| \\
	&\geq \tau_k' - |u_k|\cdot |\rho_i - \rho_i^*| -  |\rho_k^*|\cdot |u_k - u_k^*| \\
	&\geq \tau_k' - C_u\cdot \delta  \geq 3C_u\cdot\delta,
\end{aligned}
\]
which by the rounding step of $\widehat{u}_k$ in \Cref{alg:rounding-f} implies if $u_k^* \neq 0$, then $\widehat{u}_k = {u}_k$  and 
\[
\widehat{u}_k\cdot \widehat{\rho}_i \geq  u_k\cdot \rho_i - | u_k|\cdot |\widehat{\rho}_i - \rho_i| \geq 3C_u\cdot \delta - C_u\cdot \delta >0.
\]
If $u_k^* = 0$, we can see that
\[
|u_k\cdot \rho_i| = |u_k\cdot \rho_i - u_k^*\cdot \rho_k^*| \leq C_u\delta,
\]
which implies $\widehat{u}_k = 0$ by rounding step of $\widehat{u}_k$ in \Cref{alg:rounding-f}. Thus, we have proved that for any $k\in[H]$ and $i \in \mathcal{J}_k^=$, we get $\widehat{u}_k\cdot \widehat{\rho}_i \geq 0$, hence that $\mathcal{I}_k^-(\widehat{\bm{w}}_k) = \mathcal{I}_k^-(\bm{w}_k^*) = \emptyset$, and finally that $\mathcal{I}_k^+(\widehat{\bm{w}}_k)=\mathcal{I}_k^+(\bm{w}_k^*)$. By \Cref{thm:reluMain-CR-frechet}, we conclude that $\widehat{\partial}L(\widehat{u}_1, \widehat{\bm{w}}_1, \dots, \widehat{u}_H, \widehat{\bm{u}}_H) \neq \emptyset$.

Summarizing, we have 
\begin{align*}
	&\Big\|(\widehat{u}_1, \widehat{\bm{w}}_1,\dots,\widehat{u}_H,\widehat{\bm{w}}_H)  - (u_1,\bm{w}_1, \dots, u_H,\bm{w}_H) \Big\|^2 \\ 
	&=  \Big\|(\widehat{\bm{w}}_1, \dots, \widehat{\bm{w}}_H) - (\bm{w}_1, \dots, \bm{w}_H) \Big\|^2+ \sum_{k:u_k^* = 0} | \widehat{u}_k - u_k |^2 \tag{${u}_k=\widehat{u}_k$ if $u_k^* \neq 0$} \\
	& \leq \Big\|(\bm{w}_1, \dots, \bm{w}_H) - (\bm{w}_1^*, \dots, \bm{w}_H^*) \Big\|^2+ \sum_{k:u_k^* = 0} | {u}_k^* - u_k |^2, \tag{by ${u}_k^*=\widehat{u}_k=0$} \\
	&\leq \Big\| (u_1,\bm{w}_1, \dots, u_H,\bm{w}_H) -  (u_1^*,\bm{w}_1^*, \dots, u_H^*,\bm{w}_H^*) \Big\|^2 \leq \delta^2.
\end{align*}
This shows by triangle inequality  that 
\[
\mathbb{B}_{2\delta}\Big( (u_1^*, \bm{w}_1^*, \dots, u_H^*,\bm{w}_H^*) \Big) \ni
(\widehat{u}_1, \widehat{\bm{w}}_1, \dots, \widehat{u}_H,\widehat{\bm{w}}_H) \in \mathbb{B}_{\delta}\Big( (u_1, \bm{w}_1, \dots, u_H,\bm{w}_H) \Big).
\] 
Using \Cref{thm:reluMain-CR-frechet}, we get
\[
\begin{aligned}
	&\dist\Big( \bm{0}, \widehat{\partial} L(\widehat{u}_1, \widehat{\bm{w}}_1, \dots, \widehat{u}_H,\widehat{\bm{w}}_H) \Big) \leq \|\bm{g}^*\| + \dist\Big( \bm{g}^*, \widehat{\partial} L(\widehat{u}_1, \widehat{\bm{w}}_1, \dots, \widehat{u}_H,\widehat{\bm{w}}_H) \Big) \\
	&\leq \epsilon + \sum_{k=1}^H  \left| g_k^*- \sum_{i=1}^N \widehat{\rho}_i \cdot \max\left\{ \widehat{\bm{w}}_k^\top \bm{x}_i, 0 \right\}  \right| + \sum_{k=1}^H\dist\Big( \bm{g}^*_k, \widehat{\partial} \overline{L}_k(\widehat{\bm{w}}_k) \Big),
\end{aligned}
\]
where we define $\overline{L}_k(\widehat{\bm{w}}_k) = \sum_{i=1}^N \widehat{u}_k\widehat{\rho}_i\cdot \max\{\bm{x}_i^\top \widehat{\bm{w}}_k,0\}$.
We first compute
\begin{align*}
\sum_{i=1}^N \Big|\widehat{\rho}_i-\rho_i^*\Big|&= \sum_{i=1}^N \left|\ell_i'\left( \sum_{k=1}^H \widehat{u}_k\cdot \max\left\{ \widehat{\bm{w}}_k^\top \bm{x}_i, 0 \right\} \right) - \ell_i'\left( \sum_{k=1}^H u_k^*\cdot \max\left\{ (\bm{w}^*_k)^\top \bm{x}_i, 0 \right\} \right) \right| \\
&\leq L_{\ell'}\cdot \sum_{i=1}^N  \sum_{k=1}^H \big( |\widehat{u}_k|\cdot \|\bm{x}_i\| \cdot \| \widehat{\bm{w}}_k - \bm{w}^*_k \| + \|\bm{w}_k^*\|\cdot \|\bm{x}_i\|\cdot |\widehat{u}_k - u_k^*| \big) \\
&\leq 4L_{\ell'}NHBR\cdot \delta \eqqcolon C_1  \cdot  \delta.
\end{align*}
We now upper bound the second term $\sum_{k=1}^H  \left| g_k^*- \sum_{i=1}^N \widehat{\rho}_i \cdot \max\left\{ \widehat{\bm{w}}_k^\top \bm{x}_i, 0 \right\}  \right|$. A computation similar to that in \Cref{sec:prf-thm-robust} shows that
	\[
\sum_{k=1}^H  \left| g_k^*- \sum_{i=1}^N \widehat{\rho}_i \cdot \max\left\{ \widehat{\bm{w}}_k^\top \bm{x}_i, 0 \right\}  \right| \leq H(C_2+C_3)\cdot \delta \eqqcolon C_4\cdot \delta.
\]
where $C_2\coloneqq BR \cdot C_1$ and $C_3\coloneqq 2L_{\ell'}NR$.

We proceed to upper bound $\sum_{k=1}^H\dist\Big( \bm{g}^*_k, \widehat{\partial} \overline{L}_k(\widehat{\bm{w}}_k) \Big)$.
By \Cref{thm:reluMain-CR-frechet}, we know that there exist $\xi_j \in [0,1], \forall j \in \mathcal{I}_k^+(\bm{w}_k^*)$ such that $\bm{g}_k^* \in \widehat{\partial} \overline{L}_k(\bm{w}_k^*)$ can be written as
\[
\bm{g}_k^*\coloneqq \sum_{i \in [N]\backslash \big(\mathcal{I}_k^+(\bm{w}_k^*)\cup\mathcal{I}_k^-(\bm{w}_k^*)\big)} u_k^*\rho_i^*\cdot \mathbf{1}_{\bm{x}_i ^\top\bm{w}_k^*> 0} \cdot\bm{x}_i + \sum_{j\in  \mathcal{I}_k^+(\bm{w}_k^*)} u_k^*\rho_j^*\cdot \bm{x}_j\cdot \xi_j.
\]
Now, we are well prepared to upper bound $\dist\big(\bm{g}_k^*, \widehat{\partial} \overline L_k(\widehat{\bm{w}}_k)\big)$. Let
\[
\widehat{\bm{g}}_k \coloneqq \sum_{i \in [N]\backslash \big(\mathcal{I}_k^+(\widehat{\bm{w}}_k)\cup\mathcal{I}_k^-(\widehat{\bm{w}}_k)\big)} \widehat{u}_k\widehat{\rho}_i\cdot \mathbf{1}_{\bm{x}_i ^\top\widehat{\bm{w}}_k> 0} \cdot\bm{x}_i
+
\sum_{j\in \mathcal{I}_k^+(\widehat{\bm{w}}_k)} \widehat{u}_k\widehat{\rho}_j\cdot \bm{x}_j\cdot \xi_j,
\]
which, by \Cref{thm:reluMain-CR-frechet}, belongs to the Fr\'echet subdifferential $\widehat{\partial} \overline L_k(\widehat{\bm{w}}_k)$. We proceed to upper bound
$
\dist\big(\bm{g}_k^*, \widehat{\partial} \overline L_k(\widehat{\bm{w}}_k)\big) \leq  \|\widehat{\bm{g}}_k - \bm{g}_k^*\|
$
with
\begin{align*}
&\|\widehat{\bm{g}}_k - \bm{g}_k^*\| \\
&= \left\| 
\sum_{i \in [N]\backslash \big(\mathcal{I}_k^+(\widehat{\bm{w}}_k)\cup\mathcal{I}_k^-(\widehat{\bm{w}}_k)\big)}  \Big(\widehat{u}_k\widehat{\rho}_i-u_k^*\rho_i^*\Big)\cdot \mathbf{1}_{\bm{x}_i ^\top\widehat{\bm{w}}_k> 0} \cdot\bm{x}_i
+
\sum_{j\in \mathcal{I}_k^+(\widehat{\bm{w}}_k)} \Big(\widehat{u}_k\widehat{\rho}_j-u_k^*\rho_j^*\Big)\cdot \bm{x}_j\cdot \xi_j
\right\| \\
&\leq R \cdot \sum_{1\leq i \leq N} \left(|\widehat{u}_k|\cdot\Big|\widehat{\rho}_i-\rho_i^*\Big| + |\rho_i^*|\cdot \Big|\widehat{u}_k-u_k^*\Big| \right) \\
&\leq  BR\cdot C_1\cdot \delta + 2NRL_{\ell'}\cdot \delta.
\end{align*}
Then, we have
\[
\sum_{k=1}^H \dist\big(\bm{g}_k^*, \widehat{\partial} \overline L_k(\widehat{\bm{w}}_k)\big) \leq  \sum_{k=1}^H  \|\widehat{\bm{g}}_k - \bm{g}_k^*\| \leq H(BR\cdot C_1 + 2NRL_{\ell'})\cdot \delta \eqqcolon  C_5 \cdot\delta.
\]
In sum, we have proved that
\[
\dist\Big( \bm{0}, \widehat{\partial} L(\widehat{u}_1, \widehat{\bm{w}}_1, \dots, \widehat{u}_H,\widehat{\bm{w}}_H) \Big) \leq \epsilon + C_\mu^{\textnormal{Fr\'echet}}\cdot\delta,
\]
where $C_\mu^{\textnormal{Fr\'echet}}\coloneqq C_4 + C_5 = \textnormal{poly}(B,R,L_\ell, L_{\ell'}, N, H)$.
\end{proof}

\bibliography{ref}

\begin{thebibliography}{43}
\providecommand{\natexlab}[1]{#1}
\providecommand{\url}[1]{\texttt{#1}}
\expandafter\ifx\csname urlstyle\endcsname\relax
  \providecommand{\doi}[1]{doi: #1}\else
  \providecommand{\doi}{doi: \begingroup \urlstyle{rm}\Url}\fi

\bibitem[Ahmadi and Zhang(2022)]{ahmadi2022complexity}
A.~A. Ahmadi and J.~Zhang.
\newblock On the complexity of finding a local minimizer of a quadratic
  function over a polytope.
\newblock \emph{Mathematical Programming}, 195\penalty0 (1-2):\penalty0
  783--792, 2022.

\bibitem[Arora et~al.(2019)Arora, Du, Hu, Li, and Wang]{arora2019fine}
S.~Arora, S.~Du, W.~Hu, Z.~Li, and R.~Wang.
\newblock Fine-grained analysis of optimization and generalization for
  overparameterized two-layer neural networks.
\newblock In \emph{International Conference on Machine Learning}, pages
  322--332. PMLR, 2019.

\bibitem[Bertsimas and Tsitsiklis(1997)]{bertsimas1997introduction}
D.~Bertsimas and J.~N. Tsitsiklis.
\newblock \emph{Introduction to Linear Optimization}, volume~6.
\newblock Athena Scientific Belmont, MA, 1997.

\bibitem[Bubeck et~al.(2020)Bubeck, Eldan, Lee, and
  Mikulincer]{bubeck2020network}
S.~Bubeck, R.~Eldan, Y.~T. Lee, and D.~Mikulincer.
\newblock Network size and size of the weights in memorization with two-layers
  neural networks.
\newblock In \emph{Advances in Neural Information Processing Systems},
  volume~33, pages 4977--4986, 2020.

\bibitem[Burke et~al.(2002)Burke, Lewis, and Overton]{burke2002approximating}
J.~V. Burke, A.~S. Lewis, and M.~L. Overton.
\newblock Approximating subdifferentials by random sampling of gradients.
\newblock \emph{Mathematics of Operations Research}, 27\penalty0 (3):\penalty0
  567--584, 2002.

\bibitem[Clarke(1990)]{clarke1990optimization}
F.~H. Clarke.
\newblock \emph{Optimization and Nonsmooth Analysis}.
\newblock SIAM, 1990.

\bibitem[Cui and Pang(2021)]{cui2021modern}
Y.~Cui and J.-S. Pang.
\newblock \emph{Modern Nonconvex Nondifferentiable Optimization}.
\newblock SIAM, 2021.

\bibitem[Davis and Drusvyatskiy(2019)]{davis2019stochastic}
D.~Davis and D.~Drusvyatskiy.
\newblock Stochastic model-based minimization of weakly convex functions.
\newblock \emph{SIAM Journal on Optimization}, 29\penalty0 (1):\penalty0
  207--239, 2019.

\bibitem[Davis et~al.(2020)Davis, Drusvyatskiy, Kakade, and
  Lee]{davis2020stochastic}
D.~Davis, D.~Drusvyatskiy, S.~Kakade, and J.~D. Lee.
\newblock Stochastic subgradient method converges on tame functions.
\newblock \emph{Foundations of Computational Mathematics}, 20\penalty0
  (1):\penalty0 119--154, 2020.

\bibitem[Davis et~al.(2022)Davis, Drusvyatskiy, Lee, Padmanabhan, and
  Ye]{davis2021gradient}
D.~Davis, D.~Drusvyatskiy, Y.~T. Lee, S.~Padmanabhan, and G.~Ye.
\newblock A gradient sampling method with complexity guarantees for {L}ipschitz
  functions in high and low dimensions.
\newblock In \emph{Advances in Neural Information Processing Systems}, 2022.

\bibitem[Garey and Johnson(1979)]{garey1979computers}
M.~R. Garey and D.~S. Johnson.
\newblock \emph{Computers and Intractability}, volume 174.
\newblock 1979.

\bibitem[Griewank(2013)]{griewank2013stable}
A.~Griewank.
\newblock On stable piecewise linearization and generalized algorithmic
  differentiation.
\newblock \emph{Optimization Methods and Software}, 28\penalty0 (6):\penalty0
  1139--1178, 2013.

\bibitem[Griewank and Walther(2008)]{griewank2008evaluating}
A.~Griewank and A.~Walther.
\newblock \emph{Evaluating Derivatives: Principles and Techniques of
  Algorithmic Differentiation}.
\newblock SIAM, 2008.

\bibitem[Griewank and Walther(2016)]{griewank2016first}
A.~Griewank and A.~Walther.
\newblock First-and second-order optimality conditions for piecewise smooth
  objective functions.
\newblock \emph{Optimization Methods and Software}, 31\penalty0 (5):\penalty0
  904--930, 2016.

\bibitem[Griewank and Walther(2019)]{griewank2019relaxing}
A.~Griewank and A.~Walther.
\newblock Relaxing kink qualifications and proving convergence rates in
  piecewise smooth optimization.
\newblock \emph{SIAM Journal on Optimization}, 29\penalty0 (1):\penalty0
  262--289, 2019.

\bibitem[Hiriart-Urruty and Lemar{\'e}chal(2004)]{hiriart2004fundamentals}
J.-B. Hiriart-Urruty and C.~Lemar{\'e}chal.
\newblock \emph{Fundamentals of Convex Analysis}.
\newblock Springer Science \& Business Media, 2004.

\bibitem[Huang and Ma(2010)]{ma2010computation}
Z.-D. Huang and G.-C. Ma.
\newblock On the computation of an element of {C}larke generalized {J}acobian
  for a vector-valued max function.
\newblock \emph{Nonlinear Analysis: Theory, Methods \& Applications},
  72\penalty0 (2):\penalty0 998--1009, 2010.

\bibitem[Jordan et~al.(2022)Jordan, Lin, and Zampetakis]{jordan2022complexity}
M.~I. Jordan, T.~Lin, and M.~Zampetakis.
\newblock On the complexity of deterministic nonsmooth and nonconvex
  optimization.
\newblock \emph{arXiv preprint arXiv:2209.12463}, 2022.

\bibitem[Khan and Barton(2013)]{khan2013evaluating}
K.~A. Khan and P.~I. Barton.
\newblock Evaluating an element of the {C}larke generalized {J}acobian of a
  composite piecewise differentiable function.
\newblock \emph{ACM Transactions on Mathematical Software}, 39\penalty0
  (4):\penalty0 1--28, 2013.

\bibitem[Kong and Lewis(2022)]{kong2022cost}
S.~Kong and A.~S. Lewis.
\newblock The cost of nonconvexity in deterministic nonsmooth optimization.
\newblock \emph{arXiv preprint arXiv:2210.00652}, 2022.

\bibitem[Kornowski and Shamir(2022{\natexlab{a}})]{kornowski2021oracle}
G.~Kornowski and O.~Shamir.
\newblock Oracle complexity in nonsmooth nonconvex optimization.
\newblock \emph{Journal of Machine Learning Research}, 23\penalty0
  (314):\penalty0 1--44, 2022{\natexlab{a}}.

\bibitem[Kornowski and Shamir(2022{\natexlab{b}})]{kornowski2022complexity}
G.~Kornowski and O.~Shamir.
\newblock On the complexity of finding small subgradients in nonsmooth
  optimization.
\newblock \emph{arXiv preprint arXiv:2209.10346}, 2022{\natexlab{b}}.

\bibitem[Lemar{\'e}chal et~al.(2000)Lemar{\'e}chal, Oustry, and
  Sagastiz{\'a}bal]{lemarechal2000u}
C.~Lemar{\'e}chal, F.~Oustry, and C.~Sagastiz{\'a}bal.
\newblock The $\mathcal{U}$-{L}agrangian of a convex function.
\newblock \emph{Transactions of the American Mathematical Society},
  352\penalty0 (2):\penalty0 711--729, 2000.

\bibitem[Lewis(2002)]{lewis2002active}
A.~S. Lewis.
\newblock Active sets, nonsmoothness, and sensitivity.
\newblock \emph{SIAM Journal on Optimization}, 13\penalty0 (3):\penalty0
  702--725, 2002.

\bibitem[Li et~al.(2020)Li, So, and Ma]{li2020understanding}
J.~Li, A.~M.-C. So, and W.-K. Ma.
\newblock Understanding notions of stationarity in nonsmooth optimization: A
  guided tour of various constructions of subdifferential for nonsmooth
  functions.
\newblock \emph{IEEE Signal Processing Magazine}, 37\penalty0 (5):\penalty0
  18--31, 2020.

\bibitem[Lin et~al.(2022)Lin, Zheng, and Jordan]{lin2022gradient}
T.~Lin, Z.~Zheng, and M.~I. Jordan.
\newblock Gradient-free methods for deterministic and stochastic nonsmooth
  nonconvex optimization.
\newblock In \emph{Advances in Neural Information Processing Systems}, 2022.

\bibitem[Metel and Takeda(2022)]{metel2022perturbed}
M.~R. Metel and A.~Takeda.
\newblock Perturbed iterate {SGD} for {L}ipschitz continuous loss functions.
\newblock \emph{Journal of Optimization Theory and Applications}, pages 1--44,
  2022.

\bibitem[Montufar et~al.(2014)Montufar, Pascanu, Cho, and
  Bengio]{montufar2014number}
G.~F. Montufar, R.~Pascanu, K.~Cho, and Y.~Bengio.
\newblock On the number of linear regions of deep neural networks.
\newblock In \emph{Advances in Neural Information Processing Systems},
  volume~27, pages 2924--2932, 2014.

\bibitem[Mordukhovich and Shao(1996)]{mordukhovich1996nonsmooth}
B.~Mordukhovich and Y.~Shao.
\newblock Nonsmooth sequential analysis in {A}splund spaces.
\newblock \emph{Transactions of the American Mathematical Society},
  348\penalty0 (4):\penalty0 1235--1280, 1996.

\bibitem[Murty and Kabadi(1987)]{murty1987some}
K.~G. Murty and S.~N. Kabadi.
\newblock Some {NP}-complete problems in quadratic and nonlinear programming.
\newblock \emph{Mathematical Programming}, 39\penalty0 (2):\penalty0 117--129,
  1987.

\bibitem[Nemirovskij and Yudin(1983)]{nemirovskij1983problem}
A.~S. Nemirovskij and D.~B. Yudin.
\newblock \emph{Problem Complexity and Method Efficiency in Optimization}.
\newblock Wiley-Interscience, 1983.

\bibitem[Nesterov(2003)]{nesterov2003introductory}
Y.~Nesterov.
\newblock \emph{Introductory Lectures on Convex Optimization: A Basic Course},
  volume~87.
\newblock Springer Science \& Business Media, 2003.

\bibitem[Nesterov(2005)]{nesterov2005lexicographic}
Y.~Nesterov.
\newblock Lexicographic differentiation of nonsmooth functions.
\newblock \emph{Mathematical Programming}, 104:\penalty0 669--700, 2005.

\bibitem[Pardalos and Vavasis(1992)]{pardalos1992open}
P.~M. Pardalos and S.~A. Vavasis.
\newblock Open questions in complexity theory for numerical optimization.
\newblock \emph{Mathematical Programming}, 57\penalty0 (1-3):\penalty0
  337--339, 1992.

\bibitem[Rockafellar(1985)]{rockafellar1985extensions}
R.~T. Rockafellar.
\newblock Extensions of subgradient calculus with applications to optimization.
\newblock \emph{Nonlinear Analysis: Theory, Methods \& Applications},
  9\penalty0 (7):\penalty0 665--698, 1985.

\bibitem[Rockafellar and Wets(2009)]{rockafellar2009variational}
R.~T. Rockafellar and R.~J.-B. Wets.
\newblock \emph{Variational Analysis}, volume 317.
\newblock Springer Science \& Business Media, 2009.

\bibitem[Safran et~al.(2022)Safran, Vardi, and Lee]{safran2022effective}
I.~Safran, G.~Vardi, and J.~D. Lee.
\newblock On the effective number of linear regions in shallow univariate
  {ReLU} networks: Convergence guarantees and implicit bias.
\newblock In \emph{Advances in Neural Information Processing Systems}, 2022.

\bibitem[Scholtes(2012)]{scholtes2012introduction}
S.~Scholtes.
\newblock \emph{Introduction to Piecewise Differentiable Equations}.
\newblock Springer Science \& Business Media, 2012.

\bibitem[Tian and So(2022)]{tian2022no}
L.~Tian and A.~M.-C. So.
\newblock No dimension-free deterministic algorithm computes approximate
  stationarities of {L}ipschitzians.
\newblock \emph{arXiv preprint arXiv:2210.06907}, 2022.

\bibitem[Tian et~al.(2022)Tian, Zhou, and So]{tian2022finite}
L.~Tian, K.~Zhou, and A.~M.-C. So.
\newblock On the finite-time complexity and practical computation of
  approximate stationarity concepts of {L}ipschitz functions.
\newblock In \emph{International Conference on Machine Learning}, pages
  21360--21379. PMLR, 2022.

\bibitem[Wang et~al.(2019)Wang, Giannakis, and Chen]{wang2019learning}
G.~Wang, G.~B. Giannakis, and J.~Chen.
\newblock Learning {ReLU} networks on linearly separable data: Algorithm,
  optimality, and generalization.
\newblock \emph{IEEE Transactions on Signal Processing}, 67\penalty0
  (9):\penalty0 2357--2370, 2019.

\bibitem[Yun et~al.(2018)Yun, Sra, and Jadbabaie]{yun2018efficiently}
C.~Yun, S.~Sra, and A.~Jadbabaie.
\newblock Efficiently testing local optimality and escaping saddles for {ReLU}
  networks.
\newblock In \emph{International Conference on Learning Representations}, 2018.

\bibitem[Zhang et~al.(2020)Zhang, Lin, Jegelka, Jadbabaie, and
  Sra]{zhang2020complexity}
J.~Zhang, H.~Lin, S.~Jegelka, A.~Jadbabaie, and S.~Sra.
\newblock Complexity of finding stationary points of nonsmooth nonconvex
  functions.
\newblock In \emph{International Conference on Machine Learning}, pages
  11173--11182, 2020.

\end{thebibliography}
\bibliographystyle{abbrvnat}

\newpage

\end{document}